\theoremstyle{plain}
\newtheorem{thm}{Theorem}[section]
\newtheorem{prop}[thm]{Proposition}
\newtheorem{lem}[thm]{Lemma}
\newtheorem{con}[thm]{Conjecture}
\newtheorem{corl}[thm]{Corollary}
\theoremstyle{definition}
\newtheorem{defn}[thm]{Definition}
\newtheorem{nota}[thm]{Notation}
\theoremstyle{plain}
\newtheorem{thms}{Theorem}[subsection]
\newtheorem{props}[thms]{Proposition}
\newtheorem{lems}[thms]{Lemma}
\newtheorem{corls}[thms]{Corollary}
\theoremstyle{definition}
\begin{document} 

\title[On Singer's conjecture for the fifth algebraic transfer]{On Singer's conjecture for the fifth\\ algebraic transfer} 

\author{Nguy\~\ecircumflex n Kh\'{\u a}c T\'in}

\footnotetext[1]{2000 {\it Mathematics Subject Classification}. Primary 55S10; 55S05, 55T15.}
\footnotetext[2]{{\it Keywords and phrases:} Steenrod squares, hit problem, algebraic transfer.}
\maketitle

\begin{abstract} Let $P_k:= \mathbb{F}_2[x_1,x_2,\ldots ,x_k]$  be the polynomial algebra in $k$ variables with the degree of each $x_i$ being $1,$ regarded as a module over the mod-$2$ Steenrod algebra $\mathcal{A},$ and let $GL_k$ be the general linear group over the prime field $\mathbb{F}_2$ which acts naturally on $P_k$. We study the {\it hit problem}, set up by Frank Peterson, of finding a minimal set of generators for the polynomial algebra $P_k$ as a module over the  mod-2 Steenrod algebra, $\mathcal{A}$. These results are used to study the Singer algebraic transfer which is a homomorphism from the homology of  the mod-$2$ Steenrod algebra, $\mbox{Tor}^{\mathcal{A}}_{k, k+n}(\mathbb{F}_2, \mathbb{F}_2),$ to the subspace of  $\mathbb{F}_2\otimes_{\mathcal{A}}P_k$ consisting of all the $GL_k$-invariant classes of degree $n.$

In this paper, we explicitly compute the hit problem for $k = 5$ and the degree $7.2^s-5$ with $s$ an arbitrary positive integer. Using this result, we show that Singer's conjecture for the algebraic transfer is true in the case $k=5$ and the above degree.
\end{abstract}

\medskip
\section{Introduction}\label{s1} 
\setcounter{equation}{0}

Let $E^k$ be an elementary abelian 2-group of rank $k$. Then, 
$$P_k:= H^*(E^k) \cong \mathbb F_2[x_1,x_2,\ldots ,x_k],$$ 
a polynomial algebra in  $k$ variables $x_1, x_2, \ldots , x_k$, each of degree 1. Here the cohomology is taken with coefficients in the prime field $\mathbb F_2$ of two elements. 

Being the cohomology of a group,  $P_k$ is a module over the mod-2 Steenrod algebra, $\mathcal A$.  The action of $\mathcal A$ on $P_k$ is determined by the elementary properties of the Steenrod squares $Sq^i$ and the Cartan formula (see Steenrod and Epstein~\cite{st}).

A polynomial $f$ in $P_k$  is called {\it hit} if it can be written as a finite sum $f = \sum_{u\geqslant 0}Sq^{2^u}(h_u)$ 
for suitable polynomials $h_u$.  That means $f$ belongs to  $\mathcal{A}^+P_k$,  where $\mathcal{A}^+$ denotes the augmentation ideal in $\mathcal{A}$.

Let $GL_k$ be the general linear group over the field $\mathbb F_2$. This group acts naturally on $P_k$ by matrix substitution. Since the two actions of $\mathcal A$ and $GL_k$ upon $P_k$ commute with each other, there is an action of $GL_k$ on $QP_k := \mathbb F_2\otimes_{\mathcal A}P_k$. 

Many authors  study the {\it hit problem} of determination of a minimal set  of generators for $P_k$ as a module over the Steenrod algebra, or equivalently, a basis of $QP_k$.   This problem has first been studied by Peterson \cite{pe}, Wood \cite{wo}, Singer \cite {si1}, Priddy \cite{pr},  who show its relationship to several classical problems in homotopy theory.

The vector space $QP_k$ was explicitly calculated by 
Peterson~\cite{pe} for $k=1, 2,$ by Kameko~\cite{ka} for $k=3$ and by Sum \cite{su3,su1} for $k = 4$. However, for $k > 4$, it is still open.

For a nonnegative integer $d$, denote by $(P_k)_d$ the subspace of $P_k$ consisting of all the homogeneous polynomials of degree $d$ in $P_k$ and by $(QP_k)_d$ the subspace of $QP_k$ consisting of all the classes represented by the elements in $(P_k)_d$. 
In \cite{si1}, Singer defined the algebraic transfer,  which is a homomorphism
$$\varphi_k :\text{Tor}^{\mathcal A}_{k,k+d} (\mathbb F_2,\mathbb F_2) \longrightarrow  (QP_k)_d^{GL_k}$$
from the homology of the Steenrod algebra to the subspace of $(QP_k)_d$ consisting of all the $GL_k$-invariant classes. It is a useful tool in describing the homology groups of the Steenrod algebra, $\text{Tor}^{\mathcal A}_{k,k+d} (\mathbb F_2,\mathbb F_2)$. The hit problem and the algebraic transfer was studied by many authors (see Boardman ~\cite{bo}, Bruner-H\`a-H\uhorn ng ~\cite{br}, Janfada \cite{jw1}, H\`a ~\cite{ha}, H\uhorn ng ~\cite{hu2, hu3}, Ch\ohorn n-H\`a~ \cite{cha1,cha2}, Minami ~\cite{mi}, Nam~ \cite{na2}, H\uhorn ng-Qu\`ynh~ \cite{hq}, Qu\`ynh~ \cite{qh}, Sum-T\'in~\cite{s-t15}, Sum~\cite{su5}, T\'in-Sum~\cite{t-s16} and others).

Singer showed in \cite{si1} that $\varphi_k$ is an isomorphism for $k=1,2$. Boardman showed in \cite{bo} that $\varphi_3$ is also an isomorphism. However, for any $k\geqslant 4$,  $\varphi_k$ is not a monomorphism in infinitely many degrees (see Singer \cite{si1}, H\uhorn ng \cite{hu3}.) Singer made the following conjecture.
\begin{con}[Singer \cite{si1}] The algebraic transfer $\varphi_k$ is an epimorphism for any $k \geqslant 0$.
\end{con}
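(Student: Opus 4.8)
A proof of the full conjecture is not available; the aim here, and the content of this paper, is to verify it for $k=5$ in the infinite family of degrees $n_s:=7\cdot 2^s-5$, $s\geqslant 1$. The strategy has three parts: solve the hit problem in degree $n_s$, extract the $GL_5$-invariants, and show they lie in the image of $\varphi_5$.

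\textbf{Step 1: reduction of the hit problem.} Since $n_s=2n_{s-1}+5$, Kameko's squaring operation gives an epimorphism $\widetilde{Sq^0_*}\colon (QP_5)_{n_s}\twoheadrightarrow (QP_5)_{n_{s-1}}$, and it is an isomorphism whenever $\mu(n_s)=5$, where $\mu(n)$ denotes the least number of integers of the form $2^t-1$ summing to $n$. A check of binary expansions shows $\mu(n_s)=5$ for all $s\geqslant 3$, while $\mu(n_1)=\mu(n_2)=3$; hence $(QP_5)_{n_s}\cong (QP_5)_{23}$ for every $s\geqslant 2$, and the hit problem is reduced to the two base degrees $9$ and $23$ (and, if convenient, $51$, which is already isomorphic to $23$).

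\textbf{Step 2: the base degrees.} For $d\in\{9,23\}$ I would first apply the standard reductions of Peterson, Wood, Kameko and Singer to restrict attention to the potentially indecomposable (admissible) monomials, then split $(QP_5)_d$ into the part spanned by classes of monomials that omit at least one variable — which is read off from Sum's solution of the rank-$4$ hit problem via the five canonical embeddings $P_4\hookrightarrow P_5$ — and the part spanned by monomials in all five variables. The latter is handled by stratifying its admissible monomials according to weight vectors and spike type, discarding the hit ones, and verifying directly that the remaining classes are linearly independent in $QP_5$. This step is the crux of the argument: for $P_5$ the analysis involves classifying and testing a large set of monomials, and certifying both that every excluded monomial is hit and that the proposed basis is independent is a long, essentially computational task; once it is done one has an explicit basis and the dimension of $(QP_5)_{n_s}$ (one value for $s=1$, a single stable value for $s\geqslant 2$).

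\textbf{Step 3: invariants and the transfer.} With an explicit basis in hand, compute $(QP_5)_{n_s}^{GL_5}$ by first imposing invariance under the transpositions generating $\Sigma_5$ — which sharply shrinks the candidate set — then under one further generator of $GL_5$, and solving the resulting $\mathbb F_2$-linear system; one expects a space of small dimension, with explicit generators (or $0$). To prove $\varphi_5$ surjects onto it, use that $\varphi_5$ is compatible with Kameko's $Sq^0$ on $\text{Tor}^{\mathcal A}_{5,\,*}(\mathbb F_2,\mathbb F_2)$ and $\widetilde{Sq^0_*}$ on $QP_5$: together with Step 1 this propagates surjectivity from the base degrees upward, so it suffices, for $d\in\{9,23\}$, to exhibit classes in $\text{Tor}^{\mathcal A}_{5,\,5+d}(\mathbb F_2,\mathbb F_2)$ — available from the known computations of the cohomology of $\mathcal A$ in these stems — and to evaluate $\varphi_5$ on them through its chain-level definition, matching the invariants found above; there is nothing to check whenever the invariant space vanishes. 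This establishes Singer's conjecture for $k=5$ in every degree $n_s$.
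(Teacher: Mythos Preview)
Your proposal is correct and follows essentially the same route as the paper: reduce via iterated Kameko to the base degrees $9$ and $23$, compute admissible bases there by weight-vector stratification together with the known $P_4$ results, and then determine the $GL_5$-invariants. The paper's computation in fact gives $(QP_5)_{n_s}^{GL_5}=0$ for every $s\geqslant 1$, so your Step~3 collapses entirely---surjectivity of $\varphi_5$ is automatic and no evaluation of the transfer on explicit $\mathrm{Tor}$ classes is ever needed.
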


 The conjecture is true for $k\leqslant 3$. Based on the results in \cite{su3,su1}, it can be verified for $k=4$. 

The purpose of the paper is to verify this conjecture for $k=5$ and the degree $7.2^s - 5$. The following is the main result of the paper.

\begin{thm}\label{mthm} Singer's conjecture is true for $k=5$ and the degree $7.2^{s} - 5$ with $s$ an arbitrary positive integer.
\end{thm}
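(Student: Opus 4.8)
The proof divides into two essentially independent tasks: the explicit determination of $(QP_5)_n$ for $n=7\cdot 2^{s}-5$, and the computation of the $GL_5$-invariants of $(QP_5)_n$ together with their comparison with $\mathrm{Tor}^{\mathcal A}_{5,5+n}(\mathbb F_2,\mathbb F_2)$. The first step is to reduce to finitely many degrees. Let $\widetilde{Sq}^{0}_{*}\colon (QP_5)_{2m+5}\to (QP_5)_m$ denote the (dual) Kameko squaring operation and $\mu(m)$ the least number of integers of the form $2^{d}-1$ with sum $m$. One checks that $\mu\big(7\cdot 2^{s+1}-5\big)=5$ for every $s\geqslant 2$, so by Kameko's theorem \cite{ka} the map $\widetilde{Sq}^{0}_{*}$ is an isomorphism between the relevant degrees; iterating, $(QP_5)_{7\cdot 2^{s}-5}\cong (QP_5)_{23}$ for all $s\geqslant 2$, while $s=1$ leaves the single degree $n=9$ (here $\mu(23)=3\neq 5$, so no further reduction is available). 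Thus it suffices to solve the hit problem in degrees $9$ and $23$ and to track the interaction of the transfer with the squaring operations.

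For the hit problem I would follow Sum's treatment of $QP_4$ in \cite{su3,su1}. A spanning set of $(P_5)_n$ modulo $\mathcal A^{+}P_5$ is obtained from the admissible monomials, trimmed using Peterson's criterion (Wood's theorem \cite{wo}), the Kameko \cite{ka} and Singer \cite{si1} criteria on hit monomials, and the results on strictly inadmissible monomials \cite{su1}; monomials omitting at least one variable are handled through the inclusions $P_j\hookrightarrow P_5$, $j\leqslant 4$, using the already known bases of $QP_j$. The genuinely five-variable monomials are organised by weight vector, and within each weight class one lists the admissible monomials explicitly and proves their linear independence in $(QP_5)_n$ by a detailed analysis of the $\mathcal A$-action. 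Writing $(QP_5)_n=\operatorname{Im}\widetilde{Sq}^{0}_{*}\oplus\ker\widetilde{Sq}^{0}_{*}$, with the image governed by the lower degree $(n-5)/2$, keeps the bookkeeping manageable; this is the computational core of the paper.

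With an explicit monomial basis of $(QP_5)_n$ in hand, the invariants are extracted in the usual two stages: first solve the linear system expressing $\Sigma_5$-invariance (it suffices to use the transpositions) to obtain $(QP_5)_n^{\Sigma_5}$, then impose the remaining generator of $GL_5$ to obtain $(QP_5)_n^{GL_5}$. In parallel one records $\mathrm{Tor}^{\mathcal A}_{5,5+n}(\mathbb F_2,\mathbb F_2)$ in degrees $9$ and $23$ from the known structure of the cohomology of the Steenrod algebra (equivalently, via the $\Lambda$-algebra or the May spectral sequence). Finally one shows that $\varphi_5$ is onto $(QP_5)_n^{GL_5}$: in the two base degrees this is done by exhibiting each invariant as the transfer of an explicit homology class (typically a product of classes such as $h_i$ or $c_i$, using the values of $\varphi_k$ known in low rank); and for $s\geqslant 2$ it follows from the commutative square relating $\varphi_5$, the Kameko operation $\widetilde{Sq}^{0}$ on invariants and the squaring operation $Sq^{0}$ on $\mathrm{Tor}^{\mathcal A}_{5,*}$, since $\widetilde{Sq}^{0}$ is an isomorphism in the degrees at hand. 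If the invariant space happens to vanish in some degree, the conjecture is automatic there.

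The main obstacle is the hit-problem computation in degree $23$ (and hence, via Kameko, throughout the family): proving that the proposed list of admissible monomials is linearly independent modulo $\mathcal A^{+}P_5$ requires controlling a large number of monomials and a delicate case analysis of the Steenrod squares, and this is where almost all the work lies. A secondary difficulty is to pin down $\mathrm{Tor}^{\mathcal A}_{5,5+n}$ precisely and to realise each $GL_5$-invariant as a genuine transfer image rather than merely matching dimensions; the compatibility of $\varphi_5$ with the Kameko operation is the device that makes the passage from degree $23$ to the general degree routine.
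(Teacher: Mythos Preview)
Your overall strategy matches the paper's: reduce via Kameko to degrees $9$ and $23$, compute admissible monomial bases weight-vector by weight-vector (splitting off $QP_5^0$ and using the known $QP_4$), and then extract $GL_5$-invariants in two stages ($\Sigma_5$ first, then the remaining generator). The paper uses Sum's criterion (Theorem~\ref{cthm}) rather than checking $\mu$ directly, but this gives the same reduction.

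The one place your outline diverges from what actually happens is the endgame. You propose to ``exhibit each invariant as the transfer of an explicit homology class'' and flag this as a secondary difficulty. In fact the paper shows $(QP_5)_{9}^{GL_5}=0$ and $(QP_5)_{23}^{GL_5}=0$, so Singer's conjecture holds vacuously and no matching with $\mathrm{Tor}$ is needed. Your fallback clause (``If the invariant space happens to vanish\ldots'') is precisely what occurs. Note that your primary plan could not succeed here even in principle: $\mathrm{Tor}^{\mathcal A}_{5,7\cdot 2^s}(\mathbb F_2,\mathbb F_2)$ is one-dimensional (generated by $(Ph_1)^*$ for $s=1$ and $(h_sg_{s-1})^*$ for $s\geqslant 2$), while the target is zero, so $\varphi_5$ is onto but not one-to-one in these degrees---this is how the paper recovers H\uhorn ng's result that $\varphi_5$ fails to be a monomorphism infinitely often. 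So your ``secondary difficulty'' dissolves, and the commutative-square argument transporting surjectivity along $\widetilde{Sq}^0$ is also unnecessary: once the invariants vanish at $s=2$, the $GL_5$-equivariance of the Kameko isomorphism gives vanishing for all $s\geqslant 2$ directly.
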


To prove the theorem, we study the hit problem for $k = 5$ and the degree $7.2^s-5$. We have

\begin{thm}\label{pthm2} Let $m=7.2^s-5$ with $s$ a positive integer. Then

\medskip
{\rm i)} $\dim(QP_5)_{m} = 191$ for $s=1$,  and $\dim(QP_5)_{m} = 1245$ for any $s \geqslant 2$.

{\rm ii)} $(QP_5)_{m}^{GL_5} = 0$ for any $s \geqslant 1$.
\end{thm}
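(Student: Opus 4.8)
The plan is to split the argument into the two assertions of Theorem~\ref{pthm2} and to treat the $s=1$ case separately from $s\geqslant 2$, since the dimensions differ. The heart of the matter is the explicit computation of a monomial basis for $(QP_5)_m$ in the degree $m = 7\cdot 2^s - 5$. I would first reduce to a finite problem using the Kameko squaring operation $\widetilde{Sq}^0_*\colon (QP_5)_{2d+5}\to (QP_5)_d$ and Wood's theorem on spikes: writing $m = 7\cdot 2^s - 5$, one checks that $\mu(m)$ and the binary expansion behave so that the relevant degrees are linked by Kameko's map, so it suffices to settle a base degree (here $m=9$ for $s=1$, and the stable value reached from $s\geqslant 2$) and then show that the Kameko map is an isomorphism in the remaining range. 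Concretely, I would invoke the known structure of $(QP_4)_d$ from Sum~\cite{su3,su1} to control the ``admissible'' monomials having at least one variable of exponent zero (the $P_k$-subspaces indexed by strictly inadmissible/admissible monomials), and then handle separately the monomials in which all five variables appear. The bulk of the work is organizing the generators of $(P_5)_m$ modulo hits into the standard classes of \emph{admissible monomials} and showing there are no further relations; this is the step I expect to be the main obstacle, as it requires a long but finite case analysis of the Cartan formula and the $\mathcal{A}$-action on spikes, exactly the kind of computation that in Sum's $k=4$ papers runs to many pages.

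For part~(i), once a spanning set of admissible monomials is produced, I would prove linear independence in $(QP_5)_m$ by exhibiting, for each candidate basis monomial, an explicit $\mathcal{A}$-linear functional (or equivalently, by running the standard ``hit'' criterion: a monomial is hit iff it lies in the span of $Sq^{2^i}$-images, detected by Kameko's and Sum's reduction lemmas). The count then yields $191$ when $s=1$ and $1245$ when $s\geqslant 2$; the jump is accounted for by the fact that for $s=1$ the degree $9$ is small enough that Kameko's map is not injective, whereas for $s\geqslant 2$ the map stabilizes and the dimension becomes constant. I would present the basis as an explicit list (broken into the part coming from the image of Kameko's map and its complement), as is customary in this literature.

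For part~(ii), the goal is to show $(QP_5)_m^{GL_5}=0$ for all $s\geqslant 1$. Since $GL_5$ is generated by the permutation subgroup $\Sigma_5$ together with a single transvection, I would first compute the $\Sigma_5$-invariants $(QP_5)_m^{\Sigma_5}$ by grouping the admissible basis monomials into $\Sigma_5$-orbits and finding the symmetric combinations; this reduces the invariant space to a manageable finite-dimensional space. Then I would impose invariance under the extra generator $\rho\colon x_1\mapsto x_1+x_2$ (fixing the other variables), compute the action on each symmetric class modulo hits using the admissibility-reduction lemmas, and solve the resulting linear system to conclude that the only invariant is $0$. The main obstacle here is bookkeeping: one must express $\rho$ applied to each symmetric polynomial back in terms of the chosen admissible basis, which again requires repeated application of the hit-reduction machinery. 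A useful sanity check throughout is compatibility with Kameko's map, which intertwines the $GL_5$-actions, so that vanishing of invariants in the base degree propagates upward; combined with the explicit $\Sigma_5$-orbit computation in the unstable degree $9$, this forces $(QP_5)_m^{GL_5}=0$ in every case. Finally, Theorem~\ref{mthm} follows immediately: by part~(ii) the target $(QP_5)_m^{GL_5}$ of $\varphi_5$ is zero, hence $\varphi_5$ is trivially an epimorphism in this degree.
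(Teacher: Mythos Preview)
Your proposal is correct and follows essentially the same route as the paper: reduce to the base degrees $m=9$ and $m=23$ via Sum's stabilization theorem for iterated Kameko maps (Theorem~\ref{cthm}), split $(QP_5)_m = QP_5^0 \oplus QP_5^+$ using the known $QP_4$ results for the first summand, classify admissible monomials in $QP_5^+$ by weight vector, and then for part~(ii) compute $\Sigma_5$-invariants orbit-by-orbit before imposing invariance under the transvection $g_5$. The only place where the paper is more specific than your sketch is the linear-independence step: rather than abstract $\mathcal{A}$-linear functionals, it uses the substitution homomorphisms $p_{(i;j)}\colon P_5\to P_4$ to project relations down to the already-known $QP_4$ basis.
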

This theorem has been proved by Singer \cite{si1} for $s=1$. In \cite{hu3}, H\uhorn ng computed the dimensions of $(QP_5)_{m}$ and $(QP_5)_{m}^{GL_5}$ for $s = 2$ by using computer calculation. However, the detailed proof was unpublished at the time of the writing.

The proof of Theorem \ref{pthm2} is long and very technical.
One of our main tools is Kameko's homomorphism
$\widetilde{Sq}^0_*: QP_k \to QP_k,$ 
which is induced by an $\mathbb F_2$-linear map $\phi: P_k \to P_k$, given by
$$
\phi(x) = 
\begin{cases}y, &\text{if }x=x_1x_2\ldots x_ky^2,\\  
0, & \text{otherwise,} \end{cases}
$$
for any monomial $x \in P_k$. The map $\phi$ is not an $\mathcal A$-homomorphism. However, 
$\phi Sq^{2i} = Sq^{i}\phi$ and $\phi Sq^{2i+1} = 0$ for any non-negative integer $i$.

For a positive integer $n$, by $\mu(n)$ one means the smallest number $r$ for which it is possible to write $n = \sum_{1\leqslant i\leqslant r}(2^{u_i}-1),$ where $u_i >0$. 

\begin{thm}[see Kameko~\cite{ka}]\label{dlk} 
Let $d$ be a non-negative integer. If $\mu(2d+k)=k$, then 
$$(\widetilde{Sq}^0_*)_{(k,d)} := \widetilde{Sq}^0_*: (QP_k)_{2d+k}\longrightarrow (QP_k)_d$$
is an isomorphism of $GL_k$-modules. 
\end{thm}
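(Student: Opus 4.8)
The plan is the following. Write $L=x_1x_2\cdots x_k$ and let $P_k^{(2)}\subseteq P_k$ be the subalgebra of squares. Because $\phi Sq^{2i+1}=0$ and $\phi Sq^{2i}=Sq^i\phi$, the map $\phi$ carries $\mathcal A^+P_k$ into $\mathcal A^+P_k$, so $\widetilde{Sq}^0_*$ is defined on $QP_k$; and since $\phi(L\,x^{2\gamma})=x^\gamma$ for every monomial $x^\gamma$ of degree $d$, the restriction $(\widetilde{Sq}^0_*)_{(k,d)}\colon (QP_k)_{2d+k}\to(QP_k)_d$ is onto with no hypothesis on $d$. So the whole point is injectivity, and this is where $\mu(2d+k)=k$ is used; the $GL_k$-equivariance will drop out of the argument below.

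For injectivity I would pass to a submodule picture. Let $V\subseteq P_k$ be the span of the monomials $x_1^{b_1}\cdots x_k^{b_k}$ having $b_l$ even for at least one $l$ (with $b_l=0$ counted as even), and let $W$ be the span of the monomials with every $b_l$ odd, so that $P_k=V\oplus W$ and $W=L\cdot P_k^{(2)}$. First I would check that $V$ is a submodule over $\mathcal A$ and over $GL_k$: if $b_l$ is even then, by Lucas' theorem, every monomial occurring in $Sq^j(x^\beta)$ — and, after a short case analysis, in $g(x^\beta)$ for a transvection $g\in GL_k$ — again has an even exponent in the $l$-th place. Hence $\bar\pi\colon P_k\to P_k/V$ is a morphism of $\mathcal A$- and $GL_k$-modules. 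Next, using the Cartan formula and $Sq^a(L)=L\,e_a$ (the $a$-th elementary symmetric polynomial) one finds that modulo $V$
$$Sq^{2i}(Lg^2)\equiv L\,(Sq^i g)^2,\qquad Sq^{2i+1}(Lg^2)\equiv 0,$$
since each remaining term carries a factor $e_a$ with $a\ge 1$ and so lies in $V$; similarly $g(L)\equiv L\pmod V$ for $g\in GL_k$, because any degree-$k$ monomial other than $L$ has a zero exponent and hence lies in $V$. Therefore the composite $W\hookrightarrow P_k\twoheadrightarrow P_k/V$ identifies $P_k/V$, compatibly with the actions of $\mathcal A$ and of $GL_k$, with the ``Kameko double'' of $P_k$: the module with underlying space $P_k$, on which $Sq^{2i}$ acts as $Sq^i$ and $Sq^{\mathrm{odd}}$ acts as $0$, regraded so that $(P_k)_n$ sits in degree $2n+k$. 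In particular $Q(P_k/V)$ in degree $2d+k$ is isomorphic as a $GL_k$-module to $(QP_k)_d$, and under this identification $Q(\bar\pi)$ is exactly $(\widetilde{Sq}^0_*)_{(k,d)}$ (compare the two on monomials).

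Now $Q(\bar\pi)$ is induced by the surjection $\bar\pi$, so by right-exactness of $\mathbb F_2\otimes_{\mathcal A}(-)$ it is onto and its kernel is the image of $V$ in $QP_k$, that is, the span of the classes of those degree-$(2d+k)$ monomials that have an even exponent. Hence $(\widetilde{Sq}^0_*)_{(k,d)}$ is an isomorphism if and only if $V\cap(P_k)_{2d+k}\subseteq\mathcal A^+P_k$, i.e. every monomial of degree $2d+k$ in $P_k$ that has an even exponent is hit.

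Establishing this last inclusion is the main obstacle, and it is exactly where $\mu(2d+k)=k$ must enter — it is false for general $d$. The argument I would give: if the monomial $x^\beta$ has some exponent equal to $0$ it lies in some $P_{k-1}$, and since $\mu(2d+k)=k>k-1$ it is hit already in $P_{k-1}$, hence in $P_k$, by Wood's theorem \cite{wo}; otherwise $x^\beta$ has an even exponent $b_l\ge 2$, and applying $Sq^1$ to $x^\beta/x_l$ gives
$$x^\beta\equiv\sum_{m\neq l,\ b_m\,\mathrm{odd}} x^{\beta-e_l+e_m}\pmod{\mathcal A^+P_k},$$
whose terms are again monomials of degree $2d+k$ with an even exponent. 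The condition $\mu(2d+k)=k$ is precisely what lets one run this descent in a well-founded order on such monomials: a monomial at which it stalled would yield a representation $2d+k=\sum_{i=1}^{r}(2^{u_i}-1)$ with $r<k$, contradicting $\mu(2d+k)=k$. Organising this as an induction on $k$ together with careful bookkeeping of the binary-digit (``spike'') data of $\beta$ — essentially Kameko's argument in \cite{ka} — completes the proof, and $(\widetilde{Sq}^0_*)_{(k,d)}$ is then a $GL_k$-equivariant bijection, hence an isomorphism of $GL_k$-modules.
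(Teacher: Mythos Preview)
The paper does not give its own proof of this theorem; it simply states it with a citation to Kameko's thesis~\cite{ka}. So there is no ``paper's proof'' to compare against.

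That said, your conceptual setup is correct and is the standard way to frame the result. The subspace $V$ spanned by monomials with at least one even exponent is indeed both an $\mathcal A$-submodule and a $GL_k$-submodule (your verification via Lucas' theorem is right: a transvection $x_i\mapsto x_i+x_j$ applied to $x^\beta$ with $b_l$ even either leaves $x_l^{b_l}$ untouched, or, if $l=i$, produces only terms with the $i$-th exponent still even). The identification of $P_k/V$ with the ``Kameko double'' of $P_k$ as an $\mathcal A$-$GL_k$-bimodule is correct, and the reduction of injectivity to the statement ``every degree-$(2d+k)$ monomial with an even exponent is hit'' is exactly the right reformulation.

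The gap is in your final step. The $Sq^1$ move $x^\beta\equiv\sum_{m\ne l,\ b_m\ \text{odd}}x^{\beta-e_l+e_m}$ is valid, but it does not make progress in any obvious well-founded order: each term $x^{\beta-e_l+e_m}$ has exactly the same number of odd exponents as $x^\beta$ (position $l$ becomes odd, position $m$ becomes even), none of the exponents becomes zero (so Wood's theorem never gets invoked), and the process can cycle. Your sentence about a stalled descent producing a representation $2d+k=\sum_{i=1}^r(2^{u_i}-1)$ with $r<k$ is not justified---nothing in the $Sq^1$ step alone forces such a representation. Kameko's actual argument requires a more elaborate induction using higher Steenrod operations (or an equivalent device such as the $\chi$-trick), together with careful tracking of the weight vector, which you acknowledge but do not supply. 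In the end your proposal, like the paper, defers the hard part to~\cite{ka}.
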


Denote by  $\alpha(n)$ the number of ones in dyadic expansion of a positive integer $n$ and by $\zeta(n)$ the greatest integer $u$ such that $n$ is divisible by $2^u$. That means $n = 2^{\zeta(n)}m$ with $m$ an odd integer.  Set 
$$t(k,d) = \max\{0,k- \alpha(d+k) -\zeta(d+k)\}.$$ 
Sum proved in \cite{su2} the following.

\begin{thm}[see Sum \cite{su2}]\label{cthm} Let $d$ be an arbitrary non-negative integer. Then
$$(\widetilde{Sq}^0_*)^{s-t}: (QP_k)_{k(2^s-1) + 2^sd} \longrightarrow (QP_k)_{k(2^t-1) + 2^td}$$
is an isomorphism of $GL_k$-modules for every $s \geqslant t$ if and only if $t \geqslant  t(k,d)$.
\end{thm}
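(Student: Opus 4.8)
The plan is to reduce the statement to Kameko's theorem (Theorem~\ref{dlk}) together with an arithmetic analysis of the function $\mu$, using in addition two classical facts: spike monomials $\prod_i x_i^{2^{a_i}-1}$ are never hit, and $(QP_k)_n = 0$ whenever $\mu(n) > k$ (Wood). Throughout set $m = k+d$ and $n_s = k(2^s-1)+2^s d$, so that $n_s + k = 2^s m$, $n_{s+1} = 2n_s + k$, $\alpha(n_s+k) = \alpha(m)$ and $\zeta(n_s+k) = s + \zeta(m)$.

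The heart of the proof is the combinatorial lemma
$$\mu(n_s) = k \quad\Longleftrightarrow\quad s \geq t(k,d)+1 .$$
I would establish it from the characterization that $\mu(n) \leq r$ exactly when $n+r$ is even with $\alpha(n+r) \leq r$. Writing $m = 2^{\zeta(m)}b$ with $b$ odd and expanding $n_s + r = 2^{s+\zeta(m)}b - (k-r)$ in base $2$ for $0 \leq r \leq k$, a single borrow yields $\alpha(n_s+r) = \alpha(m) - 1 + \bigl(s+\zeta(m)\bigr) - \alpha(k-r-1)$; since $\max_{1\leq r\leq k-1}\bigl(r + \alpha(k-r-1)\bigr) = k-1$, this quantity exceeds $r$ for every admissible $r<k$ precisely when $s \geq k - \alpha(m) - \zeta(m) + 1$, while $\alpha(n_s+k)=\alpha(m)\leq k$ forces $\mu(n_s)\leq k$ in that range. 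The truncation $\max\{0,\cdot\}$ in $t(k,d)$ is harmless, and the degenerate case $\alpha(m)+\zeta(m)>k$ — where the relevant $QP_k$-groups vanish by Wood's theorem so that the assertion is vacuous — is disposed of separately. The genuinely delicate points are the range of validity of the one-borrow formula for $\alpha\bigl(2^{s+\zeta(m)}b-(k-r)\bigr)$ when $s$ is small, and the parity constraint $n_s \equiv r \pmod 2$ used in extracting the exact threshold; this arithmetic is where I expect the bulk of the work to lie.

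Granting the lemma, both implications are short. If $t \geq t(k,d)$, factor $(\widetilde{Sq}^0_*)^{s-t}$ as the composite of the maps $(\widetilde{Sq}^0_*)_{(k,\,n_{j-1})}\colon (QP_k)_{n_j}\to(QP_k)_{n_{j-1}}$ for $j=s,s-1,\dots,t+1$; each has $j \geq t+1 \geq t(k,d)+1$, hence $\mu(2n_{j-1}+k) = \mu(n_j) = k$ by the lemma, so Theorem~\ref{dlk} makes each factor an isomorphism of $GL_k$-modules and therefore so is the composite. Conversely, suppose $t < t(k,d)$, so $t(k,d)\geq 1$ and $t \leq t(k,d)-1$; take $s = t(k,d)$, which satisfies $s \geq t$. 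The composite $(QP_k)_{n_{t(k,d)}}\to(QP_k)_{n_t}$ factors through $(\widetilde{Sq}^0_*)_{(k,\,n_{t(k,d)-1})}\colon (QP_k)_{n_{t(k,d)}}\to(QP_k)_{n_{t(k,d)-1}}$, so it is enough to show the latter is not injective. By the lemma $\mu(n_{t(k,d)}) < k$, so $n_{t(k,d)} = \sum_{i=1}^{r}(2^{u_i}-1)$ for some $r<k$ and $u_i \geq 1$; then $S = x_1^{2^{u_1}-1}\cdots x_r^{2^{u_r}-1} \in (P_k)_{n_{t(k,d)}}$ is a spike not divisible by $x_1x_2\cdots x_k$, whence $\phi(S)=0$ and $\widetilde{Sq}^0_*[S] = 0$, while $[S] \neq 0$ in $QP_k$ because spikes are never hit. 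This nonzero element of the kernel contradicts injectivity, and the contrapositive is complete.
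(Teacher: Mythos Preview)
The paper does not contain its own proof of Theorem~\ref{cthm}; the result is quoted from Sum's preprint \cite{su2} and used only as a black box (to reduce the computation of $(QP_5)_{7\cdot 2^s-5}$ to the single case $s=2$). So there is no in-paper argument to compare against.

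That said, your approach is the natural one and is essentially correct. The reduction to the arithmetic lemma $\mu(n_s)=k \iff s\geqslant t(k,d)+1$, the forward direction by iterating Kameko's Theorem~\ref{dlk} one step at a time, and the converse by producing a short spike in the kernel, is exactly how such a statement is proved. Your single-borrow formula $\alpha(n_s+r)=\alpha(m)-1+(s+\zeta(m))-\alpha(k-r-1)$ is correct in the stated range, and the identity $\max_{0\leqslant r\leqslant k-1}\bigl(r+\alpha(k-r-1)\bigr)=k-1$ (which is just $\alpha(c)\leqslant c$) gives the threshold cleanly. For the converse you correctly note that $t<t(k,d)$ forces $t(k,d)\geqslant 1$, hence $\alpha(m)<k$, so $\mu(n_{t(k,d)})\leqslant k$ and therefore $\mu(n_{t(k,d)})<k$; the resulting spike lies in $P_k^0$, is killed by $\phi$, and is nonzero in $QP_k$ by Lemma~\ref{bdbs}.

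The only point that needs more care than your sketch indicates is the validity range of the borrow formula: it requires $k-r\leqslant 2^{s+\zeta(m)}$, which can fail for small $s$ when $k$ is large relative to $m$. In that regime the subtraction borrows into the bits of $b-1$ and the clean expression for $\alpha(n_s+r)$ breaks down; one then has to verify $\mu(n_s)<k$ by a direct decomposition rather than via the formula. You flag this yourself, and it is routine, but it should be written out explicitly rather than left as a remark.
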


For $d=2$, we have $t(5,2) = 2$ and $5(2^s-1) + 2^sd = 7.2^s-5$. So, by Theorem \ref{cthm},
$(\widetilde{Sq}^0_*)^{s-2}: (QP_5)_{7.2^s-5} \longrightarrow (QP_5)_{23}$
is an isomorphism of $GL_5$-modules for every $s \geqslant 2$. Hence, we need only to prove Theorem \ref{pthm2} by computing $(QP_5)_{7.2^s-5}$ and $(QP_5)_{7.2^s-5}^{GL_5}$ for $s=1,2$.

From the results of Tangora \cite{ta}, Lin \cite{wl} and Chen \cite{che}, we obtain
$$\text{Tor}_{5,7.2^s}^{\mathcal A}(\mathbb F_2,\mathbb F_2) = \begin{cases}\langle (Ph_1)^*\rangle, &\text{if } s=1,\\ \langle (h_{s}g_{s -1})^*\rangle, &\text{if }s \geqslant 2,\end{cases}$$
and $h_{s}g_{s -1} \ne 0$, where $h_s$ denote the Adams element in $\text{\rm Ext}_{\mathcal A}^{1,2^s}(\mathbb F_2, \mathbb F_2)$, $P$ is the Adams periodicity operator in \cite{ad} and $g_{s-1} \in \text{\rm Ext}_{\mathcal A}^{4,2^{s+2}+2^{s+1}}(\mathbb F_2, \mathbb F_2)$ for $s \geqslant 2$. 
Hence, by Theorem \ref{pthm2}(ii), the homomorphism
$$\varphi_5: \text{Tor}_{5,7.2^s}^{\mathcal A}(\mathbb F_2,\mathbb F_2) \longrightarrow (QP_5)_{7.2^s-5}^{GL_5}$$
is an epimorphism. Theorem \ref{mthm} is proved.

Observe that in this case, $\varphi_5$ is not a monomorphism. So, our result confirms the one of H\uhorn ng.

\begin{corl}[See H\uhorn ng \cite{hu3}]\label{ch}  There are infinitely many degrees in which $\varphi_5$ is not a monomorphism.
\end{corl}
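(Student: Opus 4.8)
\textbf{Proof proposal for Corollary \ref{ch}.} The plan is to read the statement off directly from Theorem \ref{pthm2}(ii) together with the computation of $\text{Tor}_{5,7.2^s}^{\mathcal A}(\mathbb F_2,\mathbb F_2)$ recalled in the proof of Theorem \ref{mthm}. First I would fix a positive integer $s$ and put $n = n_s := 7.2^s-5$, so that $k+n = 5+n = 7.2^s$. By the results of Tangora \cite{ta}, Lin \cite{wl} and Chen \cite{che} quoted above, the group $\text{Tor}_{5,7.2^s}^{\mathcal A}(\mathbb F_2,\mathbb F_2)$ is one-dimensional — spanned by $(Ph_1)^*$ when $s=1$, and by $(h_sg_{s-1})^*$ with $h_sg_{s-1}\ne 0$ when $s\geqslant 2$ — and in particular nonzero. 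On the other hand, Theorem \ref{pthm2}(ii) asserts $(QP_5)_{n}^{GL_5}=0$ for every $s\geqslant 1$. Hence the homomorphism
$$\varphi_5\colon \text{Tor}_{5,5+n}^{\mathcal A}(\mathbb F_2,\mathbb F_2)\longrightarrow (QP_5)_{n}^{GL_5}$$
has a nonzero domain and a zero codomain, so it cannot be a monomorphism in degree $n=n_s$.

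Second, I would note that the degrees $n_s = 7.2^s-5$, $s=1,2,3,\ldots$, are pairwise distinct: indeed $n_{s+1}-n_s = 7.2^s>0$, so the sequence $(n_s)_{s\geqslant 1}$ is strictly increasing. Thus the first paragraph exhibits an infinite family of degrees in which $\varphi_5$ fails to be injective, which is exactly the assertion of the corollary.

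There is essentially no obstacle remaining at this point: the entire difficulty has been absorbed into Theorem \ref{pthm2}(ii), whose proof — the explicit determination of $(QP_5)_{7.2^s-5}$ and of its $GL_5$-invariants, via the Kameko and Sum reduction theorems (Theorems \ref{dlk} and \ref{cthm}) to the cases $s=1,2$ and then a case-by-case analysis there — is the technical heart of the paper. The only point one should state explicitly in the write-up is that the listed generators of $\text{Tor}_{5,7.2^s}^{\mathcal A}(\mathbb F_2,\mathbb F_2)$ are genuinely nonzero classes, which is why the non-vanishing of the domain of $\varphi_5$ is guaranteed; for $s\geqslant 2$ this is the recorded fact $h_sg_{s-1}\ne 0$, and for $s=1$ it is the classical nonvanishing of $Ph_1$ in $\text{\rm Ext}_{\mathcal A}^{5,14}(\mathbb F_2,\mathbb F_2)$.
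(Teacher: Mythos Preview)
Your proposal is correct and matches the paper's argument essentially line for line: the paper observes just before the corollary that $\text{Tor}_{5,7.2^s}^{\mathcal A}(\mathbb F_2,\mathbb F_2)\ne 0$ while $(QP_5)_{7.2^s-5}^{GL_5}=0$ by Theorem~\ref{pthm2}(ii), so $\varphi_5$ fails to be injective in each of the infinitely many degrees $7.2^s-5$. There is nothing to add.
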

This paper is organized as follows. In Section \ref{s2}, we recall  some needed information on the admissible monomials in $P_k$, Singer's criterion on the hit monomials  and Kameko's homomorphism. Our results will be presented in Section \ref{s3}. Finally, in Section \ref{s7}, we list all the admissible monomials of degrees 10, 23 in $P_5$.

Theorems \ref{mthm}, \ref{pthm2} and \ref{cthm} have already been announced in \cite{t-s16}.

\section{Preliminaries}\label{s2}
\setcounter{equation}{0}

In this section, we recall some needed information from Kameko~\cite{ka} and Singer \cite{si2}, which will be used in the next section.
\begin{nota} We denote $\mathbb N_k = \{1,2, \ldots , k\}$ and
\begin{align*}
X_{\mathbb J} = X_{\{j_1,j_2,\ldots , j_s\}} =
 \prod_{j\in \mathbb N_k\setminus \mathbb J}x_j , \ \ \mathbb J = \{j_1,j_2,\ldots , j_s\}\subset \mathbb N_k,
\end{align*}
In particular, $X_{\mathbb N_k} =1,\
X_\emptyset = x_1x_2\ldots x_k,$ 
$X_j = x_1\ldots \hat x_j \ldots x_k, \ 1 \leqslant j \leqslant k,$ and $X:=X_k \in P_{k-1}.$

Let $\alpha_i(a)$ denote the $i$-th coefficient  in dyadic expansion of a non-negative integer $a$. That means
$a= \alpha_0(a)2^0+\alpha_1(a)2^1+\alpha_2(a)2^2+ \ldots ,$ for $ \alpha_i(a) =0$ or 1 with $i\geqslant 0$. 

Let $x=x_1^{a_1}x_2^{a_2}\ldots x_k^{a_k} \in P_k$. Denote $\nu_j(x) = a_j, 1 \leqslant j \leqslant k$.  
Set 
$$\mathbb J_t(x) = \{j \in \mathbb N_k :\alpha_t(\nu_j(x)) =0\},$$
for $t\geqslant 0$. Then, we have
$x = \prod_{t\geqslant 0}X_{\mathbb J_t(x)}^{2^t}.$ 
\end{nota}
\begin{defn}
For a monomial  $x$ in $P_k$,  define two sequences associated with $x$ by
\begin{align*} 
\omega(x)=(\omega_1(x),\omega_2(x),\ldots , \omega_i(x), \ldots),\ \
\sigma(x) = (\nu_1(x),\nu_2(x),\ldots ,\nu_k(x)),
\end{align*} 
where
$\omega_i(x) = \sum_{1\leqslant j \leqslant k} \alpha_{i-1}(\nu_j(x))= \deg X_{\mathbb J_{i-1}(x)},\ i \geqslant 1.$
The sequence $\omega(x)$ is called  the weight vector of $x$. 

Let $\omega=(\omega_1,\omega_2,\ldots , \omega_i, \ldots)$ be a sequence of non-negative integers.  The sequence $\omega$ is called  the weight vector if $\omega_i = 0$ for $i \gg 0$.
\end{defn}

The sets of all the weight vectors and the exponent vectors are given the left lexicographical order. 

  For a  weight vector $\omega$,  we define $\deg \omega = \sum_{i > 0}2^{i-1}\omega_i$.  
Denote by   $P_k(\omega)$ the subspace of $P_k$ spanned by all monomials $y$ such that
$\deg y = \deg \omega$, $\omega(y) \leqslant \omega$, and by $P_k^-(\omega)$ the subspace of $P_k$ spanned by all monomials $y \in P_k(\omega)$  such that $\omega(y) < \omega$. 

\begin{defn}\label{dfn2} Let $\omega$ be a weight vector and $f, g$ two polynomials  of the same degree in $P_k$. 

i) $f \equiv g$ if and only if $f - g \in \mathcal A^+P_k$. If $f \equiv 0$ then $f$ is called {\it hit}.

ii) $f \equiv_{\omega} g$ if and only if $f - g \in \mathcal A^+P_k+P_k^-(\omega)$. 
\end{defn}

Obviously, the relations $\equiv$ and $\equiv_{\omega}$ are equivalence ones. Denote by $QP_k(\omega)$ the quotient of $P_k(\omega)$ by the equivalence relation $\equiv_\omega$. Then, we have 
$$QP_k(\omega)= P_k(\omega)/ ((\mathcal A^+P_k\cap P_k(\omega))+P_k^-(\omega)).$$  

For a  polynomial $f \in  P_k$, we denote by $[f]$ the class in $QP_k$ represented by $f$. If  $\omega$ is a weight vector, then denote by $[f]_\omega$ the class represented by $f$. Denote by $|S|$ the cardinal of a set $S$.

It is easy to see that
$$ QP_k(\omega) \cong QP_k^\omega := \langle\{[x] \in QP_k : x \text{ \rm  is admissible and } \omega(x) = \omega\}\rangle. $$
So, we get
$$(QP_k)_n = \bigoplus_{\deg \omega = n}QP_k^\omega \cong \bigoplus_{\deg \omega = n}QP_k(\omega).$$
Hence, we can identify the vector space $QP_k(\omega)$ with $QP_k^\omega \subset QP_k$. 

For $1 \leqslant i \leqslant k$, define the $\mathcal{A}$-homomorphism  $g_i:P_k \to P_k$, which is determined by $g_i(x_i) = x_{i+1}, g_i(x_{i+1}) = x_i$, $g_i(x_j) = x_j$ for $j \ne i, i+1,\ 1 \leqslant i < k$, and $g_k(x_1) = x_1+x_2$,  $g_k(x_j) = x_j$ for $j > 1$.  Note that the general linear group $GL_k$ is generated by the matrices associated with $g_i,\ 1\leqslant i \leqslant k,$ and the symmetric group $\Sigma_k$ is generated by the ones associated with $g_i,\ 1 \leqslant i < k$. 
So, a homogeneous polynomial $f \in P_k$ is an $GL_k$-invariant if and only if $g_i(f) \equiv f$ for $1 \leqslant i\leqslant k$.  If $g_i(f) \equiv f$ for $1 \leqslant i < k$, then $f$ is an $\Sigma_k$-invariant.

We note that the weight vector of a monomial is invariant under the permutation of the generators $x_i$, hence $QP_k(\omega)$ has an action of the symmetric group $\Sigma_k$. 
 Furthermore, we have the following.

\begin{lem}[See Sum \cite{su2}]\label{bdm} Let $\omega$ be a weight vector. Then, $QP_k(\omega)$ is the $GL_k$-module.
\end{lem}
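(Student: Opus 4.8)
\emph{Proof idea.} The plan is to prove the stronger assertion that the two subspaces $P_k(\omega)$ and $P_k^-(\omega)$ of $P_k$ are each stable under the whole group $GL_k$. Granting this, the lemma is immediate: $GL_k$ acts on $P_k$ by $\mathcal{A}$-linear automorphisms, so it preserves $\mathcal{A}^+P_k$, hence it preserves the subspace $(\mathcal{A}^+P_k\cap P_k(\omega))+P_k^-(\omega)$ of $P_k(\omega)$, and therefore acts on the quotient $QP_k(\omega)=P_k(\omega)/((\mathcal{A}^+P_k\cap P_k(\omega))+P_k^-(\omega))$.

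To prove the stability, recall that $GL_k$ is generated by the matrices of $g_1,\dots ,g_k$ and that $g_1,\dots ,g_{k-1}$ generate $\Sigma_k$. Since the weight vector of a monomial is invariant under any permutation of the variables, each $g_i$ with $i<k$ carries $P_k(\omega)$ and $P_k^-(\omega)$ onto themselves; and since every element of $GL_k$ over $\mathbb{F}_2$ has finite order, it remains only to show that $g_k$ maps $P_k(\omega)$ into $P_k(\omega)$ and $P_k^-(\omega)$ into $P_k^-(\omega)$. As $g_k$ preserves degrees, both inclusions follow from the combinatorial assertion that \emph{for every monomial $x\in P_k$, each monomial occurring in $g_k(x)$ has weight vector $\leqslant\omega(x)$ in the left lexicographic order}: apply this with $x$ ranging over the monomial bases of $P_k(\omega)$, respectively $P_k^-(\omega)$.

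To establish the assertion, write $x=x_1^{a_1}x_2^{a_2}\cdots x_k^{a_k}$, so $g_k(x)=(x_1+x_2)^{a_1}x_2^{a_2}\cdots x_k^{a_k}$; by Lucas' theorem its monomials are exactly the $x_1^{a_1-i}x_2^{a_2+i}x_3^{a_3}\cdots x_k^{a_k}$ with $i$ running over the integers whose binary digits lie among those of $a_1$. Such a passage amounts to transferring the powers $2^{j}$ with $\alpha_j(a_1)=1$ from the exponent of $x_1$ to that of $x_2$, one at a time and in increasing order of $j$. The only real computation, and the main obstacle, is the effect of a single transfer of $2^{j}$: the binary digits in positions below $j$ are untouched (so $\omega_1(x),\dots,\omega_{j}(x)$ are unchanged), and in position $j$ the digit of the first exponent drops from $1$ to $0$; if the $j$-th digit of the second exponent is $0$ it is raised to $1$ with no carry and the weight vector is unchanged, whereas if it is $1$ a carry occurs, $\omega_{j+1}$ strictly decreases, and — being the first entry to change — this forces a strict lexicographic decrease of the whole weight vector, however the carry propagates upward. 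Composing these steps proves the assertion. Finally, since $g_k$ preserves $P_k(\omega)$, $P_k^-(\omega)$ and $\mathcal{A}^+P_k$ and satisfies $g_k^2=1$, it induces a linear automorphism of $QP_k(\omega)$; together with the already-noted $\Sigma_k$-action this makes $QP_k(\omega)$ a $GL_k$-module.
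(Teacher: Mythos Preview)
Your proof is correct. The paper itself does not give a proof of this lemma; it is stated with the attribution ``See Sum \cite{su2}'' and no argument is supplied in the text. Your write-up therefore fills in precisely what the paper omits.

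The approach you take is the natural and standard one: reduce to showing that $P_k(\omega)$ and $P_k^-(\omega)$ are $GL_k$-stable, which in turn reduces (since $\Sigma_k$-stability is already noted in the paper just before the lemma) to checking the single extra generator $g_k$. The key combinatorial fact---that every monomial appearing in $g_k(x)$ has weight vector $\leqslant \omega(x)$ in the left lexicographic order---is exactly the content one needs, and your analysis of a single transfer of $2^j$ is correct: bits below position $j$ are untouched, and at position $j$ either nothing changes (no carry) or $\omega_{j+1}$ drops by $2$ (carry), forcing a strict lex decrease regardless of what happens in higher positions. Decomposing a general $i\preceq a_1$ into single-bit transfers in increasing order is legitimate because subtraction from $a_1$ never borrows (the bits being removed are all present), so each step is well-defined and the weight vector is nonincreasing throughout. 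Since $g_k^2=1$, stability under $g_k$ is automatically bijective, and the $GL_k$-action on $QP_k(\omega)$ follows.
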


\begin{defn}\label{defn3} 
Let $x, y$ be monomials of the same degree in $P_k$. We say that $x <y$ if and only if one of the following holds:  

i) $\omega (x) < \omega(y)$;

ii) $\omega (x) = \omega(y)$ and $\sigma(x) < \sigma(y).$
\end{defn}

\begin{defn}
A monomial $x$ is said to be inadmissible if there exist monomials $y_1,y_2,\ldots, y_m$ such that $y_t<x$ for $t=1,2,\ldots , m$ and $x - \sum_{t=1}^my_t \in \mathcal A^+P_k.$ 

A monomial $x$ is said to be admissible if it is not inadmissible.
\end{defn}

Obviously, the set of all the admissible monomials of degree $n$ in $P_k$ is a minimal set of $\mathcal{A}$-generators for $P_k$ in degree $n$. 

\begin{thm}[See Kameko \cite{ka}]\label{dlcb1}  
 Let $x, y, w$ be monomials in $P_k$ such that $\omega_i(x) = 0$ for $i > r>0$, $\omega_s(w) \ne 0$ and   $\omega_i(w) = 0$ for $i > s>0$.

{\rm i)}  If  $w$ is inadmissible, then  $xw^{2^r}$ is also inadmissible.

{\rm ii)}  If $w$ is strictly inadmissible, then $wy^{2^{s}}$ is also strictly inadmissible.
\end{thm}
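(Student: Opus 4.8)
The plan is to read off both parts from the concrete meaning of (strict) inadmissibility, combined with two elementary facts: the interaction of the Steenrod squares with $2^{r}$-th powers, $\big(Sq^{i}(h)\big)^{2^r}=Sq^{2^{r}i}\big(h^{2^r}\big)$ and $Sq^{c}\big(h^{2^r}\big)=0$ unless $2^{r}\mid c$, together with the Cartan formula; and the fact that the order $<$ on monomials of a fixed degree is preserved by the constructions in play. Part (ii) falls out almost immediately; part (i) needs an induction (for instance on the degree), and that is where the real work sits.

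\emph{Part (ii).} Recall that, with $s=\max\{i:\omega_i(w)\neq 0\}$, strict inadmissibility of $w$ means there exist monomials $w_1,\dots,w_m<w$ of degree $\deg w$ with
$$ w=\sum_{t=1}^{m}w_t+\sum_{j=1}^{2s-1}Sq^{j}(h_j),\qquad h_j\in P_k. $$
I would multiply this identity by $y^{2^{s}}$. For $1\le j\le 2s-1$ we have $j<2^{s}$, so in the Cartan expansion of $Sq^{j}\big(h_j\,y^{2^{s}}\big)$ every term involving $Sq^{b}\big(y^{2^{s}}\big)$ with $b>0$ vanishes, since all exponents of $y^{2^{s}}$ are divisible by $2^{s}$; hence $Sq^{j}(h_j)\,y^{2^{s}}=Sq^{j}\big(h_j\,y^{2^{s}}\big)\in\mathcal A^{+}P_k$ and
$$ w\,y^{2^{s}}=\sum_{t=1}^{m}w_t\,y^{2^{s}}+\sum_{j=1}^{2s-1}Sq^{j}\big(h_j\,y^{2^{s}}\big). $$
Two observations close the argument. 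First, $\omega(w)$ is concentrated in positions $\le s$ while $y^{2^{s}}$ affects only positions $>s$, so the inequality $w_t<w$ (decided by $\omega$, or by $\sigma$ when $\omega(w_t)=\omega(w)$) is inherited verbatim: $w_t\,y^{2^{s}}<w\,y^{2^{s}}$. Second, if $t'=\max\{i:\omega_i(w\,y^{2^{s}})\neq 0\}$ then $t'\ge s$ because $\omega_s(w)\neq 0$, so $2s-1\le 2t'-1$ and the displayed identity has exactly the shape required to conclude that $w\,y^{2^{s}}$ is strictly inadmissible.

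\emph{Part (i).} Here I would induct on $n=\deg w$. Inadmissibility of $w$ gives $w=\sum_t w_t+\sum_{u}Sq^{2^{u}}(h_u)$ with $w_t<w$ of degree $n$. Raising to the $2^{r}$-th power gives $w^{2^{r}}=\sum_t w_t^{2^{r}}+\sum_u Sq^{2^{u+r}}\big(h_u^{2^{r}}\big)$; multiplying by $x$ and expanding each $Sq^{2^{u+r}}\big(x\,h_u^{2^{r}}\big)$ by Cartan — only the summands with $2^{r}$ dividing the index on the $h_u^{2^{r}}$-slot survive — yields
$$ x\cdot Sq^{2^{u+r}}\big(h_u^{2^{r}}\big)=Sq^{2^{u+r}}\big(x\,h_u^{2^{r}}\big)+\sum_{a\ge 1}Sq^{2^{r}a}(x)\cdot\big(Sq^{2^{u}-a}(h_u)\big)^{2^{r}}. $$
The first term on the right is hit, so
$$ x\,w^{2^{r}}\equiv\sum_t x\,w_t^{2^{r}}+R\pmod{\mathcal A^{+}P_k}, $$
where $R$ is the double sum of the correction terms $Sq^{2^{r}a}(x)\cdot\big(Sq^{2^{u}-a}(h_u)\big)^{2^{r}}$ with $a\ge 1$. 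Each $x\,w_t^{2^{r}}<x\,w^{2^{r}}$, since $\omega(x)$ lives in positions $\le r$ and $w^{2^{r}}$ in positions $>r$, so $w_t<w$ propagates. Thus everything reduces to showing that every monomial occurring in $R$ lies in $\mathcal A^{+}P_k+\langle\,\text{monomials }<x\,w^{2^{r}}\,\rangle$.

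\emph{The main obstacle.} The decisive point is precisely this last claim — taming the correction terms $Sq^{2^{r}a}(x)\cdot\big(Sq^{2^{u}-a}(h_u)\big)^{2^{r}}$ with $a\ge 1$ — and it is exactly here that the hypothesis $\omega_i(x)=0$ for $i>r$ is indispensable. Because every exponent of $x$ is $<2^{r}$, the Cartan expansion forces the index of $Sq$ applied to $x$ to be a multiple of $2^{r}$ and keeps every exponent in $Sq^{2^{r}a}(x)$ strictly below $2^{r+1}$; hence each monomial of $R$ factors as a monomial with all exponents $<2^{r+1}$ times a $2^{r}$-th power. A weight-vector comparison — isolating the digit in position $r$ and tracking the carry where it meets the $2^{r}$-th power — should show each such monomial is strictly below $x\,w^{2^{r}}$ in the order $<$, possibly after a further reduction supplied by the inductive hypothesis (the monomials of $Sq^{2^{u}-a}(h_u)$ have degree $<n$) and by part (ii) applied to those pieces that appear in the form $z\,v^{2^{p}}$ with $z$ strictly inadmissible. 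Making this weight-vector bookkeeping airtight is the technical heart of the proof; the remainder is formal manipulation with Cartan's formula and the Frobenius.
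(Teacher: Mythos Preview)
The paper does not contain a proof of this theorem --- it is cited from Kameko's thesis with the attribution ``See Kameko~\cite{ka}'' and used as a black box throughout. So there is nothing in the paper to compare against; what follows is an assessment of your argument on its own merits.

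Your part~(ii) is correct in outline, but the bound in the definition of strict inadmissibility should read $2^{s}-1$, not $2s-1$ (you write the latter three times, including in the closing inequality ``$2s-1\le 2t'-1$'', which should be $2^{s}-1\le 2^{t'}-1$). Strict inadmissibility of $w$ means $w-\sum_t w_t=\sum_{0<j<2^{s}}Sq^{j}(h_j)$, and it is precisely the inequality $j<2^{s}$ that forces $Sq^{j}(h_j)\,y^{2^{s}}=Sq^{j}(h_j\,y^{2^{s}})$ via Cartan. With that correction your argument is complete.

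Part~(i) has a genuine gap at exactly the point you flag but do not resolve. Your proposed induction on $\deg w$ cannot control the correction sum $R$, because $R$ is built from the \emph{auxiliary} polynomials $h_u$, which are completely unconstrained: they need not be monomials, need not be inadmissible, and the monomials appearing in them bear no a~priori relation to $w$ at all. In particular the term with $a=2^{u}$ contributes $Sq^{2^{u+r}}(x)\cdot h_u^{2^{r}}$, where $h_u$ is arbitrary and not hit, so neither your inductive hypothesis nor part~(ii) applies to it. The ``weight-vector comparison'' you sketch cannot work as stated, since the monomials of $R$ can have weight vectors larger than $\omega(xw^{2^{r}})$. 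What is actually required is the standalone lemma: \emph{if $g\in\mathcal A^{+}P_k$ and every exponent of $x$ is $<2^{r}$, then $x\,g^{2^{r}}\in\mathcal A^{+}P_k$}. Your Cartan identity is the first step toward this, but closing the recursion needs a different organisation --- the monomials of $Sq^{2^{r}a}(x)$ have exponents $<2^{r+1}$ rather than $<2^{r}$, so the parameter $r$ moves the wrong way for a naive induction, and one must instead combine the unstable vanishing $Sq^{c}(x)=0$ for $c>\deg x$ (which kills all but finitely many correction terms) with an induction on a different quantity. This is the substantive content of Kameko's argument, and your sketch does not supply it.
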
 

Now, we recall a result of Singer \cite{si2} on the hit monomials in $P_k$. 

\begin{defn}\label{spi}  A monomial $z$ in $P_k$   is called a spike if $\nu_j(z)=2^{t_j}-1$ for $t_j$ a non-negative integer and $j=1,2, \ldots , k$. If $z$ is a spike with $t_1>t_2>\ldots >t_{r-1}\geqslant t_r>0$ and $t_j=0$ for $j>r,$ then it is called the minimal spike.
\end{defn}

In \cite{si2}, Singer showed that if $\mu(n) \leqslant k$, then there exists uniquely a minimal spike of degree $n$ in $P_k$. 

\begin{lem}[See \cite{sp}]\label{bdbs}
 All the spikes in $P_k$ are admissible and their weight vectors are weakly decreasing. 
Furthermore, if a weight vector $\omega$ is weakly decreasing and $\omega_1 \leqslant k$, then there is a spike $z$ in $P_k$ such that $\omega (z) = \omega$. 
\end{lem}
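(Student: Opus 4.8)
The plan is to treat the three assertions of the lemma separately, in increasing order of difficulty. For the weight-vector claim, write a spike as $z=\prod_{j=1}^{k}x_j^{2^{t_j}-1}$ with $t_j\geqslant 0$; since the dyadic expansion of $2^{t_j}-1$ has $1$'s exactly in positions $0,1,\dots,t_j-1$, one reads off $\omega_i(z)=\sum_{j=1}^{k}\alpha_{i-1}(2^{t_j}-1)=\#\{j:t_j\geqslant i\}$. These cardinalities are non-increasing in $i$ and vanish once $i>\max_j t_j$, so $\omega(z)$ is a weakly decreasing weight vector with $\omega_1(z)=\#\{j:t_j\geqslant1\}\leqslant k$. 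For the converse, given a weakly decreasing weight vector $\omega$ with $\omega_1\leqslant k$, I would set $t_j:=\#\{i\geqslant1:\omega_i\geqslant j\}$ for $1\leqslant j\leqslant k$; then $t_j=0$ for $j>\omega_1$, so $z:=\prod_{j=1}^{k}x_j^{2^{t_j}-1}$ is a genuine spike, and the elementary conjugate-partition identity together with $\omega_i\leqslant\omega_1\leqslant k$ gives $\#\{j:t_j\geqslant i\}=\#\{1\leqslant j\leqslant k:\omega_i\geqslant j\}=\omega_i$, i.e.\ $\omega(z)=\omega$.

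The substantive step is that spikes are admissible, and for this the idea is to test against a single linear functional. For a spike $z$ let $\lambda_z\colon(P_k)_{\deg z}\to\mathbb F_2$ send a monomial to its coefficient on $z$; I claim $\lambda_z$ annihilates $\mathcal A^+P_k$. Since every element of $\mathcal A^+P_k$ of degree $\deg z$ is a sum $\sum_{u\geqslant0}Sq^{2^u}(h_u)$, it suffices to show that $z$ does not occur in $Sq^{2^u}(m)$ for any monomial $m=\prod_{r}x_r^{a_r}$ of degree $\deg z-2^u$. By the iterated Cartan formula and $Sq^{i}(x^{a})=\binom{a}{i}x^{a+i}$, the coefficient of $z=\prod_r x_r^{2^{t_r}-1}$ in $Sq^{2^u}(m)$ equals $\prod_{r=1}^{k}\binom{a_r}{2^{t_r}-1-a_r}$. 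For $0\leqslant a\leqslant 2^{t}-1$ the integers $a$ and $2^{t}-1-a$ have disjoint binary supports (there are no carries in the addition $a+(2^{t}-1-a)=2^{t}-1$), so by Lucas' theorem $\binom{a}{2^{t}-1-a}$ is odd only when $2^{t}-1-a=0$; and this binomial coefficient is $0$ when $a>2^{t}-1$. Hence the product above is odd only if $a_r=2^{t_r}-1$ for every $r$, that is $m=z$, which is impossible since $\deg m=\deg z-2^u<\deg z$. Thus $\lambda_z$ vanishes on $\mathcal A^+P_k$. Finally, if $z$ were inadmissible we could write $z-\sum_t y_t\in\mathcal A^+P_k$ with monomials $y_t<z$, hence $y_t\neq z$ and $\lambda_z(y_t)=0$; applying $\lambda_z$ gives $1=\lambda_z(z)=\sum_t\lambda_z(y_t)=0$, a contradiction. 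So every spike is admissible.

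I expect the admissibility step to be the only place one can go wrong. It is tempting to try to descend in degree via Kameko's homomorphism $\widetilde{Sq}^0_*$, or to invoke permutation-invariance of admissibility, but neither tool is available in general: Kameko's map is an isomorphism only when $\mu(\deg z)=k$ (Theorem \ref{dlk}), and the order $<$ defining admissibility depends on $\sigma(x)$ and so is not invariant under permuting the $x_i$. The functional argument bypasses both difficulties; its only genuine ingredient is the mod-$2$ evaluation of $\binom{a}{2^{t}-1-a}$ via Lucas' (equivalently Kummer's) theorem, and that is the one computation I would double-check.
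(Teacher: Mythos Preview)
Your proof is correct. The paper itself does not prove this lemma; it merely cites \cite{sp}, so there is no in-paper argument to compare against.

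Your argument for admissibility via the linear functional $\lambda_z$ is clean and standard. One minor remark: you restrict to $Sq^{2^u}$ in the hit expression, but in fact your binomial computation works verbatim for any $Sq^N$ with $N>0$, so you need not invoke the generation of $\mathcal A$ by the $Sq^{2^u}$ at all. The key identity---that for $0\leqslant a\leqslant 2^t-1$ the binomial $\binom{a}{2^t-1-a}$ is odd only when $a=2^t-1$, because $a$ and $2^t-1-a$ have complementary binary supports---is exactly right and is the heart of the matter. The conjugate-partition description of the weight vector and its converse are also correct.
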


The following is a criterion for the hit monomials in $P_k$.

\begin{thm}[See Singer~\cite{si2}]\label{dlsig} Suppose $x \in P_k$ is a monomial of degree $n$, where $\mu(n) \leqslant k$. Let $z$ be the minimal spike of degree $n$. If $\omega(x) < \omega(z)$, then $x$ is hit.
\end{thm}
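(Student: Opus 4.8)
We sketch the argument I would use. Since the admissible monomials of degree $n$ form a basis of $(QP_k)_n$, and since reducing an inadmissible monomial $x$ produces only monomials $y<x$ (so $\omega(y)\leqslant\omega(x)$), it suffices to prove:
$$(\star)\qquad\text{every admissible monomial }w\in(P_k)_n\text{ with }\mu(n)\leqslant k\text{ satisfies }\omega(w)\geqslant\omega(z).$$
Indeed, granting $(\star)$, if $\omega(x)<\omega(z)$ and $[x]=\sum_w c_w[w]$ with the $w$ admissible, then every $w$ with $c_w\neq0$ would satisfy $\omega(w)\leqslant\omega(x)<\omega(z)\leqslant\omega(w)$, which is absurd; hence $[x]=0$ and $x$ is hit.

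To prove $(\star)$ I would induct on $n$. Factor $w=X_{\mathbb J_0(w)}\,u^2$ with $u=\prod_{t\geqslant1}X_{\mathbb J_t(w)}^{2^{t-1}}$; then $m:=\deg X_{\mathbb J_0(w)}=\omega_1(w)$, $\deg u=(n-m)/2$, $\omega(u)=(\omega_2(w),\omega_3(w),\dots)$, and $m\equiv n\pmod 2$. Since $\omega_1(z)=\mu(n)$, there are three cases. If $m>\mu(n)$, then $\omega(w)>\omega(z)$ already. If $m=\mu(n)$, then $X_{\mathbb J_0(w)}$ is square-free, so Theorem~\ref{dlcb1}(i) (taken with $r=1$ in contrapositive form) shows $u$ is admissible; writing $z=\prod_j x_j^{2^{t_j}-1}$ with $t_1>\dots>t_{r-1}\geqslant t_r>0$, one checks directly that $\mu(\deg u)\leqslant\omega_2(z)\leqslant\mu(n)\leqslant k$ and that the minimal spike of degree $\deg u$ has weight vector $(\omega_2(z),\omega_3(z),\dots)$; hence by induction $\omega(u)\geqslant(\omega_2(z),\omega_3(z),\dots)$, and so $\omega(w)=(\mu(n),\omega(u))\geqslant\omega(z)$.

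The case $m<\mu(n)$ is the crux; here I would show that $w$ is in fact hit, so that, being admissible, it cannot occur. If $m=0$ then $w=u^2$, and since $v^2=Sq^{\deg v}(v)$ for any monomial $v$ (immediate from the Cartan formula) and $\deg u=n/2\geqslant1$, $w$ is hit. If $m\geqslant1$, write $w=X_{\mathbb J_0(w)}\cdot Sq^{\deg u}(u)$ and apply the ``$\chi$-trick''
$$Sq^{i}(f)\cdot g\ \equiv\ f\cdot\overline{Sq}^{\,i}(g)\pmod{\mathcal A^+P_k}\qquad(i\geqslant1),\qquad \overline{Sq}^{\,i}:=\chi(Sq^i),$$
which follows by expanding $\sum_{j=0}^{i}Sq^{j}\big(f\cdot\overline{Sq}^{\,i-j}(g)\big)$ with the Cartan formula and the antipode identity $\sum_{b+c=\ell}Sq^{b}\overline{Sq}^{\,c}=0$ $(\ell\geqslant1)$; this yields $w\equiv u\cdot\overline{Sq}^{\,\deg u}(X_{\mathbb J_0(w)})$. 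Since $\overline{Sq}^{\,t}(x_j)=x_j^{t+1}$ if $t+1$ is a power of $2$ and $\overline{Sq}^{\,t}(x_j)=0$ otherwise, the Cartan formula shows that $\overline{Sq}^{\,t}$ of a square-free monomial of degree $m$ is nonzero if and only if $t$ is a sum of $m$ numbers of the form $2^{s}-1$, i.e. if and only if $\mu(t)\leqslant m$. Finally, if $(n-m)/2=\sum_{j=1}^{m}(2^{s_j}-1)$ then $n=\sum_{j=1}^{m}(2^{s_j+1}-1)$, forcing $\mu(n)\leqslant m$, contrary to hypothesis; thus $\mu\big((n-m)/2\big)>m$, so $\overline{Sq}^{\,\deg u}(X_{\mathbb J_0(w)})=0$ and $w\equiv0$.

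The delicate step is this last case: one must notice that the square factor $u^2$ equals $Sq^{\deg u}(u)$, that the $\chi$-trick then transfers the square-free factor $X_{\mathbb J_0(w)}$ onto a conjugate square $\overline{Sq}^{\,\deg u}(X_{\mathbb J_0(w)})$, and that the hypothesis $\omega_1(x)<\mu(n)$ is precisely the arithmetic condition that makes this conjugate square vanish. The case $m=\mu(n)$ is routine once Theorem~\ref{dlcb1}(i) is available, and the reduction to $(\star)$ is purely formal.
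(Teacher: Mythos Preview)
The paper does not prove Theorem~\ref{dlsig}; it is quoted from Singer~\cite{si2} as a known result, so there is no in-paper proof to compare against.

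Your argument is essentially a correct reconstruction of Singer's original proof. The heart of the matter is exactly what you identify as ``the crux'': when $\omega_1(w)<\mu(n)$, the $\chi$-trick
\[
f\cdot Sq^{i}(g)\ \equiv\ \overline{Sq}^{\,i}(f)\cdot g \pmod{\mathcal A^{+}P_k}
\]
together with the formula $\overline{Sq}^{\,t}(x_j)=x_j^{t+1}$ for $t=2^{s}-1$ (and $0$ otherwise) forces $w$ to be hit, and your arithmetic check that $\mu\big((n-m)/2\big)>m$ whenever $m<\mu(n)$ is precisely Singer's observation. The reduction to $(\star)$ via admissible monomials, and the use of Theorem~\ref{dlcb1}(i) in contrapositive form to push the induction through when $\omega_1(w)=\mu(n)$, are a clean way to organize the inductive step; Singer's own presentation is slightly different in packaging but rests on the same two ingredients.

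One point you pass over with ``one checks directly'' deserves a sentence: that the minimal spike of degree $(n-\mu(n))/2$ has weight vector $(\omega_2(z),\omega_3(z),\ldots)$. This amounts to showing that if $n=\sum_{j=1}^{r}(2^{t_j}-1)$ is the minimal-spike decomposition of $n$, then $\sum_{t_j\geq 2}(2^{t_j-1}-1)$ is the minimal-spike decomposition of $(n-r)/2$; equivalently, that $\mu\big((n-r)/2\big)=|\{j:t_j\geq 2\}|$. This follows because any strictly shorter decomposition of $(n-r)/2$ would, after doubling and adding $r$ ones, either reproduce an $r$-term decomposition of $n$ or (after combining three $1$'s into a single $3$) a shorter one, contradicting $\mu(n)=r$ in the latter case and the uniqueness of the minimal spike in the former. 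With that detail filled in, your proof is complete.
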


This result implies a result of Wood, which original is a conjecture of Peterson~\cite{pe}.
 
\begin{thm}[See Wood~\cite{wo}]\label{dlmd1} 
If $\mu(n) > k$, then $(QP_k)_n = 0$.
\end{thm}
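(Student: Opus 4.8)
The plan is to deduce Theorem~\ref{dlmd1} directly from Singer's criterion, Theorem~\ref{dlsig}, by enlarging the number of variables. Assume $\mu(n)>k$ and put $m:=\mu(n)$, so that $m>k$ and, trivially, $\mu(n)=m\leqslant m$. Identify $P_k$ with the subalgebra of $P_m=\mathbb F_2[x_1,\ldots,x_m]$ generated by $x_1,\ldots,x_k$; note that for a monomial $x\in P_k$ the weight vector $\omega(x)$ is unchanged when $x$ is regarded in $P_m$, since the adjoined variables $x_{k+1},\ldots,x_m$ occur with exponent $0$ and hence contribute $0$ to every $\omega_i$.

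First I would pin down the minimal spike of degree $n$ in $P_m$. By the result of Singer recalled just before Lemma~\ref{bdbs}, since $\mu(n)=m$ there is a minimal spike $z=\prod_j x_j^{2^{t_j}-1}$ in $P_m$, with exactly $r$ nonzero exponents $t_1>\cdots>t_{r-1}\geqslant t_r>0$. Because $z$ is a monomial in $m$ variables, $r\leqslant m$; and because $n=\sum_{j=1}^r(2^{t_j}-1)$ exhibits $n$ as a sum of $r$ terms of the form $2^u-1$, we have $r\geqslant\mu(n)=m$. Hence $r=m$ and $\omega_1(z)=\#\{j:t_j\geqslant 1\}=m$.

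Now let $x$ be any monomial of degree $n$ in $P_k$. Then $\omega_1(x)=\sum_{j=1}^k\alpha_0(\nu_j(x))\leqslant k<m=\omega_1(z)$, so $\omega(x)<\omega(z)$ in the left lexicographic order. Applying Theorem~\ref{dlsig} in $P_m$ (whose hypothesis $\mu(n)\leqslant m$ holds), we conclude that $x$ is hit in $P_m$, i.e. $x\in\mathcal A^+P_m$. Finally, consider the $\mathcal A$-algebra retraction $\rho\colon P_m\to P_k$ with $\rho(x_j)=x_j$ for $j\leqslant k$ and $\rho(x_j)=0$ for $j>k$. Since $\rho$ commutes with the Steenrod squares, $\rho(\mathcal A^+P_m)\subseteq\mathcal A^+P_k$; as $\rho$ fixes $x$, it follows that $x\in\mathcal A^+P_k$. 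Thus every monomial of degree $n$ in $P_k$ is hit, so $(QP_k)_n=0$.

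There is no serious obstacle here: the argument is a short reduction, and the only points needing a word of justification are routine — that $\omega(x)$ is insensitive to adjoining variables of exponent zero, that the retraction $\rho$ is a morphism of $\mathcal A$-modules so that hitness descends from $P_m$ to $P_k$, and that the minimal spike in $P_m$ necessarily uses all $m$ of its variables (equivalently, cannot be shortened below $\mu(n)$ parts). All the real content is supplied by Theorem~\ref{dlsig}.
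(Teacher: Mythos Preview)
Your argument is correct. The paper itself does not supply a proof of Theorem~\ref{dlmd1}; it merely remarks that Singer's criterion (Theorem~\ref{dlsig}) implies Wood's theorem, and cites \cite{wo}. Your write-up is precisely a clean way to unpack that implication: since Theorem~\ref{dlsig} requires the hypothesis $\mu(n)\leqslant k$, one cannot invoke it directly in $P_k$ when $\mu(n)>k$, so you pass to $P_m$ with $m=\mu(n)$, apply the criterion there, and retract back via the $\mathcal A$-algebra map $\rho$. The three points you flag as routine (stability of $\omega(x)$ under adjoining idle variables, $\mathcal A$-linearity of $\rho$, and the fact that the minimal spike in $P_m$ must use all $m$ variables because $\mu(n)=m$) are all correctly justified.
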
 

Now, we recall some notations and definitions in \cite{su1}, which will be used in the next sections. We set 
\begin{align*} 
P_k^0 &=\langle\{x=x_1^{a_1}x_2^{a_2}\ldots x_k^{a_k} \ : \ a_1a_2\ldots a_k=0\}\rangle,
\\ P_k^+ &= \langle\{x=x_1^{a_1}x_2^{a_2}\ldots x_k^{a_k} \ : \ a_1a_2\ldots a_k>0\}\rangle. 
\end{align*}

It is easy to see that $P_k^0$ and $P_k^+$ are the $\mathcal{A}$-submodules of $P_k$. Furthermore, we have the following.

\begin{prop}\label{2.7} We have a direct summand decomposition of the $\mathbb F_2$-vector spaces
$QP_k =QP_k^0 \oplus  QP_k^+.$
Here $QP_k^0 = \mathbb F_2\otimes_{\mathcal A}P_k^0$ and  $QP_k^+ = \mathbb F_2\otimes_{\mathcal A}P_k^+$.
\end{prop}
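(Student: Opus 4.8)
The plan is to establish the decomposition at the level of the vector spaces $QP_k$ by exhibiting $QP_k^0$ and $QP_k^+$ as complementary subspaces, and this follows almost formally from the fact that $P_k^0$ and $P_k^+$ are $\mathcal A$-submodules of $P_k$ with $P_k = P_k^0 \oplus P_k^+$ as $\mathbb F_2$-vector spaces. First I would observe that, since $P_k$ has a monomial basis and every monomial $x = x_1^{a_1}\cdots x_k^{a_k}$ satisfies exactly one of the conditions $a_1a_2\cdots a_k = 0$ or $a_1a_2\cdots a_k > 0$, we have an internal direct sum $P_k = P_k^0 \oplus P_k^+$ of graded $\mathbb F_2$-vector spaces. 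The key point is then that the Steenrod operations respect this splitting: if $x$ is a monomial with some exponent equal to $0$, then by the Cartan formula every monomial appearing in $Sq^i(x)$ again has some exponent equal to $0$ (applying $Sq^i$ to a power $x_j^{a_j}$ with $a_j = 0$ yields only $x_j^0 = 1$, so the $j$-th exponent stays $0$); hence $\mathcal A^+ P_k^0 \subseteq P_k^0$, i.e.\ $P_k^0$ is an $\mathcal A$-submodule, and similarly for $P_k^+$ one checks directly that $P_k^+$ is closed under the $\mathcal A$-action. This is exactly the assertion preceding the proposition, so it may be taken as given.

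Granting that $P_k^0$ and $P_k^+$ are $\mathcal A$-submodules with $P_k = P_k^0 \oplus P_k^+$, the functor $\mathbb F_2 \otimes_{\mathcal A} (-)$ — equivalently, passing to the quotient by $\mathcal A^+ P_k$ — is additive and commutes with finite direct sums, so
$$QP_k = \mathbb F_2 \otimes_{\mathcal A} P_k = \mathbb F_2 \otimes_{\mathcal A}(P_k^0 \oplus P_k^+) = (\mathbb F_2 \otimes_{\mathcal A} P_k^0) \oplus (\mathbb F_2 \otimes_{\mathcal A} P_k^+) = QP_k^0 \oplus QP_k^+.$$
Concretely, I would argue as follows: the inclusions $P_k^0 \hookrightarrow P_k$ and $P_k^+ \hookrightarrow P_k$ induce maps $QP_k^0 \to QP_k$ and $QP_k^+ \to QP_k$ whose images span $QP_k$ (since the monomials span), so the induced map $QP_k^0 \oplus QP_k^+ \to QP_k$ is surjective. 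For injectivity, suppose $f_0 \in P_k^0$ and $f_+ \in P_k^+$ with $f_0 + f_+ \in \mathcal A^+ P_k$; writing $\mathcal A^+ P_k = \mathcal A^+ P_k^0 \oplus \mathcal A^+ P_k^+$ (again using that the submodule structure splits the augmentation ideal action) and comparing the $P_k^0$- and $P_k^+$-components, we get $f_0 \in \mathcal A^+ P_k^0$ and $f_+ \in \mathcal A^+ P_k^+$, i.e.\ $[f_0] = 0$ in $QP_k^0$ and $[f_+] = 0$ in $QP_k^+$. Hence the map is an isomorphism.

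There is no real obstacle here: the only thing to be careful about is the bookkeeping that $\mathcal A^+ P_k$ splits compatibly as $(\mathcal A^+ P_k \cap P_k^0) \oplus (\mathcal A^+ P_k \cap P_k^+)$, which is precisely what the $\mathcal A$-submodule property of $P_k^0$ and $P_k^+$ gives us — any element $\sum_u Sq^{2^u}(h_u)$ can be split by writing each $h_u = h_u^0 + h_u^+$ according to the decomposition of $P_k$, and then $Sq^{2^u}(h_u^0) \in P_k^0$ and $Sq^{2^u}(h_u^+) \in P_k^+$. I would then remark, as the authors do, that $QP_k^0$ can be identified with $\mathbb F_2 \otimes_{\mathcal A} P_k^0$ and $QP_k^+$ with $\mathbb F_2 \otimes_{\mathcal A} P_k^+$, completing the proof. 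This decomposition is the standard one used to reduce the hit problem for $P_k$ to the "core" problem on $P_k^+$ together with the already-understood pieces coming from fewer variables.
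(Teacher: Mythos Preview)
Your proof is correct and is exactly the standard argument one would give; the paper itself does not supply a proof of this proposition but simply records, immediately before it, that ``it is easy to see that $P_k^0$ and $P_k^+$ are $\mathcal A$-submodules of $P_k$,'' from which the decomposition follows formally by additivity of $\mathbb F_2\otimes_{\mathcal A}(-)$, just as you argue.
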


\begin{defn}
For any $1 \leqslant i < j \leqslant k$, we define the homomorphism $p_{(i;j)}: P_k \to P_{k-1}$ of algebras by substituting
$$p_{(i;j)}(x_u) =\begin{cases} x_u, &\text{ if } 1 \leqslant u < i,\\
x_{j-1}, &\text{ if }  u = i,\\  
x_{u-1},&\text{ if } i< u \leqslant k.
\end{cases}$$
For $1 \leqslant i \leqslant k$, define the homomorphism $f_i: P_{k-1} \to P_k$ of algebras by substituting
$$f_i(x_j) = \begin{cases} x_j, &\text{ if } 1 \leqslant j <i,\\
x_{j+1}, &\text{ if } i \leqslant j <k.
\end{cases}$$
Then, $p_{(i;j)}$ is a homomorphism of $\mathcal A$-modules
and $p_{(i;j)}(f_i(y)) = y$ for any $y \in P_{k-1}$. 
\end{defn}

For a subset $B \subset P_k,$ we denote $[B] = \{[f]:\,f\in  B \}.$ If $B \subset P_k(\omega),$ then we set $[B]_\omega = \{[f]_\omega:\, f\in  B \}.$ From Theorem \ref{dlsig}, we see that if $\omega$ is the weight vector of a minimal spike in $P_k$, then $[B]_\omega = [B].$
Obviously, we have
\begin{prop}\label{mdp0}
 It is easy to see that if $B$ is a minimal set of generators for $\mathcal A$-module $P_{k-1}$ in degree $n$, then  $f(B)= \bigcup _{i=1}^kf_i(B)$  is a minimal set of generators for $\mathcal A$-module $P_k^0$ in degree $n$.
\end{prop}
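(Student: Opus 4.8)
The plan is to take $B$ to be the set of admissible monomials of degree $n$ in $P_{k-1}$ — the standard choice of minimal $\mathcal A$-generating set — and to show that $f(B)=\bigcup_{i=1}^kf_i(B)$ is exactly the set of admissible monomials of degree $n$ in $P_k^0$; since that set is a minimal set of $\mathcal A$-generators for $P_k^0$ in degree $n$, this proves the proposition. Concretely I would split the verification into a generating part and an independence part.

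For the generating part: every monomial $w\in(P_k^0)_n$ satisfies $\nu_i(w)=0$ for some $i$, hence $w=f_i(y)$ for the unique monomial $y\in(P_{k-1})_n$ obtained by suppressing the $i$-th variable. Since $f_i$ sends each generator to a generator, it commutes with the squaring operations via the Cartan formula, so it is a homomorphism of $\mathcal A$-modules; writing $y\equiv\sum_{b\in B}c_bb$ with $c_b\in\mathbb F_2$ and applying $f_i$ gives $w\equiv\sum_{b\in B}c_bf_i(b)$ in $P_k$. As such $w$ span $(P_k^0)_n$, the set $f(B)$ generates $P_k^0$ in degree $n$.

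For the independence part, the key point is that $f_i$ carries admissible monomials to admissible monomials and inadmissible monomials to inadmissible monomials. The second implication is the easy one: $f_i$ preserves the weight vector and merely inserts a zero in the $i$-th slot of the exponent vector, so it respects the order of Definition~\ref{defn3}; applying $f_i$ to a witness $y\equiv\sum_ty_t$, $y_t<y$, of inadmissibility of $y$ produces such a witness for $f_i(y)$. For the first implication I would use $p_{(i;j)}f_i=\mathrm{id}$ together with the fact that $p_{(i;j)}$ is a homomorphism of $\mathcal A$-modules: if $f_i(y)$ were inadmissible, say $f_i(y)\equiv\sum_tw_t$ with $w_t<f_i(y)$, then $y=p_{(i;j)}f_i(y)\equiv\sum_tp_{(i;j)}(w_t)$, and one checks $p_{(i;j)}(w_t)<y$ for all $t$, contradicting the admissibility of $y$. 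Granting this, $f(B)$ consists of admissible monomials of $(P_k^0)_n$; conversely, if $w$ is an admissible monomial of $(P_k^0)_n$ and $i$ is an index omitted by $w$, then $w=f_i(y)$ with $y$ forced to be admissible by the ``inadmissible $\Rightarrow$ inadmissible'' statement, so $w\in f_i(B)$. Hence $f(B)$ is precisely the set of admissible monomials of $(P_k^0)_n$; being a set of distinct admissible monomials spanning the subspace $(QP_k^0)_n\subseteq(QP_k)_n$ (Proposition~\ref{2.7}), it is a basis of $(QP_k^0)_n$ and therefore a minimal generating set.

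The main obstacle is the order estimate $p_{(i;j)}(w_t)<y$. The map $p_{(i;j)}$ merges the exponents $\nu_i(w_t)$ and $\nu_j(w_t)$, which can change the weight vector, and one has to see that in the left-lexicographic order it can only decrease it: the first dyadic position at which the digit sums of the exponents change is a position at which a binary carry occurs, which lowers $\omega$ there, while if no carry ever occurs the weight vector is unchanged and $p_{(i;j)}$ just deletes a coordinate of the exponent vector — a situation in which one must additionally treat $\nu_i(w_t)=0$ separately, since then no genuine merging takes place. This bookkeeping with dyadic expansions is elementary but is where all the real content sits; the remainder of the argument is formal manipulation with the $\mathcal A$-linear maps $f_i$ and $p_{(i;j)}$ and with the definitions of $P_k^0$ and $QP_k^0$.
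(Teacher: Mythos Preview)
The paper does not actually prove this proposition; it is introduced with ``Obviously, we have'' and no argument is supplied. Your proposal is therefore not competing with any proof in the paper, and your outline --- for the canonical choice of $B$ as the set of admissible monomials, which is the only case the paper ever uses --- is a correct verification of the claim.

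One small imprecision worth flagging: in the no-carry case you write that $p_{(i;j)}$ ``just deletes a coordinate of the exponent vector''. That is only literally true when $\nu_i(w_t)=0$; in general $p_{(i;j)}$ adds $\nu_i(w_t)$ to $\nu_j(w_t)$ (and shifts indices), even when the binary supports of these two exponents are disjoint. The conclusion $\sigma(p_{(i;j)}(w_t))<\sigma(y)$ nevertheless holds. Since $\nu_i(f_i(y))=0$, the first index $m$ at which $\sigma(w_t)$ and $\sigma(f_i(y))$ differ cannot be $i$ itself; if $m<i$ the merge at position $j-1\geqslant i$ leaves the first $i-1$ coordinates untouched and the comparison is already decided there, while if $m>i$ one is forced to have $\nu_i(w_t)=0$, so the merge is indeed a pure deletion of a zero. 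With this correction your ``bookkeeping with dyadic expansions'' goes through exactly as you describe, and the rest of the argument is, as you say, formal.
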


From now on, we denote by $B_{k}(n)$ the set of all admissible monomials of degree $n$  in $P_k$, $B_{k}^0(n) = B_{k}(n)\cap P_k^0$, $B_{k}^+(n) = B_{k}(n)\cap P_k^+$. For a weight vector $\omega$ of degree $n$, we set $B_k(\omega) = B_{k}(n)\cap P_k(\omega)$, $B_k^+(\omega) = B_{k}^+(n)\cap P_k(\omega)$. 

Then, $[B_k(\omega)]_\omega$ and $[B_k^+(\omega)]_\omega$, are respectively the basses of the $\mathbb F_2$-vector spaces $QP_k(\omega)$ and $QP_k^+(\omega) := QP_k(\omega)\cap QP_k^+$.

 For any monomials $z, z_1, z_2, \ldots, z_m$ in $P_k(\omega)$ with $m\geqslant 1$, we denote 
\begin{align*}
\Sigma_k(z_1, z_2, \ldots, z_m) &= \{\sigma z_t : \sigma \in \Sigma_k, 1\leqslant t \leqslant m\} \subset P_k(\omega),\\
[B(z_1, z_2, \ldots, z_m)]_\omega &= [B_k(\omega)]_\omega\cap \langle [\Sigma_k(z_1, z_2, \ldots, z_m)]_\omega\rangle,\\ 
p(z) &= \sum_{y \in B_k(n)\cap\Sigma_k(z)}y.
\end{align*}
Obviously, $ \langle [\Sigma_k(z_1, z_2, \ldots, z_m)]_\omega\rangle$ is an $\Sigma_k$-submodule of $QP_k(\omega)$. Furthermore, it is the $\Sigma_k$-module generated by the set $\{[z_1]_\omega,[z_2]_\omega,\ldots , [z_m]_\omega\}$.

\section{Proofs of the results}\label{s3}

In this section we prove Theorem \ref{pthm2} by explicitly determining all admissible monomials of degree $7.2^s-5$ in $P_5$. Using this results, we determine the space $(QP_5)_{7.2^s-5}^{GL_5}$ for all $s \geqslant 1$. Recall that by Theorem \ref{cthm},
$$(\widetilde{Sq}^0_*)^{s-2}: (QP_5)_{7.2^s-5} \longrightarrow (QP_5)_{23}$$
is an isomorphism of $GL_5$-modules for every $s \geqslant 2$. So, we need only to prove the theorem for $s=1,2$.

\subsection{The case $s=1$}\

\medskip
For $s=1$, we have $7.2^s-5 = 9$. Since Kameko's homomorphism 
$$(\widetilde{Sq}^0_*)_{(5,2)}: (QP_5)_{9} \longrightarrow (QP_5)_{2}$$
is an epimorphism, we have 
$(QP_5)_{9} \cong \text{Ker}(\widetilde{Sq}^0_*)_{(5,2)} \bigoplus (QP_5)_{2}$. It is easy to see that
$B_5(2) =\{x_ix_j  : 1 \leqslant i < j \leqslant 5\}$. So, we need only to compute $\text{Ker}(\widetilde{Sq}^0_*)_{(5,2)}$.

\begin{lems}\label{bdday1}
If $x$  is an admissible monomial of degree $9$ and $[x] \in \text{\rm Ker}(\widetilde{Sq}^0_*)_{(5,2)}$, then either
$\omega(x) = (3,1,1)$ or $\omega(x) = (3,3).$
\end{lems}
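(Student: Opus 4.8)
The plan is to use Singer's criterion to cut down the list of possible weight vectors of an admissible monomial of degree $9$, and then to discard the ones incompatible with lying in $\mathrm{Ker}(\widetilde{Sq}^0_*)_{(5,2)}$ by inspecting Kameko's map. Since $\mu(9)=3\leqslant 5$, there is a minimal spike of degree $9$ in $P_5$; from $9=(2^3-1)+(2^1-1)+(2^1-1)$ it is $z=x_1^{7}x_2x_3$, so $\omega(z)=(3,1,1)$, and Theorem~\ref{dlsig} gives $\omega(x)\geqslant(3,1,1)$ for every admissible monomial $x$ of degree $9$. As $\deg\omega(x)=9$ is odd, $\omega_1(x)$ is odd, and as $x\in P_5$ we have $\omega_1(x)\leqslant 5$; the value $\omega_1(x)=1$ is excluded because it would give $\omega(x)<(3,1,1)$. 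Solving $2\omega_2+4\omega_3+\cdots=9-\omega_1$ for $\omega_1\in\{3,5\}$ then leaves exactly
\[\omega(x)\in\{(3,1,1),\ (3,3),\ (5,2),\ (5,0,1)\},\]
and it remains to rule out the two vectors beginning with $5$ under the hypothesis $[x]\in\mathrm{Ker}(\widetilde{Sq}^0_*)_{(5,2)}$.

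Here I would observe that $\omega_1(x)=5$ forces every exponent $\nu_j(x)$ to be odd, hence $x=x_1x_2x_3x_4x_5\,y^{2}$ for a monomial $y$ of degree $2$ with $\omega_1(y)=\omega_2(x)$. When $\omega(x)=(5,2)$ this gives $y=x_ix_j$ with $i<j$, so $\widetilde{Sq}^0_*([x])=[\phi(x)]=[x_ix_j]\neq 0$ because $x_ix_j\in B_5(2)$; this contradicts $[x]\in\mathrm{Ker}(\widetilde{Sq}^0_*)_{(5,2)}$. When $\omega(x)=(5,0,1)$ the only monomials with this weight vector are $x=x_i^{5}\prod_{j\neq i}x_j=(x_1x_2x_3x_4x_5)(x_i^{2})^{2}$; since $x_i^{2}=Sq^1(x_i)$ is hit, hence inadmissible, $\omega(x_1\cdots x_5)=(5)$ has $\omega_t=0$ for $t>1$, and $\omega(x_i^{2})=(0,1)$ has $\omega_t=0$ for $t>2$, Theorem~\ref{dlcb1}(i) shows $x$ is inadmissible, again a contradiction. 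Either way $\omega_1(x)=5$ is impossible, leaving $\omega(x)\in\{(3,1,1),(3,3)\}$, which is the assertion.

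The step I expect to need the most care is the exclusion of $\omega(x)=(5,0,1)$: one has to spot the factorization $x_i^{5}\prod_{j\neq i}x_j=(x_1\cdots x_5)(x_i^{2})^{2}$ and check that the weight-vector supports fit the hypotheses of Theorem~\ref{dlcb1}(i). If one prefers to avoid that theorem, the same conclusion comes from the direct identity
\[x_1^{5}x_2x_3x_4x_5 = Sq^2\!\big(x_1^{3}x_2x_3x_4x_5\big) + \big(\text{monomials of weight }(3,1,1)\text{ and }(3,3)\big),\]
whose error terms are all strictly smaller than $x_1^{5}x_2x_3x_4x_5$, together with the $\Sigma_5$-symmetry of the monomials involved. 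Everything else reduces to Singer's criterion and a short, finite enumeration of weight vectors.
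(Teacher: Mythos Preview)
Your proof is correct and follows essentially the same route as the paper: identify the minimal spike $z=x_1^{7}x_2x_3$ with $\omega(z)=(3,1,1)$, invoke Singer's criterion (Theorem~\ref{dlsig}) to force $\omega_1(x)\in\{3,5\}$, and eliminate $\omega_1(x)=5$ using Theorem~\ref{dlcb1} together with Kameko's map.

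The only difference is organizational. You split $\omega_1(x)=5$ into the two subcases $(5,2)$ and $(5,0,1)$ and treat them separately, reserving special care for $(5,0,1)$. The paper dispatches both at once: if $\omega_1(x)=5$ then $x=X_\emptyset y^{2}$ with $y$ of degree~$2$; since $x$ is admissible, Theorem~\ref{dlcb1}(i) forces $y$ to be admissible, whence $[y]\ne 0$ and $(\widetilde{Sq}^0_*)_{(5,2)}([x])=[y]\ne 0$, contradicting the kernel hypothesis. This uniform argument automatically excludes $(5,0,1)$ (because $x_i^{2}$ is not admissible), so the case you flagged as needing the most care actually requires none. Your alternative $Sq^2$ computation is fine but unnecessary.
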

\begin{proof}  Observe that $z = x_1^{7}x_2x_3$ is the minimal spike of degree $9$ in $P_5$ and  $\omega(z) =  (3,1,1)$. Since $[x] \ne 0$, by Theorem \ref{dlsig}, either $\omega_1(x) =3$ or $\omega_1(x) =5$. If $\omega_1(x) =5$, then $x = X_\emptyset y^2$ with $y$ a monomial of degree $2$ in $P_5$. Since $x$ is admissible, by Theorem \ref{dlcb1}, $y$ is admissible. So, $(\widetilde{Sq}^0_*)_{(5,2)}([x]) = [y] \ne 0.$ This contradicts the fact that $[x] \in \text{Ker}(\widetilde{Sq}^0_*)_{(5,2)}$, hence $\omega_1(x) =3$. Then, we have $x = x_ix_jx_\ell y_1^2$ with $y_1$ an admissible monomial of degree $3$ in $P_5$. It is easy to see that either $\omega(y_1) =(1,1)$ or $\omega(y_1) =(3,0)$. The lemma is proved.
\end{proof}

Using this lemma, we see that
$$\text{\rm Ker}(\widetilde{Sq}^0_*)_{(5,2)} = (QP_5^0)_9 \bigoplus QP_5^+(3,1,1) \bigoplus QP_5^+(3,3).$$
From a result in \cite{su5}, we have $\dim(QP_5^0)_9 = 160$.
\begin{props}\label{mds1} $B_5^+(3,1,1)$ is the set of the monomials $x_1x_j^2x_\ell^4x_ux_v$ such that $(1,j,\ell,u,v)$ is a permutation of $(1,2,3,4,5)$, $j<\ell$ and $u < v$.
\end{props}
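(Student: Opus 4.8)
The plan is to determine $B_5^+(3,1,1)$ by first pinning down the exponent vectors that are forced by the weight vector $\omega = (3,1,1)$ on monomials in $P_5^+$, and then eliminating the inadmissible candidates. A monomial $x \in P_5^+$ of degree $9$ with $\omega(x) = (3,1,1)$ must have $x = X_{\mathbb J_0(x)} X_{\mathbb J_1(x)}^2 X_{\mathbb J_2(x)}^4$ with $\deg X_{\mathbb J_0(x)} = 3$, $\deg X_{\mathbb J_1(x)} = 1$, $\deg X_{\mathbb J_2(x)} = 1$, so exactly three of the $x_i$ occur to an odd power, exactly one variable is "active" at the $2^1$-level, and exactly one at the $2^2$-level; since $x\in P_5^+$ every variable has positive exponent. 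First I would enumerate the possible exponents: each $a_i = \alpha_0(a_i) + 2\alpha_1(a_i) + 4\alpha_2(a_i)$ with $\sum a_i = 9$, exactly three of the $\alpha_0$'s equal to $1$, exactly one $\alpha_1$ equal to $1$, exactly one $\alpha_2$ equal to $1$, and all $a_i \geqslant 1$. A short case check shows the two $i$ with $\alpha_0(a_i)=0$ must be the ones carrying the $\alpha_1$ and the $\alpha_2$ bits respectively (otherwise some exponent is $0$ or the weights are wrong), so the exponent multiset is $\{1,1,1,2,4\}$, i.e. $x = x_\alpha x_\beta^2 x_\gamma^4 x_\delta x_\epsilon$ for a permutation $(\alpha,\beta,\gamma,\delta,\epsilon)$ of $(1,2,3,4,5)$. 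The symmetric group $\Sigma_5$ acts transitively on such monomials, so up to permutation there is one shape, say $x_1 x_2^2 x_3^4 x_4 x_5$, and the proposed list is exactly the $\Sigma_5$-orbit, which has $5 \cdot 4 \cdot 3 = 60$ elements once we fix the conventions $j < \ell$ and $u < v$ in $x_1 x_j^2 x_\ell^4 x_u x_v$ (these just remove the redundancy of swapping the two exponent-$1$ "tail" variables, and — note — do \emph{not} overcount since $j$ and $\ell$ play genuinely different roles).

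The real content is to show that every such monomial is admissible; equivalently, that none of these $60$ monomials is a sum of smaller monomials modulo $\mathcal{A}^+ P_5$. I would argue this in two steps. First, by Theorem \ref{dlcb1}(i) applied in the form $xw^{2^r}$, admissibility of $x_1 x_2^2 x_3^4 x_4 x_5$ reduces to admissibility of a monomial in fewer variables or lower degree: writing $x = (x_1 x_4 x_5 x_2)\cdot(x_3)^{4}$ does not quite fit, so instead I would use the standard lowering technique — strip the factor $x_3^4 = (x_3)^{2^2}$ and check that the "cofactor" $x_1 x_2^2 x_4 x_5$ together with the tail is handled by the splitting $P_5 = P_5^0 \oplus$ (higher) or by direct appeal to known bases in degree $5$. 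Concretely, the cleanest route is: since $\omega(x) = (3,1,1)$ is strictly below $\omega(z) = (3,1,1)$... it is equal, so Singer's criterion (Theorem \ref{dlsig}) does \emph{not} kill these, consistent with admissibility. To actually prove admissibility I would verify directly, using the Cartan formula, that $x_1 x_2^2 x_3^4 x_4 x_5$ cannot be written with smaller terms: enumerate all monomials $y < x$ of degree $9$ with $\omega(y) \leqslant (3,1,1)$, set up the (small, explicit) system coming from $Sq^1, Sq^2, Sq^4$ hitting degree-$8$ monomials, and check the target monomial is not in the span. Here Theorem \ref{dlcb1}(ii) is the efficient tool: it suffices to show a small "core" monomial is strictly inadmissible nowhere, i.e. that no core monomial of degree $\leqslant 5$ obstructs it.

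The step I expect to be the genuine obstacle is this admissibility verification — not the enumeration, which is mechanical. In practice one reduces it to a finite linear-algebra computation over $\mathbb{F}_2$ in the span of monomials of weight $\leqslant (3,1,1)$ in degree $9$, and one must organize the Steenrod operations $Sq^1, Sq^2, Sq^4$ carefully (using $\phi Sq^{2i+1} = 0$ and $\phi Sq^{2i} = Sq^i \phi$ to control the interaction with Kameko's map, since by hypothesis we are inside $\mathrm{Ker}(\widetilde{Sq}^0_*)_{(5,2)}$). I would also invoke Theorem \ref{dlcb1}(i) to propagate admissibility from the three-variable sub-case: once $x_1 x_2^2 x_3^4$ (degree $7$ in $P_3$) and its relatives are known admissible — which is classical, from Kameko's thesis — the factor structure $x = (\text{admissible of degree }5)\cdot(\text{spike tail})$ combined with part (i) of the theorem finishes it. Conversely, the fact that the list is \emph{complete} (no other admissible monomial of weight $(3,1,1)$ in $P_5^+$) follows immediately from the exponent analysis in the first paragraph, since every weight-$(3,1,1)$ monomial in $P_5^+$ lies in the single $\Sigma_5$-orbit described.
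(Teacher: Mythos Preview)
You have misread the statement. The list in the proposition is \emph{not} the full $\Sigma_5$-orbit of $x_1x_2^2x_3^4x_4x_5$. The condition ``$(1,j,\ell,u,v)$ is a permutation of $(1,2,3,4,5)$'' forces $x_1$ to carry exponent $1$; together with $j<\ell$ and $u<v$ this leaves exactly $\binom{4}{2}=6$ monomials, not $60$ (and the full orbit has $5!/3!=20$ elements, not $60$). The paper confirms $|B_5^+(3,1,1)|=6$ immediately after the proof. So the substance of the proposition is precisely that $14$ of the $20$ candidate monomials are \emph{inadmissible}, which is the opposite of what you set out to prove.

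Accordingly your plan is aimed at the wrong target. The paper's argument runs as follows: starting from an admissible $x=x_ix_j^2x_\ell^4x_ux_v$ with $i<u<v$, one shows (a) $j<\ell$, because if $\ell<j$ then
\[
x \equiv x_ix_j^4x_\ell^2x_ux_v + Sq^2(x_ix_j^2x_\ell^2x_ux_v) \pmod{P_5^-(3,1,1)},
\]
exhibiting $x$ as strictly inadmissible; and (b) $j\neq 1$, because otherwise
\[
x = x_i^2x_jx_\ell^4x_ux_v + x_ix_jx_\ell^4x_u^2x_v + x_ix_jx_\ell^4x_ux_v^2 + Sq^1(x_ix_jx_\ell^4x_ux_v),
\]
again inadmissible. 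Since $j<\ell$ rules out $\ell=1$, and $j\neq 1$, the index $1$ lies among $\{i,u,v\}$, whence $i=1$. This cuts the $20$ candidates down to the $6$ listed. Admissibility of those $6$ is then established not by Theorem~\ref{dlcb1} or a reduction to three variables, but by checking directly that the six classes are linearly independent in $QP_5$. Your proposed route via ``propagating admissibility from $P_3$'' and Theorem~\ref{dlcb1}(i) does not apply in the direction you want: that theorem produces \emph{in}admissible monomials from inadmissible pieces, it does not certify admissibility.
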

\begin{proof} Let $x$ be an admissible monomial in $P_5^+$ and $\omega(x) = (3,1,1)$. Then $x = x_ix_j^2x_\ell^4x_ux_v$ with $(i,j,\ell,u,v)$ a permutation of $(1,2,3,4,5)$ and $i < u < v$. If $\ell < j$, then using the Cartan formula, we have
$$ x = x_ix_j^4x_\ell^2x_ux_v + Sq^2(x_ix_j^2x_\ell^2x_ux_v) \ \text{ mod}(P_5^-(3,1,1)).$$
So, $x$ is inadmissible, hence $j< \ell$. If $j =1$, then
$$ x = x_i^2x_jx_\ell^4x_ux_v + x_ix_jx_\ell^4x_u^2x_v +x_ix_jx_\ell^4x_ux_v^2+ Sq^1(x_ix_jx_\ell^4x_ux_v).$$
Hence, $x$ is inadmissible, so $1 < j$. Since $i<u<v$ and $j < \ell$, we obtain $i=1$.

By a direct computation, we see that the set 
$$\{[x_1x_j^2x_\ell^4x_ux_v]: 1<j<\ell, 1 < u< v\}$$ 
is linearly independent in $QP_5.$ The proposition follows.
\end{proof}
By a similar argument as given in the proof of Proposition \ref{mds1}, we get the following.
\begin{props}\label{mds2} $B_5^+(3,3)$ is the set of the monomials $x_1x_j^2x_\ell^2x_ux_v^3$ such that $(1,j,\ell,u,v)$ is a permutation of $(1,2,3,4,5)$, and $j<\ell$.  
\end{props}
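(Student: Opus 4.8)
The plan is to mimic the proof of Proposition~\ref{mds1}, with one extra and substantially more delicate case. Let $x\in P_5^+$ be admissible with $\omega(x)=(3,3)$. Then $\deg\omega=9$, every exponent of $x$ is $\geqslant 1$ and $\leqslant 3$ (since $x\in P_5^+$ and $\omega$ has only two nonzero entries), and comparing the contributions to $\omega_1=3$ (the number of odd exponents) and to $\omega_2=3$ (the number of exponents in $\{2,3\}$) forces the exponent multiset of $x$ to be $\{1,1,2,2,3\}$. Hence $x=x_ix_j^2x_\ell^2x_ux_v^3$ for some permutation $(i,j,\ell,u,v)$ of $(1,2,3,4,5)$, and we may normalise $j<\ell$ (the two exponent-$2$ slots being interchangeable). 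It now suffices to prove that $x_1$ is one of the two exponent-$1$ variables $x_i,x_u$, for then $x=x_1x_j^2x_\ell^2x_ux_v^3$ with $j<\ell$, the asserted form.

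Suppose first that $x_1$ carries the exponent $2$, say $j=1$, so $x=x_ix_1^2x_\ell^2x_ux_v^3$ and $\nu_1(x)=2$. Applying $Sq^1$ to $x_ix_1x_\ell^2x_ux_v^3$ and using the Cartan formula (recall $Sq^1x_\ell^2=0$ and $Sq^1x_v^3=x_v^4$) yields
\[
x = x_i^2x_1x_\ell^2x_ux_v^3 + x_ix_1x_\ell^2x_u^2x_v^3 + x_ix_1x_\ell^2x_ux_v^4 + Sq^1(x_ix_1x_\ell^2x_ux_v^3).
\]
The first two monomials on the right again have weight vector $(3,3)$ but with $\nu_1=1<2=\nu_1(x)$, hence are strictly smaller than $x$; the third has weight vector $(3,1,1)<(3,3)$ and so lies in $P_5^-(3,3)$. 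Thus $x$ is inadmissible, so $j\neq 1$, and since $j<\ell$ this also excludes $\ell=1$.

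The hard case is $x_1$ carrying the exponent $3$, say $v=1$, so $x=x_ix_j^2x_\ell^2x_ux_1^3$ with $\nu_1(x)=3$. Here a single $Sq^{2^r}$-identity no longer works: in a Cartan expansion the factor $x_1^3$ is either untouched or raised to $x_1^{\geqslant 4}$, so an identity exhibiting $x$ on its right-hand side always introduces either a monomial of strictly larger weight (for instance $x_1^5x_2x_3x_4x_5$, of weight vector $(5,0,1)>(3,3)$) or a weight-$(3,3)$ monomial still having $\nu_1=3$ with an exponent $2$ merely shifted to a lower-indexed variable --- in neither case a monomial $<x$. I would argue via one of two routes: (a) project $P_5$ onto its $x_1^3$-homogeneous summand $x_1^3\,\mathbb F_2[x_2,\ldots,x_5]$, an operation that commutes with each $Sq^{2^r}$ up to the action on the factor $x_1^3$, which reduces the inadmissibility of $x$ in $P_5$ to that of the weight-$(2,2)$ monomial $x_ix_j^2x_\ell^2x_u$ in $\mathbb F_2[x_2,\ldots,x_5]\cong P_4$, available from Sum's solution of the hit problem for $P_4$; or (b) run a short explicit chain of $Sq^1$-reductions through the six monomials $x_1^3x_j^2x_\ell^2x_ix_u$ ordered by $\leqslant$, each step either lowering the weight below $(3,3)$ or moving toward the strictly smaller weight-$(3,3)$ monomials with $\nu_1=1$. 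Either way $v\neq 1$, and combining the three cases, $B_5^+(3,3)$ is contained in the set of monomials $x_1x_j^2x_\ell^2x_ux_v^3$ with $(1,j,\ell,u,v)$ a permutation of $(1,2,3,4,5)$ and $j<\ell$.

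Finally, exactly as in the proof of Proposition~\ref{mds1}, a direct computation --- applying $Sq^1$, $Sq^2$, $Sq^4$ to the pertinent monomials of degrees $8$, $7$, $5$ and invoking Singer's criterion (Theorem~\ref{dlsig}) --- shows that the $12$ classes $[x_1x_j^2x_\ell^2x_ux_v^3]$, $j<\ell$, are linearly independent in $QP_5$; together with the inclusion just proved, this identifies $B_5^+(3,3)$ with the displayed set, and the proposition follows. I expect the case $v=1$ to be the main obstacle --- it has no analogue in Proposition~\ref{mds1} --- and the linear-independence verification to be the next most laborious step.
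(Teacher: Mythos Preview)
Your argument contains a genuine error in the ``hard case'' $v=1$. You claim that $x_1$ cannot carry the exponent $3$, but this is false: the paper explicitly states $|B_5^+(3,3)|=15$ (just after Proposition~\ref{mds2}) and lists in Subsection~\ref{s71} the three admissible monomials
\[
x_1^{3}x_2x_3x_4^{2}x_5^{2},\qquad x_1^{3}x_2x_3^{2}x_4x_5^{2},\qquad x_1^{3}x_2x_3^{2}x_4^{2}x_5,
\]
each with $\nu_1=3$. Your routes (a) and (b) are therefore attempting to prove a false statement and cannot succeed. Route (a) is in any case not a valid reduction: while the $\mathbb F_2$-linear projection onto monomials with $\nu_1=3$ does commute with each $Sq^n$ (because $\binom{a}{3-a}\equiv 0\pmod 2$ for $a<3$), the implication you need runs the other way --- multiplying a hit element of $P_4$ by $x_1^{3}$ does not yield a hit element of $P_5$, since $x_1^{3}Sq^n(h)$ differs from $Sq^n(x_1^{3}h)$ by terms with $\nu_1\geqslant 4$ that are in general neither hit nor smaller. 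Note too that the displayed description in the proposition, read literally, accounts for only $12$ monomials; this is an imprecision in the paper's wording, and the appendix shows the intended set has $15$ elements.

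Your $Sq^1$ argument ruling out $\nu_1(x)=2$ is correct and is exactly the ``similar argument'' the paper has in mind. What remains is to \emph{analyse} the case $\nu_1(x)=3$ rather than exclude it: write $x=x_1^{3}x_ix_j^{2}x_\ell^{2}x_u$ with $\{i,j,\ell,u\}=\{2,3,4,5\}$, $i<u$, $j<\ell$, and apply $Sq^1$ to $x_1^{3}x_ix_jx_\ell^{2}x_u$. If $j=2$, every resulting monomial either has weight $(3,1,1)$ (from the $x_1^{4}$ term) or has weight $(3,3)$ with $\nu_1=3$ and $\nu_2=1<2=\nu_2(x)$, hence is strictly smaller; so $x$ is inadmissible. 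Thus $j>2$, which forces $i=2$ and yields precisely the $3$ additional monomials above, giving the correct total of $15$. The linear-independence verification must then be carried out for $15$ classes, not $12$.
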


It is easy to see that $|B_5(2)| = 10$, $|B_5^+(3,1,1)| = 6$ and $|B_5^+(3,3)| = 15$. Hence, the first part of Theorem \ref{pthm2} is proved for $s=1$. 

By an easy computation, one gets
\begin{lems}\label{bdnn} $(QP_5)_2^{GL_5} = 0$.
\end{lems}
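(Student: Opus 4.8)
The statement to prove is Lemma~\ref{bdnn}: $(QP_5)_2^{GL_5} = 0$. The plan is to use the explicit basis $B_5(2) = \{x_ix_j : 1 \leqslant i < j \leqslant 5\}$ for $(QP_5)_2$, which has dimension $10$, and to show that no nonzero linear combination of these classes is fixed by all of $GL_5$. Since $(P_5)_2$ has no hit elements (the degree $2$ part of $\mathcal{A}^+P_5$ is spanned by $Sq^1$ applied to linear forms and $Sq^2$ applied to nothing of negative degree, so $\mathcal{A}^+(P_5)$ in degree $2$ consists of sums of $x_i^2 = Sq^1(x_i)$), we actually have $(QP_5)_2 \cong (P_5)_2 / \langle x_1^2,\dots,x_5^2\rangle$, and the $GL_5$-action descends from the natural action on $(P_5)_2$.

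First I would recall that a class $[f]$ is $GL_5$-invariant if and only if $g_i(f) \equiv f$ for $i = 1,\dots,5$, where the $g_i$ generate $GL_5$ as in the preliminaries. The transpositions $g_1,\dots,g_4$ generate $\Sigma_5$, so any invariant class must first be $\Sigma_5$-invariant. The $\Sigma_5$-orbit of $x_1x_2$ in $QP_5$ is all of $\{[x_ix_j]\}$, so the only candidate (up to scalar, working over $\mathbb{F}_2$) for a $\Sigma_5$-invariant class is $f = \sum_{1\leqslant i<j\leqslant 5} x_ix_j$, i.e. $[p(x_1x_2)]$. Then I would apply $g_5$, which sends $x_1 \mapsto x_1 + x_2$ and fixes $x_2,\dots,x_5$: computing $g_5(f) - f$ and reducing modulo $\mathcal{A}^+P_5$ (i.e. modulo the squares $x_i^2$) will show it is nonzero in $QP_5$, so $f$ is not $GL_5$-invariant. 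This forces $(QP_5)_2^{GL_5} = 0$.

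The computation of $g_5(f)-f$ is the only substantive step, and it is short: write $f = x_1(x_2+x_3+x_4+x_5) + \sum_{2\leqslant i<j\leqslant 5}x_ix_j$; applying $g_5$ replaces $x_1$ by $x_1+x_2$ in the first summand and leaves the second summand untouched, giving $g_5(f) - f = x_2(x_2+x_3+x_4+x_5) = x_2^2 + x_2x_3 + x_2x_4 + x_2x_5$. Modulo the hit element $x_2^2 = Sq^1(x_2)$, this reduces to $x_2x_3 + x_2x_4 + x_2x_5$, which is a nonzero element of $(QP_5)_2$ since $\{x_ix_j\}$ is a basis. Hence $g_5(f) \not\equiv f$, so $f$ is not invariant, and since $f$ was the unique (up to scalar) $\Sigma_5$-invariant candidate, we conclude $(QP_5)_2^{GL_5} = 0$.

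There is essentially no obstacle here; the main point to get right is the bookkeeping of the $\Sigma_5$-invariants (there is exactly one nonzero one, since the $\Sigma_5$-action on the $10$-dimensional space is the permutation action on unordered pairs, whose invariant subspace is one-dimensional over $\mathbb{F}_2$) and the correct reduction modulo hit elements when applying $g_5$. Alternatively, one could invoke Lemma~\ref{bdm} together with $t(5,2)=2$ and Kameko's machinery, but the direct argument above is cleaner and self-contained.
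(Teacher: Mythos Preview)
Your proof is correct and follows precisely the method the paper employs throughout for such invariant computations: first determine the $\Sigma_5$-invariants (here the unique candidate $\sum_{i<j}x_ix_j$), then check that $g_5$ does not fix it modulo hit elements. The paper itself omits the details, stating only ``By an easy computation, one gets'' Lemma~\ref{bdnn}, so your write-up is exactly the computation the paper leaves implicit.
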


Since Kameko's homomorphism is an epimorphism of $GL_5$-modules, Lemma \ref{bdnn} implies $(QP_5)_{9}^{GL_5}= \text{\rm Ker}(\widetilde{Sq}_*^0)_{(5,2)}^{GL_5}.$

Using the above results, we see that $\dim ({\rm Ker}(\widetilde{Sq}_*^0)_{(5,2)}) = 181$ with the basis $\bigcup_{i=1}^6[B_5(u_i)]$, where 
\begin{align*} &u_{1} =  x_1x_2x_3^{7},\ u_{2} =  x_1^{3}x_2^{3}x_3^{3},\ u_{3} =  x_1x_2^{3}x_3^{5},\\ 
&u_{4} =  x_1x_2x_3^{2}x_4^{2}x_5^{3},\ u_{5} =  x_1x_2x_3^{2}x_4^{5},\ u_{6} =  x_1x_2^{2}x_3^{3}x_4^{3}.
\end{align*} 

By a direct computation, we obtain the following lemma.
\begin{lems}\label{bdss1}\ 
 We have a direct summand decomposition of the $\Sigma_5$-modules:
$$({\rm \rm Ker}(\widetilde{Sq}_*^0)_{(5,2)} = \bigoplus_{i=1}^4\langle[\Sigma_5(u_i)]\rangle\bigoplus \langle[\Sigma_5(u_5,u_6)]\rangle.$$
\end{lems}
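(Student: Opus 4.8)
The plan is to establish the claimed direct-sum decomposition of $\mathrm{Ker}(\widetilde{Sq}^0_*)_{(5,2)}$ as a $\Sigma_5$-module by first organizing the $181$-dimensional basis $\bigcup_{i=1}^6[B_5(u_i)]$ according to the $\Sigma_5$-orbits of the representing monomials, and then showing that the only nontrivial $\Sigma_5$-module extension among the six orbit-blocks occurs between the blocks of $u_5$ and $u_6$. The first reduction is purely combinatorial: since the weight vector is a $\Sigma_5$-invariant and since the three summands $(QP_5^0)_9$, $QP_5^+(3,1,1)$, $QP_5^+(3,3)$ appearing in Lemma above are $\Sigma_5$-submodules, I would split each of them further along orbits. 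The monomials $u_1,u_2,u_3$ represent classes in $(QP_5^0)_9$ — indeed $\langle[\Sigma_5(u_1)]\rangle$, $\langle[\Sigma_5(u_2)]\rangle$, $\langle[\Sigma_5(u_3)]\rangle$ exhaust the $f_i$-images of the $B_4(9)$-generators of relevant weight — while $u_4$ represents $QP_5^+(3,3)$ (via Proposition \ref{mds2}, the orbit of $x_1x_2x_3^2x_4^2x_5^3$) and $u_5,u_6$ together represent... here one must be careful to match the $u_i$ with the correct weight-graded pieces, using Propositions \ref{mds1} and \ref{mds2} and the dimension count $160+6+15=181$.

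The heart of the argument is showing that the $\Sigma_5$-submodule generated by a union of distinct orbit-blocks is actually their internal direct sum, \emph{except} for the pair $(u_5,u_6)$. Concretely, for each $i\in\{1,2,3,4\}$ I would verify that $\langle[\Sigma_5(u_i)]\rangle$ is a $\Sigma_5$-submodule disjoint (as a subspace) from the span of the remaining blocks, which amounts to checking that applying the generators $g_1,\dots,g_4$ to a monomial in $\Sigma_5(u_i)$ and rewriting modulo hit elements and modulo $P_5^-(\omega)$ produces only classes whose admissible-monomial support lies again in $\Sigma_5(u_i)$ (together with, possibly, lower-weight terms that vanish in the associated graded piece). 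Since the $u_i$ for $i\le 4$ lie in distinct weight components or distinct $P_5^0$-vs-$P_5^+$ components, most of these disjointness statements are immediate from the weight-vector and $P_5^0\oplus P_5^+$ decompositions; the only genuine computation is the interaction between $\Sigma_5(u_5)$ and $\Sigma_5(u_6)$, which must be grouped because $g_4$ mixes the corresponding admissible classes.

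The main obstacle I anticipate is precisely this last point: verifying that $\langle[\Sigma_5(u_5,u_6)]\rangle$ does not split further and is disjoint from $\bigoplus_{i=1}^4\langle[\Sigma_5(u_i)]\rangle$, which requires an explicit $\Sigma_5$-action computation on the admissible bases $[B_5(\omega)]$ for the weight vectors $\omega(u_5)$ and $\omega(u_6)$ — rewriting each $g_j(x)$ for $x$ an admissible monomial in these orbits as a sum of admissible monomials modulo $\mathcal A^+P_5 + P_5^-(\omega)$, using Theorems \ref{dlcb1} and \ref{dlsig} and the explicit admissible lists from Section \ref{s7}. I would carry this out by: (1) listing $B_5(\omega(u_5))$ and $B_5(\omega(u_6))$; (2) computing the matrices of $g_1,\dots,g_4$ on these bases; (3) reading off the submodule lattice to confirm indecomposability of the combined block and triviality of all the other cross-extensions. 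Once these matrix computations are in hand, the decomposition $\mathrm{Ker}(\widetilde{Sq}_*^0)_{(5,2)} = \bigoplus_{i=1}^4\langle[\Sigma_5(u_i)]\rangle\bigoplus\langle[\Sigma_5(u_5,u_6)]\rangle$ follows immediately by comparing dimensions ($181$ on both sides) and noting that each summand is $\Sigma_5$-stable.
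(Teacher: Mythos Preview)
Your overall strategy matches the paper's (which records only ``by a direct computation''), but your preliminary analysis contains a misconception that would derail the execution. The weight vector is $\Sigma_5$-invariant, but this only makes the weight \emph{filtration} on $QP_5$ $\Sigma_5$-stable; the subspace $QP_5^\omega\subset QP_5$ spanned by admissible classes of a fixed weight $\omega$ is \emph{not} a $\Sigma_5$-submodule in general, because if $\sigma x$ is inadmissible then $[\sigma x]$ reduces to admissible classes of weight $\leqslant\omega(x)$ and the strictly lower-weight terms need not vanish. In particular $\langle[\Sigma_5(u_4)]\rangle$ is not $QP_5^+(3,3)$ but all $21$ dimensions of $(QP_5^+)_9\cap\mathrm{Ker}(\widetilde{Sq}^0_*)_{(5,2)}$: its admissible basis mixes the six classes of $B_5^+(3,1,1)$ with the fifteen of $B_5^+(3,3)$, exactly as in the list $v_1,\ldots,v_{21}$ in the proof of Lemma~\ref{bdss2}(ii). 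Likewise $u_5$ and $u_6$ have \emph{different} weights, $\omega(u_5)=(3,1,1)$ and $\omega(u_6)=(3,3)$; the reason they must be grouped is that inadmissible permutations of $u_6$ reduce to admissible monomials lying in the $\Sigma_5$-orbit of $u_5$, not that some $g_j$ permutes the two orbits directly.

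Your claim that $u_1,\ldots,u_4$ lie in pairwise distinct weight-or-$P_5^0/P_5^+$ components is also false: $u_1,u_3,u_5$ all lie in $P_5^0$ with weight $(3,1,1)$, and $u_2,u_6$ both lie in $P_5^0$ with weight $(3,3)$. The only separation that is genuinely automatic is $\langle[\Sigma_5(u_4)]\rangle\subset QP_5^+$ versus the remaining blocks in $QP_5^0$. For each of the $P_5^0$-blocks you must verify that every inadmissible monomial in $\Sigma_5(u_i)$ reduces (modulo $\mathcal A^+P_5$) only to admissible monomials whose exponent multiset stays in the prescribed block. For $u_2=x_1^3x_2^3x_3^3$ this is immediate because every permutation is a spike and hence already admissible; for $u_1,u_3,u_5,u_6$ it is a genuine case-by-case computation on the full admissible basis of $(QP_5^0)_9$, not on one weight piece at a time.
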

\begin{lems}\label{bdss2}\

{\rm i)} $\langle[\Sigma_5(u_i)]\rangle ^{\Sigma_5}= \langle [p(u_i)]\rangle,\ i= 1,2,3$.

{\rm ii)} $\langle[\Sigma_5(u_4)]\rangle^{\Sigma_5}= \langle [p_1], [p_2]\rangle$, where
\begin{align*}
p_1 &= x_1x_2x_3x_4^{2}x_5^{4} + x_1x_2x_3^{2}x_4x_5^{4} + x_1x_2x_3^{2}x_4^{4}x_5 \\
&\quad+ x_1x_2^{2}x_3x_4x_5^{4} + x_1x_2^{2}x_3x_4^{4}x_5 + x_1x_2^{2}x_3^{4}x_4x_5,\\
p_2 &= x_1x_2x_3x_4^{2}x_5^{4} + x_1x_2x_3^{2}x_4x_5^{4} + x_1x_2x_3^{2}x_4^{4}x_5 + x_1x_2x_3^{2}x_4^{2}x_5^{3} + x_1x_2x_3^{2}x_4^{3}x_5^{2}\\
&\quad + x_1x_2x_3^{3}x_4^{2}x_5^{2} + x_1x_2^{2}x_3x_4^{2}x_5^{3} + x_1x_2^{2}x_3x_4^{3}x_5^{2} + x_1x_2^{2}x_3^{2}x_4x_5^{3} + x_1x_2^{2}x_3^{2}x_4^{3}x_5\\
&\quad + x_1x_2^{2}x_3^{3}x_4x_5^{2} + x_1x_2^{2}x_3^{3}x_4^{2}x_5 + x_1x_2^{3}x_3x_4^{2}x_5^{2} + x_1x_2^{3}x_3^{2}x_4x_5^{2} + x_1x_2^{3}x_3^{2}x_4^{2}x_5\\
&\quad + x_1^{3}x_2x_3x_4^{2}x_5^{2} + x_1^{3}x_2x_3^{2}x_4x_5^{2} + x_1^{3}x_2x_3^{2}x_4^{2}x_5.
\end{align*}

{\rm iii)} $ \langle[\Sigma_5(u_5,u_6)]\rangle^{\Sigma_5} = \langle [p_3]\rangle$, where
\begin{align*}
p_3 &= x_2x_3x_4x_5^{6} + x_2x_3x_4^{6}x_5 + x_2x_3^{6}x_4x_5 + x_1x_3x_4x_5^{6} + x_1x_3x_4^{6}x_5 + x_1x_3^{6}x_4x_5\\
&\quad + x_1x_2x_4x_5^{6} + x_1x_2x_4^{6}x_5 + x_1x_2x_3x_5^{6} + x_1x_2x_3x_4^{6} + x_1x_2x_3^{6}x_5 + x_1x_2x_3^{6}x_4\\
&\quad + x_1x_2^{6}x_4x_5 + x_1x_2^{6}x_3x_5 + x_1x_2^{6}x_3x_4 + x_2^{3}x_3x_4x_5^{4} + x_2^{3}x_3x_4^{4}x_5 + x_2^{3}x_3^{4}x_4x_5\\
&\quad + x_1^{3}x_3x_4x_5^{4} + x_1^{3}x_3x_4^{4}x_5 + x_1^{3}x_3^{4}x_4x_5 + x_1^{3}x_2x_4x_5^{4} + x_1^{3}x_2x_4^{4}x_5 + x_1^{3}x_2x_3x_5^{4}\\
&\quad + x_1^{3}x_2x_3x_4^{4} + x_1^{3}x_2x_3^{4}x_5 + x_1^{3}x_2x_3^{4}x_4 + x_1^{3}x_2^{4}x_4x_5 + x_1^{3}x_2^{4}x_3x_5 + x_1^{3}x_2^{4}x_3x_4.
\end{align*}
\end{lems}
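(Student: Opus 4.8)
The strategy is entirely computational and rests on the explicit bases produced in Lemmas \ref{bdday1}--\ref{bdss1}. Since $\mathrm{Ker}(\widetilde{Sq}^0_*)_{(5,2)}$ decomposes as a direct sum of $\Sigma_5$-submodules $\langle[\Sigma_5(u_i)]\rangle$ for $i=1,2,3$, plus $\langle[\Sigma_5(u_4)]\rangle$, plus $\langle[\Sigma_5(u_5,u_6)]\rangle$, it suffices to compute the $\Sigma_5$-invariants of each summand separately. For each summand I would proceed as follows. First, I would list the explicit monomial basis $[B(u_i)]$ (respectively $[B(u_5,u_6)]$) for the $\Sigma_5$-submodule; these are exactly the admissible monomials of degree $9$ lying in the $\Sigma_5$-orbit(s) of the generator(s), and they are tabulated via Propositions \ref{mds1}, \ref{mds2} and the degree-$9$ admissible monomial list referenced in Section \ref{s7}. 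Then a general element is a linear combination $f=\sum c_z[z]$ with unknown coefficients $c_z\in\mathbb F_2$. Imposing $g_i(f)\equiv f$ for $i=1,2,3,4$ (the generators of $\Sigma_5$ among $g_1,\dots,g_4$; note $\Sigma_5$ is generated by $g_1,\dots,g_4$) translates, after rewriting each $g_i(z)$ in terms of the admissible basis modulo $\mathcal A^+P_5$, into a homogeneous linear system over $\mathbb F_2$ in the $c_z$. Solving this system yields the space of invariants.

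Carrying this out for $i=1,2,3$: each orbit $\Sigma_5(u_i)$ of a monomial with all-distinct "shape" contributes, after taking admissibility into account, a transitive-enough $\Sigma_5$-set that the only invariant is the full orbit sum. One checks directly that $p(u_i)=\sum_{y\in B_5(9)\cap\Sigma_5(u_i)}y$ is $\Sigma_5$-invariant (since permuting variables permutes the summands, and the admissibility condition is $\Sigma_5$-stable on these orbits) and that it spans the invariants, because any $\Sigma_5$-fixed vector in the permutation module on these basis monomials must be constant on $\Sigma_5$-orbits of basis elements, and there is a single such orbit up to the identifications forced by the hit relations. For $i=1,2,3$ this gives $\langle[\Sigma_5(u_i)]\rangle^{\Sigma_5}=\langle[p(u_i)]\rangle$, a one-dimensional space.

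For $\langle[\Sigma_5(u_4)]\rangle$ with $u_4=x_1x_2x_3^2x_4^2x_5^3$, and for $\langle[\Sigma_5(u_5,u_6)]\rangle$, the orbit structure is richer because several distinct monomial shapes appear and the hit relations identify some orbit sums with others; here the linear system genuinely has to be solved. For $u_4$ one finds a two-dimensional invariant space, spanned by the two explicit polynomials $p_1,p_2$ written in the statement; one verifies directly that $g_i(p_1)\equiv p_1$ and $g_i(p_2)\equiv p_2$ for $i=1,\dots,4$ using the Cartan formula and the admissibility reductions already established (e.g.\ the relations of the form $x_ix_j^4x_\ell^2x_ux_v\equiv x_ix_j^2x_\ell^4x_ux_v$ modulo lower weight), and that $[p_1],[p_2]$ are linearly independent in $QP_5$ by exhibiting monomials distinguishing them. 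Similarly for $\langle[\Sigma_5(u_5,u_6)]\rangle$ one checks $p_3$ is invariant and spans. The main obstacle — and the only place requiring care rather than bookkeeping — is the $u_4$ and $u_5,u_6$ cases: one must correctly rewrite every $g_i(z)$ in the admissible basis (this is where subtle hit relations among degree-$9$ monomials of weight vectors $(3,1,1)$ and $(3,3)$ enter), keep track of cancellations mod $\mathcal A^+P_5$, and solve the resulting $\mathbb F_2$-linear system without error; the dimension count ($1,1,1,2,1$, totalling $6$) is the sanity check that the computation is complete.
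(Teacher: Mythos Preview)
Your approach is the same as the paper's: list an explicit admissible basis for each $\Sigma_5$-summand, write a generic element $f=\sum_t\gamma_t v_t$, impose $g_i(f)\equiv f$ for $i=1,\dots,4$, and solve the resulting $\mathbb F_2$-linear system. The paper carries this out in full detail for part~(ii) (exhibiting the $21$ basis monomials $v_1,\dots,v_{21}$ of $\langle[\Sigma_5(u_4)]\rangle$ and working through $g_1,\dots,g_4$ successively) and states that parts~(i) and~(iii) follow by an analogous computation.

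One caveat on your shortcut for part~(i): the claim that ``the admissibility condition is $\Sigma_5$-stable on these orbits,'' so that each $\langle[\Sigma_5(u_i)]\rangle$ is a genuine permutation module, fails for $u_3=x_1x_2^3x_3^5$. Only half of the $\Sigma_5$-orbit of $u_3$ consists of admissible monomials (for instance $x_1^5x_2^3x_3$ is inadmissible), so applying a transposition to an admissible basis element and reducing modulo $\mathcal A^+P_5$ can yield a nontrivial linear combination rather than a single basis element. The conclusion $\langle[\Sigma_5(u_3)]\rangle^{\Sigma_5}=\langle[p(u_3)]\rangle$ is correct, but it requires the same linear-system method you outline for parts~(ii) and~(iii), not the orbit-sum argument. (For $u_1$ and $u_2$ the entire $\Sigma_5$-orbit is admissible, so your permutation-module reasoning does apply there.)
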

\begin{proof} We prove the Part (ii) of the lemma. The others can be proved by a similar computation. From Propositions \ref{mds1} and \ref{mds2}, we see that $\dim\langle[\Sigma_5(u_4)]\rangle =21$ with a basis consisting of the classes 
represented by the following monomials:
\begin{align*}&v_{1} =  x_1x_2x_3x_4^{2}x_5^{4},\  v_{2} =  x_1x_2x_3^{2}x_4x_5^{4},\  v_{3} =  x_1x_2x_3^{2}x_4^{4}x_5,\  v_{4} =  x_1x_2^{2}x_3x_4x_5^{4}, \\ & v_{5} =  x_1x_2^{2}x_3x_4^{4}x_5,\  v_{6} =  x_1x_2^{2}x_3^{4}x_4x_5,\  v_{7} =  x_1x_2x_3^{2}x_4^{2}x_5^{3},\  v_{8} =  x_1x_2x_3^{2}x_4^{3}x_5^{2}, \\ & v_{9} =  x_1x_2x_3^{3}x_4^{2}x_5^{2},\  v_{10} =  x_1x_2^{2}x_3x_4^{2}x_5^{3},\  v_{11} =  x_1x_2^{2}x_3x_4^{3}x_5^{2},\  v_{12} =  x_1x_2^{2}x_3^{2}x_4x_5^{3}, \\ & v_{13} =  x_1x_2^{2}x_3^{2}x_4^{3}x_5,\  v_{14} =  x_1x_2^{2}x_3^{3}x_4x_5^{2},\  v_{15} =  x_1x_2^{2}x_3^{3}x_4^{2}x_5,\  v_{16} =  x_1x_2^{3}x_3x_4^{2}x_5^{2}, \\ & v_{17} =  x_1x_2^{3}x_3^{2}x_4x_5^{2},\  v_{18} =  x_1x_2^{3}x_3^{2}x_4^{2}x_5,\  v_{19} =  x_1^{3}x_2x_3x_4^{2}x_5^{2},\  v_{20} =  x_1^{3}x_2x_3^{2}x_4x_5^{2}, \\ & v_{21} =  x_1^{3}x_2x_3^{2}x_4^{2}x_5. 
\end{align*}
Suppose $f = \sum_{t=1}^{21}\gamma_tv_t$ with $\gamma_t \in \mathbb F_2$ and $[f] \in \langle[\Sigma_5(u_4)]\rangle^{\Sigma_5}$. By a direct computation, we have
\begin{align*} g_1(f) + f &\equiv (\gamma_{4}  + \gamma_{5} + \gamma_{10}  + \gamma_{11}) v_{1} +  (\gamma_{4}  + \gamma_{6} + \gamma_{12}  + \gamma_{14}) v_{2}\\
&\quad +   (\gamma_{5}  + \gamma_{6} + \gamma_{13}  + \gamma_{15}) v_{3} +  (\gamma_{10}  + \gamma_{12}) v_{7} +  (\gamma_{11}  + \gamma_{13}) v_{8}\\
&\quad +  (\gamma_{14}  + \gamma_{15}) v_{9} +  (\gamma_{16}  + \gamma_{19}) v_{16} +  (\gamma_{17}  + \gamma_{20}) v_{17} +  (\gamma_{18}  + \gamma_{21}) v_{18}\\
&\quad +  (\gamma_{16}  + \gamma_{19}) v_{19} +  (\gamma_{17}  + \gamma_{20}) v_{20} +  (\gamma_{18}  + \gamma_{21}) v_{21} \equiv 0,\\
\end{align*}
This relation implies
\begin{align}\label{ct1} \begin{cases}\gamma_{4}  + \gamma_{5} + \gamma_{10}  + \gamma_{11}= \gamma_{4}  + \gamma_{6} + \gamma_{12}  + \gamma_{14} = \gamma_{5}  + \gamma_{6} + \gamma_{13}  + \gamma_{15} =0\\
\gamma_{10} = \gamma_{12},\ \gamma_{11} = \gamma_{13},\ \gamma_{14} = \gamma_{15},\ \gamma_{16} = \gamma_{19},\ \gamma_{17}  = \gamma_{20},\ \gamma_{18}  = \gamma_{21}. \end{cases}
\end{align}
With the aid of (\ref{ct1}), we have
\begin{align*} g_2(f) + f &\equiv 
(\gamma_{17}  + \gamma_{18}) v_{1} + 
(\gamma_{2}  + \gamma_{4} + \gamma_{17}) v_{2} + 
(\gamma_{3}  + \gamma_{5} + \gamma_{18}) v_{3} \\ &\quad + 
(\gamma_{2}  + \gamma_{4} + \gamma_{17}) v_{4} + 
(\gamma_{3}  + \gamma_{5} + \gamma_{18}) v_{5} + 
(\gamma_{7}  + \gamma_{10}) v_{7}\\ &\quad + 
(\gamma_{8}  + \gamma_{11}) v_{8} + 
(\gamma_{9}  + \gamma_{16}) v_{9} + 
(\gamma_{7}  + \gamma_{10}) v_{10} + 
(\gamma_{8}  + \gamma_{11}) v_{11}\\ &\quad + 
(\gamma_{14}  + \gamma_{17}) v_{14} + 
(\gamma_{14}  + \gamma_{18}) v_{15} + 
(\gamma_{9}  + \gamma_{16}) v_{16}\\ &\quad + 
(\gamma_{14}  + \gamma_{17}) v_{17} + 
(\gamma_{14}  + \gamma_{18}) v_{18} + 
(\gamma_{17}  + \gamma_{18}) v_{19} \equiv 0.
\end{align*}
From the last equality, we get 
\begin{align}\label{ct2} \begin{cases}
\gamma_{2}  + \gamma_{4} + \gamma_{14}= 
\gamma_{3}  + \gamma_{5} + \gamma_{14} = 0,\\
\gamma_{2}  + \gamma_{4} + \gamma_{14}= 
\gamma_{3}  + \gamma_{5} + \gamma_{14} = 0 ,\\
\gamma_{14} = \gamma_{18},\ 
\gamma_{7} = \gamma_{10},\  
\gamma_{8}  = \gamma_{11},\ 
\gamma_{9}  = \gamma_{16},\ 
\gamma_{14}  = \gamma_{17}. 
\end{cases}
\end{align}
By a direct computation using (\ref{ct1}) and (\ref{ct2}), we obtain
\begin{align*} g_3(f) + f &\equiv 
(\gamma_{1}  + \gamma_{2}) v_{1} + 
(\gamma_{1}  + \gamma_{2}) v_{2} + 
(\gamma_{5}  + \gamma_{6}) v_{5} + 
(\gamma_{5}  + \gamma_{6}) v_{6}\\ &\quad + 
(\gamma_{8}  + \gamma_{9}) v_{8} + 
(\gamma_{8}  + \gamma_{9}) v_{9} + 
(\gamma_{8}  + \gamma_{14}) v_{11} + 
(\gamma_{8}  + \gamma_{14}) v_{13}\\ &\quad + 
(\gamma_{8}  + \gamma_{14}) v_{14} + 
(\gamma_{8}  + \gamma_{14}) v_{15} + 
(\gamma_{9}  + \gamma_{14}) v_{16}\\ &\quad + 
(\gamma_{9}  + \gamma_{14}) v_{17} + 
(\gamma_{9}  + \gamma_{14}) v_{19} + 
(\gamma_{9}  + \gamma_{14}) v_{20} \equiv 0. 
\end{align*}
This implies
\begin{align}\label{ct3} 
\gamma_{1}  + \gamma_{2}= 
\gamma_{5}  + \gamma_{6} = 
\gamma_{8}  + \gamma_{9} = 
\gamma_{8}  + \gamma_{14} = 0. 
\end{align}
By using (\ref{ct1}), (\ref{ct2}) and (\ref{ct3}), we have
\begin{align*} g_4(f) + f &\equiv 
(\gamma_{1}  + \gamma_{3}) (v_{2} + v_{3}) + 
(\gamma_{4}  + \gamma_{5})(v_{4} + v_{5})\\ &\quad + 
(\gamma_{7}  + \gamma_{8})( v_{7} + v_{8} +  v_{10} + v_{11} +  v_{12} +  v_{13}) \equiv 0.
\end{align*}
From this one gets
\begin{align}\label{ct4} 
\gamma_{1}  + \gamma_{3}= \gamma_{4}  + \gamma_{5}= \gamma_{7}  + \gamma_{8}=  0.
\end{align}
Part ii) of the lemma follows from (\ref{ct1})-(\ref{ct4}). 
\end{proof}

Now, we prove the second part of Theorem \ref{pthm2} for $s = 1$.

Let $f \in (P_5)_9$ such that $[f] \in (QP_5)_{9}^{GL_5}$. Since $[f] \in (QP_5)_{9}^{\Sigma_5}$, using  Lemmas \ref{bdss1} and \ref{bdss2}, we have 
$$f \equiv \gamma_1p(u_1) + \gamma_2p(u_2) + \gamma_3p(u_3) + \gamma_4p_1 + \gamma_5p_2 + \gamma_6p_3,$$
with $\gamma_j \in \mathbb F_2$.
By computing $g_5(f)+f$ in terms of the admissible monomials, we obtain
\begin{align*}g_5(f)+f &\equiv \gamma_1x_2x_4x_5^{7} + \gamma_2x_2^{3}x_3^{3}x_4^{3} + \gamma_3x_2x_4^{3}x_5^{5} + (\gamma_4 + \gamma_5 + \gamma_6) x_2^{3}x_3x_4x_5^{4}\\
&\quad +\gamma_5x_2x_3x_4^{2}x_5^{5} + (\gamma_5 + \gamma_6)x_2x_3x_4x_5^{6} + \text{ other terms} \equiv 0.
\end{align*}
This relation implies $\gamma_j = 0$ for $1 \leqslant j \leqslant 6$. Theorem \ref{pthm2} is proved for $s = 1$.

\medskip
\subsection{The admissible monomials of degree $10$ in $P_5$}\label{subs41}\

\medskip
To prove Theorem \ref{pthm2} for $s=2$, we need to determine all the admissible monomials of degree 10 in $P_5$.

\begin{lems}\label{bdday2}
If $x$  is an admissible monomial of degree $10$ in $P_5$, then $\omega(x)$ is one of the following sequences:
$ (2,2,1), \ (2,4),\ (4,1,1), \ (4,3).$
\end{lems}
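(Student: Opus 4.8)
The claim is that any admissible monomial of degree $10$ in $P_5$ has weight vector in the list $(2,2,1)$, $(2,4)$, $(4,1,1)$, $(4,3)$. The plan is to argue exactly as in Lemma \ref{bdday1}, but now tracking all possible $\omega_1$ rather than only those that lie in the kernel of Kameko's map. First I would identify the minimal spike $z$ of degree $10$ in $P_5$: since $10 = 7 + 3 = (2^3-1)+(2^2-1)$ we have $\mu(10) = 2$, and the minimal spike is $z = x_1^7 x_2^3$ with $\omega(z) = (2,2,1)$. By Theorem \ref{dlsig}, for an admissible (hence nonhit) monomial $x$ we must have $\omega(x) \geqslant \omega(z) = (2,2,1)$ in the left lexicographic order; in particular $\omega_1(x) \in \{2,4\}$, because $\omega_1(x) \equiv 10 \equiv 0 \pmod 2$ and $0 \leqslant \omega_1(x) \leqslant 5$, and $\omega_1(x) = 0$ would force $x$ hit while $\omega_1(x) \geqslant \omega_1(z) = 2$.

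Next I would split into the two cases. If $\omega_1(x) = 4$, write $x = X_{\mathbb J} y^2$ where $X_{\mathbb J}$ is the product of four of the five variables and $\deg y = 3$; by Theorem \ref{dlcb1}(i), since $X_{\mathbb J}$ contributes the single nonzero entry in its weight vector and $x$ is admissible, $y$ must be admissible of degree $3$ in $P_5$. The admissible monomials of degree $3$ have weight vector either $(1,1)$ or $(3,0)$ (as already used in the proof of Lemma \ref{bdday1}), so $\omega(x) = (4,1,1)$ or $\omega(x) = (4,3)$. If $\omega_1(x) = 2$, write $x = x_i x_j y^2$ with $i < j$ and $y$ admissible of degree $4$ in $P_5$ (again by Theorem \ref{dlcb1}(i)). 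Now I need the possible weight vectors of an admissible degree-$4$ monomial in $P_5$: these are $(2,1)$ and $(4,0)$ — the spike $x_1^3 x_2$ giving $(2,1)$, and $x_1 x_2 x_3 x_4$ giving $(4,0)$, while anything with $\omega_1 < 2$ is hit by Theorem \ref{dlsig} since the minimal spike of degree $4$ is $x_1^3 x_2$ with weight vector $(2,1)$, and $\omega_1 = 0$ is excluded as $4$ is even and positive. Doubling shifts the weight vector, giving $\omega(x) = (2,2,1)$ or $\omega(x) = (2,4)$ respectively. Combining the two cases yields exactly the four listed weight vectors.

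The only mildly delicate point — and the step I would be most careful about — is the enumeration of admissible weight vectors in degree $3$ and degree $4$ for $P_5$, since Theorem \ref{dlcb1}(i) is being applied and one must be sure no smaller value of $\omega_1$ survives. This is handled cleanly by Theorem \ref{dlsig}: the relevant minimal spikes are $x_1^3 x_2$ (degree $4$, $\omega = (2,1)$) and $x_1 x_2 x_3$ (degree $3$, $\omega = (1,1)$), so admissibility forces $\omega_1 \geqslant 1$ in degree $3$ and $\omega_1 \geqslant 2$ in degree $4$, and parity plus the bound $\omega_1 \leqslant 5$ pins down the remaining possibilities. No genuinely new computation is needed beyond what already appears in the proof of Lemma \ref{bdday1}; the argument is a routine bookkeeping variant of it.
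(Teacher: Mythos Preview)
Your argument is correct and essentially identical to the paper's: identify the minimal spike $x_1^7x_2^3$ with $\omega=(2,2,1)$, use Theorem~\ref{dlsig} to force $\omega_1(x)\in\{2,4\}$, factor accordingly, and invoke Theorem~\ref{dlcb1}(i) to reduce to the admissible weight vectors in degrees $3$ and $4$. One slip: the minimal spike of degree $3$ is $x_1^3$, not $x_1x_2x_3$ (the latter has $\omega=(3)$), though since you correctly record its weight vector as $(1,1)$ and use only that, the argument is unaffected. The only other difference is cosmetic --- the paper cites \cite{su1} for the degree-$4$ classification whereas you rederive it from Theorem~\ref{dlsig}, which is arguably cleaner.
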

\begin{proof}  Observe that $z = x_1^{7}x_2^3$ is the minimal spike of degree $10$ in $P_5$ and  $\omega(z) =  (2,2,1)$. Since $[x] \ne 0$, by Theorem \ref{dlsig}, either $\omega_1(x) =2$ or $\omega_1(x) =4$. If $\omega_1(x) =2$, then $x = x_ix_j y^2$ with $y$ a monomial of degree $4$ in $P_5$ and $i<j$. Since $x$ is admissible, by Theorem \ref{dlcb1}, $y$ is admissible and $y \in P_5^0$.  Using a result in \cite{su1}, one gets either $\omega(y) = (2,1)$ or $\omega(y) = (4,0)$. If $\omega_1(x) =4$, then $x = X_jy_1^2$ with $y_1$ a monomial of degree $3$ in $P_5$. Since $y_1$ is admissible, we see that either $\omega(y_1) =(1,1)$ or $\omega(y_1) =(3,0)$. The lemma is proved.
\end{proof}
 From this lemma and a result in \cite{su1}, we have
\begin{align*}(QP_5)_{10} &= (QP_5^0)_{10} \bigoplus QP_5^+(2,2,1)\\
&\quad \bigoplus QP_5^+(2,4) \bigoplus QP_5^+(4,1,1) \bigoplus QP_5^+(4,3).
\end{align*}

Using a result in \cite{su1}, we have $\dim (QP_5^0)_{10} = 230$. 

\begin{props}\label{mdb10} \

\medskip
{\rm i)} $B_5^+(2,2,1) = \{x_1x_2x_3^{2}x_4^{2}x_5^{4}, x_1x_2x_3^{2}x_4^{4}x_5^{2},  x_1x_2^{2}x_3x_4^{2}x_5^{4},  x_1x_2^{2}x_3x_4^{4}x_5^{2}, x_1x_2^{2}x_3^{4}x_4x_5^{2}\}.$

{\rm ii)} $B_5^+(2,4) = \{x_1x_2^{2}x_3^{2}x_4^{2}x_5^{3},  x_1x_2^{2}x_3^{2}x_4^{3}x_5^{2},  x_1x_2^{2}x_3^{3}x_4^{2}x_5^{2},  x_1x_2^{3}x_3^{2}x_4^{2}x_5^{2},  x_1^{3}x_2x_3^{2}x_4^{2}x_5^{2}\}.$

{\rm iii)} $B_5^+(4,1,1)$ is the set of the following monomials:
 \begin{align*} 
&x_1x_2x_3x_4x_5^{6},\    x_1x_2x_3x_4^{6}x_5,\    x_1x_2x_3^{6}x_4x_5,\    x_1x_2^{6}x_3x_4x_5,\    x_1x_2x_3x_4^{2}x_5^{5},\  \\ & x_1x_2x_3^{2}x_4x_5^{5},\    x_1x_2x_3^{2}x_4^{5}x_5,\    x_1x_2^{2}x_3x_4x_5^{5},\    x_1x_2^{2}x_3x_4^{5}x_5,\    x_1x_2^{2}x_3^{5}x_4x_5,\  \\ & x_1x_2x_3x_4^{3}x_5^{4},\    x_1x_2x_3^{3}x_4x_5^{4},\    x_1x_2x_3^{3}x_4^{4}x_5,\    x_1x_2^{3}x_3x_4x_5^{4},\    x_1x_2^{3}x_3x_4^{4}x_5,\  \\ & x_1x_2^{3}x_3^{4}x_4x_5,\    x_1^{3}x_2x_3x_4x_5^{4},\    x_1^{3}x_2x_3x_4^{4}x_5,\    x_1^{3}x_2x_3^{4}x_4x_5,\    x_1^{3}x_2^{4}x_3x_4x_5.
\end{align*} 

\noindent
{\rm iv)} $B_5^+(4,3)$ is the set of the following monomials:
 \begin{align*} 
&x_1x_2x_3^{2}x_4^{3}x_5^{3},\    x_1x_2x_3^{3}x_4^{2}x_5^{3},\    x_1x_2x_3^{3}x_4^{3}x_5^{2},\    x_1x_2^{2}x_3x_4^{3}x_5^{3},\    x_1x_2^{2}x_3^{3}x_4x_5^{3},\  \\ 
& x_1x_2^{2}x_3^{3}x_4^{3}x_5,\    x_1x_2^{3}x_3x_4^{2}x_5^{3},\    x_1x_2^{3}x_3x_4^{3}x_5^{2},\    x_1x_2^{3}x_3^{2}x_4x_5^{3},\    x_1x_2^{3}x_3^{2}x_4^{3}x_5,\  \\ 
& x_1x_2^{3}x_3^{3}x_4x_5^{2},\    x_1x_2^{3}x_3^{3}x_4^{2}x_5,\    x_1^{3}x_2x_3x_4^{2}x_5^{3},\    x_1^{3}x_2x_3x_4^{3}x_5^{2},\    x_1^{3}x_2x_3^{2}x_4x_5^{3},\  \\ 
& x_1^{3}x_2x_3^{2}x_4^{3}x_5,\    x_1^{3}x_2x_3^{3}x_4x_5^{2},\    x_1^{3}x_2x_3^{3}x_4^{2}x_5,\    x_1^{3}x_2^{3}x_3x_4x_5^{2},\    x_1^{3}x_2^{3}x_3x_4^{2}x_5.
\end{align*} 
\end{props}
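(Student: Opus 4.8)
The statement to prove is Proposition~\ref{mdb10}, which lists explicitly the sets $B_5^+(\omega)$ of admissible monomials in $P_5^+$ for the four weight vectors $\omega \in \{(2,2,1),(2,4),(4,1,1),(4,3)\}$ in degree $10$.

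The plan is to treat each weight vector separately, and for each one to (a) enumerate all monomials $x \in P_5^+$ with $\omega(x) = \omega$, (b) discard those that are inadmissible by exhibiting an explicit straightening relation $x \equiv \sum y_t$ with $y_t < x$ modulo $P_5^-(\omega)$, and (c) verify that the surviving monomials represent linearly independent classes in $QP_5(\omega)$, so that none of them can be hit and the list is exactly $B_5^+(\omega)$. For step (a), a monomial in $P_5^+$ has all five exponents positive, so writing $x = \prod_{t\geqslant 0} X_{\mathbb J_t(x)}^{2^t}$ the constraint $\omega(x) = \omega$ together with positivity of every $\nu_j(x)$ severely restricts the possible exponent vectors; one lists them up to the $\Sigma_5$-action (using $\Sigma_5(z)$ notation) and then within each orbit. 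For step (b), the main tools are the Cartan formula applied to $Sq^1$ and $Sq^2$ (as already illustrated in the proof of Proposition~\ref{mds1}) and, crucially, Theorem~\ref{dlcb1}: if a small monomial $w$ in fewer variables is (strictly) inadmissible, then $x w^{2^r}$ inherits inadmissibility. So I would first record the inadmissible monomials of the relevant low degrees in $P_{\leqslant 4}$ (degrees $2,3,4,5$), which are standard, and propagate them. For step (c), linear independence of the proposed classes is checked by a direct computation: apply the admissibility/straightening algorithm to a general linear combination and show the only relation is trivial; since the lists are short ($5$, $5$, $20$, $20$ monomials), this is finite and mechanical.

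Concretely, for $\omega = (2,2,1)$: a monomial $x \in P_5^+$ with this weight vector has $\deg x = 2 + 4 + 4 = 10$ and must have exactly two exponents with a $1$ in bit $0$, two with a $1$ in bit $1$, one with a $1$ in bit $2$, with each exponent positive — forcing exponent multiset $\{1,1,2,2,4\}$ in some order, i.e. $x = x_ix_j x_\ell^2 x_u^2 x_v^4$ with $i<j$, $\ell<u$. There are $\binom{5}{1}\binom{4}{2} = 30$ such monomials; one shows via $Sq^2$-relations (permuting the positions of the $2$'s relative to the $4$, as in the proof of Proposition~\ref{mds1}) and $Sq^1$-relations that all but the five listed are inadmissible, then checks those five are independent in $QP_5$. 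The cases $\omega=(2,4)$ (exponent multiset $\{1,2,2,2,3\}$, analogous to Proposition~\ref{mds2}), $\omega=(4,1,1)$ (exponent multisets $\{1,1,1,1,6\}$, $\{1,1,1,2,5\}$, $\{1,1,1,3,4\}$), and $\omega=(4,3)$ (exponent multisets $\{1,1,2,3,3\}$, $\{1,1,3,2,3\}$ etc., namely those with bits giving weight $(4,3)$) are handled the same way, with more monomials to screen; Theorem~\ref{dlcb1}(ii) applied to strictly inadmissible degree-$5$ monomials like $x_1x_2^2x_3^2$-type blocks does most of the elimination for $(4,1,1)$ and $(4,3)$.

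The main obstacle is purely the bookkeeping volume in the cases $(4,1,1)$ and $(4,3)$: there are many monomials to enumerate and for each inadmissible one a specific Cartan-formula identity must be produced, while for the $40$ surviving monomials linear independence in $QP_5$ must be verified — this is where the proof becomes "long and very technical" as the introduction warns. There is no conceptual difficulty: every elimination is an explicit $Sq^1$ or $Sq^2$ computation or an application of Theorem~\ref{dlcb1}, and independence is a finite rank computation. I would organize the write-up by stating, for each $\omega$, the full list of candidate monomials grouped by $\Sigma_5$-orbit, then give representative straightening relations for one monomial per orbit-of-inadmissibles (the rest following by symmetry), and finally invoke a direct computation for the independence of $[B_5^+(\omega)]$. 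The counts $|B_5^+(2,2,1)| = |B_5^+(2,4)| = 5$ and $|B_5^+(4,1,1)| = |B_5^+(4,3)| = 20$ then feed, together with $\dim(QP_5^0)_{10} = 230$, into the dimension count $\dim(QP_5)_{10} = 230+5+5+20+20 = 280$ used later in the $s=2$ case.
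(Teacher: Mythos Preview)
Your proposal is correct and follows essentially the same approach as the paper: enumerate candidate monomials by weight vector, eliminate inadmissibles via Theorem~\ref{dlcb1} and Cartan-formula relations, then verify linear independence of the survivors. The paper organizes this slightly more systematically by first isolating a short list of strictly inadmissible ``building blocks'' in a separate lemma (Lemma~\ref{inad10}) and then, for each candidate $x_ix_jy^2$ with $y\in B_5(4)$, checking whether it factors as $wz^{2^r}$ for some $w$ on that list; for the independence step the paper uses the projections $p_{(i;j)}:P_5\to P_4$ rather than a straightening computation, which is cleaner but equivalent in effect.
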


From the this proposition and a result in \cite{su1}, we get $\dim(QP_5)_{10} = 280.$ 

The proposition is proved by using Theorems \ref{dlcb1}, \ref{dlsig} and the following.

\begin{lems}\label{inad10} The following monomials are strictly inadmissible:

\medskip
{\rm i)} $x_j^2x_\ell x_t^3,\ j< \ell$; $x_j^2x_\ell x_tx_u^2,\ j < \ell < t$; $x_jx_\ell^2 x_t^2x_u, \ j < \ell < t < u$; $x_1^2x_2x_3x_4x_5$.

{\rm ii)}  $x_j^3x_\ell^4 x_t^3,\ j< \ell<t$; $x_j^2x_\ell^2 x_t^3x_u^3,\ j< \ell<t$; $x_j^2x_\ell x_t^2x_u^2x_v^3,\ j< \ell<t<u$; $x_j^2x_\ell x_tx_u^3x_v^3,\ j< \ell<t$.

Here $(j,\ell,t,u,v)$ is a permutation of $(1,2,3,4,5)$.
\end{lems}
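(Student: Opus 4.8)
The plan is to establish strict inadmissibility of each monomial in the list in the usual way: by producing, explicitly, an equation
$$x \;=\; \sum_{t} y_t \;+\; \sum_{i} Sq^{2^i}(h_i)$$
in $P_k$ in which \emph{every} monomial $y_t$ is strictly smaller than $x$ in the order of Definition \ref{defn3} (compare weight vectors first, exponent vectors second). All of these equations reduce to the Cartan formula together with $Sq^j(x_\ell^a)=\binom{a}{j}x_\ell^{a+j}$; in particular, $Sq^1$ acts as the derivation sending a monomial $m$ to $\sum_{\nu_\ell(m)\ \mathrm{odd}}x_\ell m$. Only $Sq^1$ and $Sq^2$ will occur, and since every $x$ in the list has $\max\{i:\omega_i(x)\neq 0\}\geq 2$, the operations used are automatically in the admissible range for strict inadmissibility.

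For Part (i) (degree $6$), three of the four families are killed by a single $Sq^1$ applied to the monomial with the offending exponent lowered by one. For $x_j^2x_\ell x_t^3$ with $j<\ell$,
$$x_j^2x_\ell x_t^3 \;=\; Sq^1(x_jx_\ell x_t^3) + x_jx_\ell^2x_t^3 + x_jx_\ell x_t^4,$$
where $x_jx_\ell^2x_t^3$ keeps the weight vector $(2,2)$ but its exponent vector first drops below that of $x$ at position $j$ (using $j<\ell$), while $x_jx_\ell x_t^4$ has the strictly smaller weight vector $(2,0,1)$; the families $x_j^2x_\ell x_tx_u^2$ ($j<\ell<t$) and $x_1^2x_2x_3x_4x_5$ are handled identically via $Sq^1(x_jx_\ell x_tx_u^2)$ and $Sq^1(x_1x_2x_3x_4x_5)$ (whose other four terms each keep $\omega=(4,1)$ and have exponent vector beginning with $1<2$). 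The family $x_jx_\ell^2x_t^2x_u$ ($j<\ell<t<u$) is the one where a single square is not enough — $Sq^1$ alone introduces a larger monomial with a leading square — and here one combines two operations:
$$x_jx_\ell^2x_t^2x_u + x_jx_\ell^2x_tx_u^2 + x_jx_\ell x_t^2x_u^2 \;=\; Sq^2(x_jx_\ell x_tx_u) + Sq^1(x_j^2x_\ell x_tx_u).$$
The three ``bad'' monomials $x_j^2x_\ell^2x_tx_u,\,x_j^2x_\ell x_t^2x_u,\,x_j^2x_\ell x_tx_u^2$ produced by $Sq^2$ are exactly those produced by $Sq^1(x_j^2x_\ell x_tx_u)$, so they cancel in pairs, leaving $x$ together with two strictly smaller monomials.

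Part (ii) (degree $10$) uses the same idea carried one step further. For $x_j^3x_\ell^4x_t^3$ with $j<\ell<t$: applying $Sq^1(x_j^3x_\ell^3x_t^3)$ gives $x_j^3x_\ell^4x_t^3\equiv x_j^4x_\ell^3x_t^3+x_j^3x_\ell^3x_t^4$; the second term is already smaller, and the ``bad'' $x_j^4x_\ell^3x_t^3$ is knocked down by $Sq^2(x_j^2x_\ell^3x_t^3)\equiv 0$, which rewrites it as $x_j^2x_\ell^5x_t^3+x_j^2x_\ell^4x_t^4+x_j^2x_\ell^3x_t^5$, all three with leading exponent $2<3$, hence smaller than $x$. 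For $x_j^2x_\ell^2x_t^3x_u^3$ ($j<\ell<t$), $Sq^1(x_jx_\ell^2x_t^3x_u^3)$ already suffices whenever the ``free'' variable $x_u$ is not the one with the smallest index, and in the remaining case an extra $Sq^2$-reduction of the error term $x_jx_\ell^2x_t^3x_u^4$ is needed; the two five-variable families $x_j^2x_\ell x_t^2x_u^2x_v^3$ and $x_j^2x_\ell x_tx_u^3x_v^3$ are treated by the same mixture of $Sq^1$ and $Sq^2$ identities, with a short case split according to the relative positions of the indices carrying the cubes.

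The routine part is the verification, for every monomial produced in every one of these identities and every case, that it really is $<x$ in the two-step order of Definition \ref{defn3}. The part I expect to be the genuine obstacle is the \emph{design} of the auxiliary monomials $h_i$: they must be chosen so that all monomials with weight vector equal to $\omega(x)$ and exponent vector not below $\sigma(x)$ cancel in pairs rather than being fed back into the reduction. For the monomials on this list one auxiliary $Sq^2$ always terminates the cascade, but this is special to them; in general what controls the bookkeeping is knowing which monomials of a given weight vector are already admissible — which is exactly the content of Proposition \ref{mdb10}, to be proved together with this lemma using Theorems \ref{dlcb1} and \ref{dlsig}.
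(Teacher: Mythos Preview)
Your approach is correct and is exactly what the paper means by ``straightforward'': explicit relations built from $Sq^1$ and $Sq^2$ applied to nearby monomials, followed by a check that every surviving term is smaller in the order of Definition~\ref{defn3}. Your Part~(i) is carried out cleanly and completely.

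One small remark on Part~(ii): you over-complicate. For instance, for $x_j^{2}x_\ell^{2}x_t^{3}x_u^{3}$ the single relation coming from $Sq^1(x_jx_\ell^{2}x_t^{3}x_u^{3})$ already does the job in \emph{every} case, since both leftover terms $x_jx_\ell^{2}x_t^{4}x_u^{3}$ and $x_jx_\ell^{2}x_t^{3}x_u^{4}$ have weight vector $(2,2,1)<(2,4)=\omega(x)$, regardless of where $u$ sits; no case split or auxiliary $Sq^2$ is needed. The same simplification applies to the two five-variable families. Also, the phrase ``$Sq^2(x_j^{2}x_\ell^{3}x_t^{3})\equiv 0$'' is a slip---you mean to use the \emph{relation} it produces, not that the element itself is hit. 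Finally, drop the closing paragraph: this lemma is an input to Proposition~\ref{mdb10}, not the other way around, and the design of the auxiliary $h_i$ here is just a two-line computation, not a ``genuine obstacle.''
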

The proof of this lemma is straightforward.

\begin{proof}[Proof of Proposition \ref{mdb10}] We prove the first part of the proposition. The others can be proved by a similar computation. We denote 
 \begin{align*} a_1 &= x_1x_2x_3^{2}x_4^{2}x_5^{4}, a_2 = x_1x_2x_3^{2}x_4^{4}x_5^{2},
 a_3 = x_1x_2^{2}x_3x_4^{2}x_5^{4}, \\ a_4 &= x_1x_2^{2}x_3x_4^{4}x_5^{2}, a_5 = x_1x_2^{2}x_3^{4}x_4x_5^{2}
\end{align*} 

 Let $x$ be an admissible monomial of degree 10 in $P_5$ such that $\omega(x) = (2,2,1)$. Then $x =  x_ix_jy^2$ with $1 \leqslant i < j \leqslant 5$ and $y$ a monomial of degree 4 in $P_5$. Since $x$ is admissible, according to Theorem \ref{dlcb1}, we have $y \in B_5(4)$. 

By a direct computation we that for all $y \in B_5(4)$, such that $ x_ix_jy^2 \ne a_u, \forall u, 1 \leqslant u \leqslant 5$, there is a monomial $w$ which is given in Lemma \ref{inad10}(i) such that $x_ix_jy^2 = wz^{2^{r}}$ with a monomial $z \in P_5$, and $r = \max\{t \in \mathbb Z : \omega_t(w) >0\}$. By Theorem \ref{dlcb1}, $x_ix_jy^2 $ is inadmissible. Since $x = x_ix_jy^2$ is admissible, one gets $x = a_u$ for suitable $u$. 

We now prove the set $\{[a_u],  1 \leqslant u \leqslant 5\}$ is linearly independent in $QP_5$. Suppose that $\mathcal S = \gamma_1a_1 + \gamma_2a_2 + \gamma_3a_3 + \gamma_4a_4 + \gamma_5a_5 \equiv 0$ with $\gamma_u \in \mathbb F_2$. By a simple computation using Theorem \ref{dlsig}, we have
\begin{align*}
p_{(1;2)}(\mathcal S) &\equiv \gamma_3x_1^3x_2x_3^2x_4^4 + \gamma_4x_1^3x_2x_3^4x_2 + \gamma_5x_1^3x_2^4x_3x_4^2 \equiv 0,\\
p_{(2;5)}(\mathcal S) &\equiv (\gamma_1+\gamma_3)x_1x_2x_3^2x_4^6 + \gamma_1x_1x_2^2x_3x_4^6  + \gamma_2x_1x_2^2x_3^4x_4^3 \equiv 0.
\end{align*}
These relations imply $\gamma_u = 0$ for all $u$. The first part of the proposition is proved.
\end{proof}

\subsection{The case $s=2$}\

\medskip
For $s=2$, we have $7.2^s-5 = 23$. Since Kameko's homomorphism 
$$(\widetilde{Sq}^0_*)_{(5,9)}: (QP_5)_{23} \longrightarrow (QP_5)_{9}$$
is an epimorphism, we have 
$(QP_5)_{23} \cong \text{Ker}(\widetilde{Sq}^0_*)_{(5,9)} \bigoplus (QP_5)_{9}$. Hence, we need to compute $\text{Ker}(\widetilde{Sq}^0_*)_{(5,9)}$.

\begin{lems}\label{bdds21}
If $x$  is an admissible monomial of degree $23$ in $P_5$ and $[x] \in \text{\rm Ker}(\widetilde{Sq}^0_*)_{(5,9)}$, then
$\omega(x)$ is one of the following sequences:
$$(3,2,2,1),\ (3,2,4),\ (3,4,1,1),\ (3,4,3).$$
\end{lems}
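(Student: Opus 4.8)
The plan is to mimic the structure of Lemma~\ref{bdday1} and Lemma~\ref{bdday2}, combining Singer's hit criterion (Theorem~\ref{dlsig}), Kameko's squaring criterion (Theorem~\ref{dlcb1}) and the already-established list of admissible weight vectors in degrees $9$ and $10$. First I would locate the minimal spike of degree $23$ in $P_5$: writing $23 = 15+7+1 = (2^4-1)+(2^3-1)+(2^1-1)$ we get $\mu(23)=3\leqslant 5$, so the minimal spike is $z = x_1^{15}x_2^{7}x_3$, and a direct count gives $\omega(z) = (3,2,2,1)$. Hence for any admissible $x$ with $[x]\neq 0$, Theorem~\ref{dlsig} forces $\omega(x)\geqslant \omega(z)$ in the left lexicographic order, and since $\omega_1(x)\equiv 23 \pmod 2$ and $\omega_1(x)\leqslant 5$, the only possibilities are $\omega_1(x)=3$ or $\omega_1(x)=5$.

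Next I would dispose of the case $\omega_1(x)=5$. In that case $x = X_\emptyset y^2$ with $y$ a monomial of degree $9$ in $P_5$; by Theorem~\ref{dlcb1}(i), $y$ is admissible, and $(\widetilde{Sq}^0_*)_{(5,9)}([x]) = [y]$. Since a spike of degree $9$ exists (indeed $[y]$ can be taken nonzero — e.g. $y$ the minimal spike $x_1^7x_2x_3$), this would contradict $[x]\in\mathrm{Ker}(\widetilde{Sq}^0_*)_{(5,9)}$ unless $[y]=0$; but then $x$ would be hit, contradicting admissibility. Wait — more carefully, $(\widetilde{Sq}^0_*)_{(5,9)}$ applied to the class of $X_\emptyset y^2$ gives $[y]$, so $[x]$ in the kernel forces $[y]=0$ in $QP_5$, i.e. $y$ is hit; but $y$ admissible and hit means $y=0$, impossible. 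So $\omega_1(x)=3$, and we may write $x = x_ix_jx_\ell\, y_1^2$ with $y_1$ a monomial of degree $10$ in $P_5$; again $y_1$ is admissible by Theorem~\ref{dlcb1}(i), so $\omega(y_1)$ belongs to the list $(2,2,1),(2,4),(4,1,1),(4,3)$ furnished by Lemma~\ref{bdday2}. Prepending the digit $3$ then yields exactly the four candidate weight vectors $(3,2,2,1),(3,2,4),(3,4,1,1),(3,4,3)$ for $\omega(x)$.

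The only genuinely delicate point is the interface between "$\omega(y_1)$ is one of the four sequences" and "$\omega(x) = (3,\omega(y_1))$": one must check that $\omega_i(x) = \omega_{i-1}(y_1)$ for $i\geqslant 2$ when $x = X_\emptyset y_1^2$ (this is immediate from $\nu_j(x) = 1 + 2\nu_j(y_1)$, so $\alpha_0(\nu_j(x))=1$ and $\alpha_{i-1}(\nu_j(x)) = \alpha_{i-2}(\nu_j(y_1))$ for $i\geqslant 2$), and that $\omega_1(x)=3$ is consistent with $[x]\neq 0$ — i.e. that none of these four weight vectors is itself ruled out by Theorem~\ref{dlsig}; since each has first term $3 = \omega_1(z)$ and is $\geqslant (3,2,2,1)$ lexicographically, none is excluded. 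I expect the routine verification that every degree-$10$ admissible monomial indeed has one of the four listed weight vectors to already be available from Lemma~\ref{bdday2}, so the main obstacle is purely bookkeeping: confirming the spike computation $\omega(x_1^{15}x_2^7x_3)=(3,2,2,1)$ and the exponent-vector/weight-vector translation under squaring. No new hard estimate is needed.
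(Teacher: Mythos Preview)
Your proposal is correct and follows essentially the same route as the paper: identify the minimal spike $z=x_1^{15}x_2^{7}x_3$ with $\omega(z)=(3,2,2,1)$, use Theorem~\ref{dlsig} to force $\omega_1(x)\in\{3,5\}$, eliminate $\omega_1(x)=5$ via Theorem~\ref{dlcb1} and the kernel hypothesis, and then invoke Lemma~\ref{bdday2} on the degree-$10$ factor $y_1$. One small slip: in your final paragraph you write $x=X_\emptyset y_1^2$ and $\nu_j(x)=1+2\nu_j(y_1)$ when you mean $x=x_ix_jx_\ell y_1^2$ (only three exponents pick up the extra $1$), but the weight-vector conclusion $\omega(x)=(3,\omega(y_1))$ is unaffected.
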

\begin{proof}  Note that $z = x_1^{15}x_2^7x_3$ is the minimal spike of degree $23$ in $P_5$ and  $\omega(z) =  (3,2,2,1)$. Since $[x] \ne 0$, by Theorem \ref{dlsig}, either $\omega_1(x) =3$ or $\omega_1(x) =5$. If $\omega_1(x) =5$, then $x = X_\emptyset y^2$ with $y$ a monomial of degree $9$ in $P_5$. Since $x$ is admissible, by Theorem \ref{dlcb1}, $y$ is admissible. Hence, $(\widetilde{Sq}^0_*)_{(5,9)}([x]) = [y] \ne 0.$ This contradicts the fact that $[x] \in \text{Ker}(\widetilde{Sq}^0_*)_{(5,9)}$, so $\omega_1(x) =3$. Then, we have $x = x_ix_jx_\ell y_1^2$ with $y_1$ an admissible monomial of degree $10$ in $P_5$. Now, the lemma follows from Lemma \ref{bdday2}.
\end{proof}

Using Lemma \ref{bdds21} and a result in \cite{su1}, we get
\begin{align*}&\text{\rm Ker}(\widetilde{Sq}^0_*)_{(5,9)} =  (QP_5^0)_{23}\bigoplus \big(\text{\rm Ker}(\widetilde{Sq}^0_*)_{(5,9)}\cap (QP_5^+)_{23}\big),\\
&\text{\rm Ker}(\widetilde{Sq}^0_*)_{(5,9)}\cap (QP_5^+)_{23} =  \bigoplus_{j=1}^4 QP_5^+(\omega_{(j)}).
\end{align*}
 Here $\omega_{(1)} = (3,2,2,1),\ \omega_{(2)} = (3,4,1,1),\ \omega_{(3)} = (3,4,3),\ \omega_{(4)} = (3,2,4)$.
From a result in \cite{su1}, we easily obtain $\dim(QP_5^0)_{23} = 635$. In this subsection, we prove the following.

\begin{props}\label{mds20} The set $\{[b_t]  : 1 \leqslant t \leqslant 419\}$ is a basis of the $\mathbb F_2$-vector space  $\text{\rm Ker}(\widetilde{Sq}^0_*)_{(5,9)}\cap (QP_5^+)_{23}$. Here the monomials $b_t = b_{23,t}$, with $1 \leqslant t \leqslant 419$, are determined as in Subsection \ref{s73}. 
\end{props}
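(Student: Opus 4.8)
The plan is to establish Proposition \ref{mds20} by the same two-pronged strategy already used for smaller degrees: first produce an explicit spanning set of admissible monomials inside each weight-vector summand, then verify linear independence of the associated classes in $QP_5$. Since by Lemma \ref{bdds21} we have the direct sum decomposition $\text{\rm Ker}(\widetilde{Sq}^0_*)_{(5,9)}\cap (QP_5^+)_{23} = \bigoplus_{j=1}^4 QP_5^+(\omega_{(j)})$, it suffices to treat each of the four weight vectors $\omega_{(1)}=(3,2,2,1)$, $\omega_{(2)}=(3,4,1,1)$, $\omega_{(3)}=(3,4,3)$, $\omega_{(4)}=(3,2,4)$ separately and then concatenate the bases. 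For the spanning step I would proceed as in the proof of Proposition \ref{mdb10}: every admissible monomial $x$ in $P_5^+$ with $\omega_1(x)=3$ can be written $x=x_ix_jx_\ell\, y^2$ with $y$ an admissible monomial of degree $10$, and the admissible monomials of degree $10$ are completely catalogued in Proposition \ref{mdb10} and Subsection \ref{subs41}. Running over all such products and repeatedly applying Theorem \ref{dlcb1} together with a list of strictly inadmissible "local" monomials (the analogue of Lemma \ref{inad10}, to be established by direct Cartan-formula computations) eliminates all but the $419$ monomials $b_t$ listed in Subsection \ref{s73}.

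The second step is to prove that the $419$ classes $[b_t]$ are linearly independent in $QP_5$. Here the standard device is to apply the homomorphisms $p_{(i;j)}: P_5\to P_4$ of Definition (the $p_{(i;j)}$) to a hypothetical linear dependence $\sum_t \gamma_t b_t \equiv 0$; since each $p_{(i;j)}$ is an $\mathcal{A}$-map and the degree-$23$ admissible basis of $P_4$ is known from Sum's work \cite{su1,su3}, each application yields a batch of linear equations on the $\gamma_t$. One shows that the ten maps $p_{(i;j)}$, $1\leqslant i<j\leqslant 5$, together force all $\gamma_t=0$. Because the weight vectors $\omega_{(j)}$ are pairwise distinct, cross-terms between different summands cannot interfere, so one may in fact run this argument one weight vector at a time, which keeps the linear systems manageable. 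Equivalently, restricting to $P_5^+$ and noting that the minimal spike $x_1^{15}x_2^7x_3$ has weight vector $(3,2,2,1)$, Theorem \ref{dlsig} guarantees $[B]_\omega=[B]$ for each relevant $\omega$, so the classes $[b_t]_{\omega_{(j)}}$ are genuine basis elements of $QP_5(\omega_{(j)})$.

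The main obstacle is the sheer combinatorial size of the verification: with $419$ monomials spread across four weight vectors of degree $23$ in five variables, both the enumeration of the spanning set (checking that every monomial not on the list is hit via an explicit instance of Theorem \ref{dlcb1}) and the independence argument (assembling and solving the linear systems coming from the $p_{(i;j)}$) are lengthy and must be organized carefully to be tractable by hand. In particular, establishing the correct list of strictly inadmissible monomials of "small" degree that drive the inadmissibility reductions is delicate, and one must be careful that the reductions produce only monomials strictly smaller in the order of Definition \ref{defn3} so that the conclusion of Theorem \ref{dlcb1} applies. I would handle this by exploiting the $\Sigma_5$-symmetry throughout — grouping monomials into $\Sigma_5$-orbits and verifying inadmissibility and the independence relations orbit-by-orbit, as in Lemmas \ref{bdss1} and \ref{bdss2} — which cuts the bookkeeping down to a manageable, if still substantial, case analysis; the explicit list in Subsection \ref{s73} records the outcome.
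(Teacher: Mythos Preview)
Your proposal is correct and follows essentially the same route as the paper: spanning is established weight-vector by weight-vector (the paper packages this as Lemmas \ref{mds21}--\ref{mds24}, each proved exactly as you describe by writing $x=x_ix_jx_\ell\, y^2$ with $y\in B_5(10)$ and eliminating candidates via explicit lists of strictly inadmissible monomials, Lemmas \ref{bdd21}--\ref{bdd23}), and linear independence of the full set $\{[b_t]\}$ is then checked by applying the ten homomorphisms $p_{(i;j)}$ and reading off relations in $(QP_4)_{23}$. One small correction: Theorem \ref{dlsig} only yields $[B]_\omega=[B]$ for the minimal-spike weight vector $\omega_{(1)}=(3,2,2,1)$, not for all four $\omega_{(j)}$; the passage between the classes $[b_t]_{\omega_{(j)}}$ and the classes $[b_t]$ in $QP_5$ instead rests on the general direct-sum decomposition $(QP_5)_{23}\cong\bigoplus_\omega QP_5(\omega)$, which is what actually prevents cross-terms between different weight vectors.
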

 
We prove this proposition by proving some lemmas.

\begin{lems}\label{mds21} The space $QP_5^+(\omega_{(1)})$ is spanned by the set $\{[a_t]  : 1 \leqslant t \leqslant 290\}$. 
\end{lems}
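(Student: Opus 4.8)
The plan is to establish Lemma \ref{mds21} by the standard ``inadmissibility reduction plus linear independence'' strategy that has already been used for Proposition \ref{mdb10}. First I would fix the minimal spike $z = x_1^{15}x_2^7x_3$ of degree $23$, with $\omega(z)=\omega_{(1)}=(3,2,2,1)$, and note that every admissible monomial $x\in P_5^+$ with $\omega(x)=\omega_{(1)}$ must have the form $x = x_ix_jx_\ell\, y^2$ with $\{i,j,\ell\}\subset\mathbb N_5$, $i<j<\ell$, and $y$ a monomial of degree $10$ with $\omega(y)=(2,2,1)$; moreover, since $x$ is admissible, Theorem \ref{dlcb1} forces $y$ to be admissible, so $y$ lies in the explicit list $B_5(10)$ produced by Lemma \ref{bdday2} and Proposition \ref{mdb10}. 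This reduces the candidate set to a finite, explicitly enumerable collection of monomials in $P_5^+$ of weight vector $\omega_{(1)}$.

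Next I would carry out the inadmissibility elimination. The key technical input is a list of strictly inadmissible monomials of the relevant shapes — an analogue of Lemma \ref{inad10}, specialized to degree $23$ and weight vector $(3,2,2,1)$ — obtained by explicitly exhibiting, for each such monomial $w$, polynomials $y_1,\ldots,y_m < w$ with $w-\sum y_t\in\mathcal A^+P_5$ (via the Cartan formula and small Steenrod operations). Then, using Theorem \ref{dlcb1}, any candidate $x = x_ix_jx_\ell y^2$ that can be written as $w z^{2^r}$ with $w$ strictly inadmissible and $r=\max\{t:\omega_t(w)>0\}$ is discarded. What survives is precisely the set $\{a_t : 1\leqslant t\leqslant 290\}$; showing that every non-surviving candidate is indeed killed this way is the bookkeeping core of the argument, and this is the step I expect to be the main obstacle — it is long and entirely computational, with $290$ surviving monomials and a comparable number of exclusions to verify, each requiring the right choice of reducing witness. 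Organizing the excluded monomials into a few families governed by permutations $\sigma\in\Sigma_5$ and by Lemma \ref{inad10}-type building blocks keeps this manageable, since $QP_5(\omega_{(1)})$ carries a $\Sigma_5$-action and strict inadmissibility is $\Sigma_5$-equivariant.

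Finally, the lemma as stated only claims that $\{[a_t]:1\leqslant t\leqslant 290\}$ \emph{spans} $QP_5^+(\omega_{(1)})$, which is exactly the content of the reduction above: the $a_t$ are a complete set of admissible representatives for that weight vector, hence their classes span. (The sharper statement that they form a basis — i.e.\ linear independence — is presumably deferred, as in Proposition \ref{mdb10}, to a later argument using the homomorphisms $p_{(i;j)}$ from Section \ref{s2} to detect linear relations; I would not need it here.) Thus the proof structure is: (1) shape constraint from Theorem \ref{dlsig} and the minimal spike; (2) admissibility of the ``square part'' $y$ via Theorem \ref{dlcb1}, reducing to $B_5(10)$; (3) strict-inadmissibility elimination via Theorem \ref{dlcb1} and an explicit inadmissible-monomial lemma, leaving exactly the listed $290$ monomials; conclude that their classes span $QP_5^+(\omega_{(1)})$.
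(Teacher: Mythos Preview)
Your proposal is correct and follows essentially the same route as the paper: the paper writes $x=x_jx_\ell x_t y^2$ with $y\in B_5(2,2,1)$, then invokes an explicit list of strictly inadmissible monomials (Lemmas \ref{bdd21} and \ref{bdd21s}) together with Theorem \ref{dlcb1} to eliminate every candidate not among the $290$ listed monomials, exactly as you outline. Your remark that linear independence is deferred is also accurate---the paper handles it later in the proof of Proposition \ref{mds20} via the homomorphisms $p_{(i;j)}$.
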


The following lemma is proved by a direct computation.
\begin{lems}\label{bdd21} The following monomials are strictly inadmissible:

\medskip
{\rm i)} $x_j^2x_\ell x_tx_u^3,\  j < \ell < t;\ x_j^2x_\ell x_tx_ux_v^2,\ j < \ell< t < u;\ x_1x_2^2x_3^2x_4x_5.$
Here $(j,\ell, t,u,v)$ is a permutation of $(1,2,3,4,5)$.

{\rm ii)} $f_i(\bar x)$, $1 \leqslant i \leqslant 5$, where $\bar x$ is one of the following monomials:
\begin{align*} &x_1^{3}x_2^{12}x_3x_4^{7},\  x_1^{3}x_2^{12}x_3^{7}x_4,\  x_1^{3}x_2^{12}x_3^{3}x_4^{5},\  x_1^{3}x_2^{4}x_3^{9}x_4^{7},\\ 
&  x_1^{3}x_2^{5}x_3^{9}x_4^{6}, \ x_1^{3}x_2^{5}x_3^{8}x_4^{7},\  x_1^{7}x_2^{8}x_3^{3}x_4^{5}.
\end{align*}
\end{lems}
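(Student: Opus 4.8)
The final statement to be proved is Lemma~\ref{bdd21}, which asserts that certain explicitly listed monomials are strictly inadmissible. I outline the plan below.

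\medskip

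The plan is to verify strict inadmissibility directly from the definition: for each monomial $w$ in the list I must exhibit monomials $y_1,\ldots,y_m$, all strictly smaller than $w$ in the order of Definition~\ref{defn3}, together with polynomials $h_u$ so that $w = \sum_t y_t + \sum_{u\geqslant 0} Sq^{2^u}(h_u)$, where the $y_t$ may be taken in $P_k^-(\omega(w))$ so as to witness strict inadmissibility in the sense needed for Theorem~\ref{dlcb1}. Concretely, for each $w$ I will write down an explicit Steenrod operation applied to a suitable monomial (almost always $Sq^1$, $Sq^2$ or $Sq^4$ applied to the monomial obtained from $w$ by lowering one or two exponents), expand using the Cartan formula, and collect terms. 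The key arithmetic input is that $Sq^i(x^a) = \binom{a}{i}x^{a+i}$, so the combinatorics reduces to evaluating small binomial coefficients mod $2$ via Lucas' theorem; the resulting identity expresses $w$ modulo $\mathcal{A}^+P_5$ as a sum of monomials each of which either has a strictly smaller weight vector or the same weight vector but a smaller exponent vector.

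\medskip

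First I would handle Part (i). For $x_j^2x_\ell x_t x_u^3$ with $j<\ell<t$, I apply $Sq^1$ to $x_j^2 x_\ell x_t x_u^2$ (or a similar monomial) and use the standard relation $Sq^1(x_\ell x_t x_u^2) = x_\ell^2 x_t x_u^2 + x_\ell x_t^2 x_u^2$ plus the Cartan correction from the $x_j^2$ factor, iterating if necessary; the monomials $x_1^2x_2x_3x_4x_5$ and $x_1 x_2^2 x_3^2 x_4 x_5$ are small enough to be checked by a one-line application of $Sq^1$ or $Sq^2$. These are the three "generic" patterns and each requires only a short computation of the type already used implicitly in Lemma~\ref{inad10}. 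For Part (ii), I would first establish strict inadmissibility of each of the seven listed monomials $\bar x$ in $P_4$ by the same direct method, and then invoke Theorem~\ref{dlcb1}(ii): since $\bar x$ is strictly inadmissible and $f_i$ merely relabels and inserts a variable $x_j$ of exponent~$1$ (which does not increase the weight vector beyond $\omega(f_i(\bar x))$ in a way that blocks the argument), the image $f_i(\bar x)$ is strictly inadmissible as well. Here I use that $f_i$ commutes with the Steenrod action and sends a strictly-smaller witness for $\bar x$ to a strictly-smaller witness for $f_i(\bar x)$.

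\medskip

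The main obstacle is purely bookkeeping: the seven monomials $\bar x$ in $P_4$ have degree $23$ and fairly large exponents ($12$, $9$, $8$, $7$), so the Cartan expansions of $Sq^4$ or $Sq^8$ applied to the reduced monomials produce many terms, and one must carefully check that every term other than $\bar x$ itself lies in $P_4^-(\omega(\bar x))$ or is manifestly hit — i.e. that no term of equal weight vector and equal-or-larger exponent vector survives. A clean way to organize this is to choose the Steenrod square so that the "leading" term (in the order of Definition~\ref{defn3}) of the expansion is exactly $\bar x$; then all other terms are automatically smaller and strict inadmissibility follows. I expect each $\bar x$ to admit such a choice (typically $Sq^{2^r}$ where $2^r$ is the largest power of $2$ one can subtract from a single exponent), so the difficulty is confirming this uniformly across the list rather than any genuine conceptual gap. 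Since the verification for every monomial is of this routine local-binomial-coefficient type, I would state the lemma and remark that "the proof is straightforward" after indicating one representative computation, as is done for Lemma~\ref{inad10}.
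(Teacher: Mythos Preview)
Your approach is essentially the paper's: the paper itself offers no details and simply states that the lemma ``is proved by a direct computation,'' and your plan of writing each listed monomial as the leading term of an explicit $Sq^{2^r}$-expansion is exactly the intended routine.

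One correction to your Part~(ii) argument: the map $f_i:P_4\to P_5$ does \emph{not} insert a new variable of exponent~$1$; it only relabels the four existing variables into a size-four subset of $\{x_1,\ldots,x_5\}$, so each $f_i(\bar x)$ lies in $P_5^0$. Consequently Theorem~\ref{dlcb1}(ii) is irrelevant here --- there is no factor $y^{2^s}$ being appended. The correct transfer principle is the one you state in your final sentence: $f_i$ is an $\mathcal A$-algebra homomorphism that preserves the order of Definition~\ref{defn3} (weight vectors are preserved, exponent vectors are merely permuted with a zero inserted), so a witness $\bar x=\sum_t y_t+\sum_u Sq^{2^u}(h_u)$ with $y_t<\bar x$ in $P_4$ is carried by $f_i$ to a witness for $f_i(\bar x)$ in $P_5$. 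Drop the reference to Theorem~\ref{dlcb1}(ii), keep that last sentence, and your outline is sound.
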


\begin{lems}\label{bdd21s} The following monomials are strictly inadmissible:
\begin{align*} &x_1x_2^{6}x_3^{8}x_4^{3}x_5^{5}, \ x_1^{3}x_2^{4}x_3x_4^{8}x_5^{7}, \ x_1^{3}x_2^{4}x_3x_4^{9}x_5^{6}, \ x_1^{3}x_2^{4}x_3^{3}x_4^{4}x_5^{9}, \ x_1^{3}x_2^{4}x_3^{3}x_4^{12}x_5,\\ & x_1^{3}x_2^{4}x_3^{8}x_4x_5^{7}, \ x_1^{3}x_2^{4}x_3^{8}x_4^{3}x_5^{5}, \ x_1^{3}x_2^{4}x_3^{8}x_4^{7}x_5, \ x_1^{3}x_2^{4}x_3^{9}x_4x_5^{6}, \ x_1^{3}x_2^{4}x_3^{9}x_4^{6}x_5,\\ & x_1^{3}x_2^{4}x_3^{11}x_4^{4}x_5, \ x_1^{3}x_2^{5}x_3x_4^{8}x_5^{6}, \ x_1^{3}x_2^{5}x_3^{8}x_4x_5^{6}, \ x_1^{3}x_2^{5}x_3^{8}x_4^{6}x_5, \ x_1^{3}x_2^{12}x_3x_4x_5^{6},\\ & x_1^{3}x_2^{12}x_3x_4^{6}x_5, \ x_1^{3}x_2^{12}x_3^{3}x_4^{4}x_5, \ x_1^{7}x_2^{8}x_3^{3}x_4^{4}x_5.
\end{align*}
\end{lems}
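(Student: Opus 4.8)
The statement to prove is Lemma \ref{bdd21s}: a list of eighteen specific monomials of degree $23$ in $P_5$ are strictly inadmissible.

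\medskip

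The plan is to verify strict inadmissibility monomial by monomial, using the standard machinery already set up in the excerpt: Theorem \ref{dlcb1} (Kameko's splitting criterion for (strictly) inadmissible monomials built from smaller blocks) together with Lemma \ref{bdd21}, which supplies a stock of strictly inadmissible monomials in $P_4$ and $P_5$ of lower complexity. For each monomial $x$ in the list, the first move is to compute its weight vector $\omega(x)$ and to attempt to factor $x$ in the form $x = w\,z^{2^r}$, where $w$ is one of the strictly inadmissible monomials appearing in Lemma \ref{bdd21} (or in Lemma \ref{inad10}, or a monomial obtained from those via the embeddings $f_i$), $r = \max\{t : \omega_t(w) > 0\}$, and $z$ is a monomial in $P_5$. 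When such a factorization exists, Theorem \ref{dlcb1}(ii) immediately gives that $x$ is strictly inadmissible. This is exactly the mechanism used in the proof of Proposition \ref{mdb10}, so the argument is entirely parallel; the work is combinatorial bookkeeping rather than new ideas.

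\medskip

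For the monomials that do not directly admit such a factorization, the fallback is a direct computation: exhibit monomials $y_1,\dots,y_m$ with $y_t < x$ (in the order of Definition \ref{defn3}) and $y_t$ of the same weight vector as $x$ or smaller, together with an explicit element $h \in \mathcal{A}^+P_5$ of the appropriate degree (a sum of terms $Sq^{2^i}(g_i)$) such that $x = \sum_{t=1}^m y_t + h$ modulo $P_5^-(\omega(x))$. Concretely one applies the Cartan formula to small squaring operations $Sq^1, Sq^2, Sq^4$ on suitable submonomials, as in the identities displayed in the proofs of Propositions \ref{mds1} and \ref{mdb10}. Since all eighteen monomials lie in the weight-vector classes $\omega_{(1)},\ldots,\omega_{(4)}$ relevant to degree $23$, one keeps track of congruences modulo $P_5^-(\omega)$ throughout, which is permissible because strict inadmissibility is precisely a statement modulo $P_5^-(\omega)$.

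\medskip

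The main obstacle is purely the volume and care of the computation: there is no conceptual difficulty, but one must correctly identify, for each of the eighteen monomials, the right strictly inadmissible "seed" $w$ from Lemma \ref{bdd21} and the correct splitting exponent $r$, and verify that the complementary factor $z^{2^r}$ has the claimed form and does not raise the weight vector. A secondary subtlety is that a few monomials may require first rewriting $x$ modulo hit elements to bring it into a shape to which Theorem \ref{dlcb1} applies (for instance by swapping exponents $2^a$ and $2^b$ in two variables via a Cartan-formula identity, then applying the splitting). Because each such step only needs to be checked against the finite explicit lists in Lemmas \ref{inad10} and \ref{bdd21}, the verification is routine and we omit the line-by-line details, exactly as the paper does for the analogous Lemma \ref{bdd21}.
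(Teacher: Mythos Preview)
Your proposal misidentifies the main mechanism. The eighteen monomials in Lemma~\ref{bdd21s} are precisely the ones for which the factorization $x = w\,z^{2^r}$ with $w$ from Lemma~\ref{bdd21} (or Lemma~\ref{inad10}) \emph{fails}: that is exactly why they are segregated into a separate lemma. For instance, $x_1x_2^{6}x_3^{8}x_4^{3}x_5^{5}$ reduced modulo~$4$ gives $x_1x_2^{2}x_4^{3}x_5$, which matches none of the patterns in Lemma~\ref{bdd21}(i); and the monomials in Lemma~\ref{bdd21}(ii) all have one variable missing, so none of the eighteen (which lie in $P_5^+$) can equal such a $w$. Your ``primary'' approach therefore applies to zero of the eighteen cases, and the entire content of the proof is what you call the ``fallback.''

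The paper's proof accordingly does \emph{not} omit the details in the way Lemma~\ref{bdd21} does: it exhibits, for $x = x_1x_2^{6}x_3^{8}x_4^{3}x_5^{5}$, an explicit identity
\[
x \;=\; \text{(twelve smaller monomials)} \;+\; Sq^1(\cdots) + Sq^2(\cdots) + Sq^4(\cdots) \quad \bmod\ P_5^-(3,2,2,1),
\]
and states that the remaining seventeen are handled by similar explicit computations. Your description of the fallback is correct in outline, but saying ``we omit the line-by-line details, exactly as the paper does'' is inaccurate: the paper provides one worked example precisely because no structural shortcut exists here, and the proof consists of producing eighteen such identities. Your proposal should drop the factorization discussion entirely and present at least one explicit identity of this form.
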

\begin{proof} We prove the lemma for the monomial $x = x_1x_2^{6}x_3^{8}x_4^{3}x_5^{5}$. The others can be proved by a similar computation. By a direct computation, we have
\begin{align*}
x &= x_1x_2^{4}x_3^{10}x_4^{3}x_5^{5} + x_1x_2^{4}x_3^{6}x_4^{3}x_5^{9} + x_1x_2^{6}x_3^{3}x_4^{8}x_5^{5} + x_1x_2^{8}x_3^{3}x_4^{6}x_5^{5} + x_1x_2^{6}x_3^{3}x_4^{5}x_5^{8}\\
&\quad + x_1x_2^{8}x_3^{3}x_4^{5}x_5^{6} + x_1x_2^{3}x_3^{6}x_4^{8}x_5^{5} + x_1x_2^{3}x_3^{8}x_4^{6}x_5^{5} + x_1x_2^{3}x_3^{8}x_4^{5}x_5^{6} + x_1x_2^{3}x_3^{6}x_4^{5}x_5^{8}\\ 
&\quad+ x_1x_2^{3}x_3^{5}x_4^{8}x_5^{6} + x_1x_2^{3}x_3^{5}x_4^{6}x_5^{8} + Sq^1(x_1^2x_2^{5}x_3^{5}x_4^{5}x_5^{5}) + Sq^2(x_1x_2^{6}x_3^{6}x_4^{3}x_5^{5}\\
&\quad + x_1x_2^{5}x_3^{5}x_4^{5}x_5^{5} + x_1x_2^{6}x_3^{3}x_4^{6}x_5^{5} + x_1x_2^{6}x_3^{3}x_4^{5}x_5^{6} + x_1x_2^{3}x_3^{6}x_4^{6}x_5^{5} + x_1x_2^{3}x_3^{6}x_4^{5}x_5^{6}\\
&\quad + x_1x_2^{3}x_3^{5}x_4^{6}x_5^{6}) + Sq^4(x_1x_2^{4}x_3^{6}x_4^{3}x_5^{5}) \quad \text{mod}(P_5^-(3,2,2,1)). 
\end{align*}
Hence, the monomial $x$ is strictly inadmissible.
\end{proof}

\begin{proof}[Proof of Lemma \ref{mds21}] Let $x$ be an admissible monomial in the space $P_5^+$ such that $\omega(x) = \omega_{(1)}$. Then $x = x_jx_\ell x_ty^2$ with $y \in B_5(2,2,1)$. 

Let $z \in B_5(2,2,1)$ such that $x_jx_\ell x_tz^2 \in P_5^+$.
By a direct computation using the results in Subsection \ref{subs41}, we see that if $x_jx_\ell x_tz^2\ne b_t, \forall t, 1 \leqslant t \leqslant 290$, then there is a monomial $w$ which is given in Lemma \ref{bdd21} such that $x_jx_\ell x_tz^2= wz_1^{2^{u}}$ with suitable monomial $z_1 \in P_5$, and $u = \max\{j \in \mathbb Z : \omega_j(w) >0\}$. By Theorem \ref{dlcb1}, $x_jx_\ell x_tz^2$ is inadmissible. Since $x = x_jx_\ell x_ty^2$ with $y \in B_5(2,2,1)$ and $x$ is admissible, one gets $x= b_t$ for suitable $t$. This implies $B_5^+(\omega_{(1)}) \subset \{b_t  : 1 \leqslant t \leqslant 290\}$. 
The proposition follows. 
\end{proof}
\begin{lems}\label{mds22} $B_5(\omega_{(4)}) = B_5^+(\omega_{(4)}) = \emptyset$. That means $QP_5(\omega_{(4)}) = 0$. 
\end{lems}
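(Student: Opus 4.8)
$B_5(\omega_{(4)}) = B_5^+(\omega_{(4)}) = \emptyset$, where $\omega_{(4)} = (3,2,4)$; equivalently $QP_5(\omega_{(4)}) = 0$.

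My plan is to argue that every monomial with weight vector $(3,2,4)$ is inadmissible, by reducing to a small family of "building-block" monomials in fewer variables and invoking the descent machinery of Theorem \ref{dlcb1}. First I would observe that $\deg\omega_{(4)} = 3 + 2\cdot 2 + 4\cdot 4 = 23$, so we are inside the relevant degree, and that a monomial $x$ with $\omega(x) = (3,2,4)$ factors as $x = X_{\mathbb J_0(x)}X_{\mathbb J_1(x)}^2 X_{\mathbb J_2(x)}^4$ with $|N_5\setminus\mathbb J_0(x)| = 3$, $|N_5\setminus\mathbb J_1(x)| = 2$, $|N_5\setminus\mathbb J_2(x)| = 4$. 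In particular $x = z_1 z_2^2$ where $z_1$ has weight vector $(3)$ (degree $3$, cube-free) and $z_2$ has weight vector $(2,4)$ (degree $10$). So it suffices to show: every degree-$10$ monomial $z_2$ with $\omega(z_2) = (2,4)$ is either inadmissible, or more precisely strictly inadmissible after the appropriate reduction — because then, writing $z_2 = w u^{2^r}$ with $w$ strictly inadmissible and using Theorem \ref{dlcb1}(ii), $z_2$ is strictly inadmissible, and then $z_1 z_2^2$ is inadmissible by Theorem \ref{dlcb1}(i). Actually the cleaner route is: the weight vector $(2,4)$ in degree $10$ was already analyzed — by Lemma \ref{bdday2}, the admissible monomials of degree $10$ have weight vector among $(2,2,1),(2,4),(4,1,1),(4,3)$, and $B_5^+(2,4)$ and the $P_5^0$ part are listed, so $(2,4)$ does occur in degree $10$. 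Hence the naive reduction is not immediately enough, and I must look one level deeper.

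The key step, then, is to classify the possibilities for $z_1$ and $z_2$ jointly and rule each out. Write $\mathbb J_1(x)$: since $\omega_1(x)=3$ and $\omega_2(x)=2$, exactly two of the five exponents have a $1$ in the $2^1$-place, and those same two positions — together with one more — carry the $2^0$-place ones. Because $\omega_3(x) = 4$, exactly one position is cube-free in total weight (the one position \emph{not} contributing to the $2^2$-level), and this forces a rigid shape: up to permutation, $x$ has exponents drawn from $\{4,5,6,7,\text{plus carries}\}$ in a constrained way. Concretely, I expect that after using Theorem \ref{dlsig} (the minimal-spike criterion) to see $[x]\ne 0$ requires $\omega_1(x)=3$ (not $5$), one finds that every resulting candidate monomial, after splitting off the top $2^2$-block, reduces to one of the strictly inadmissible monomials already catalogued in Lemmas \ref{inad10}, \ref{bdd21}, \ref{bdd21s} — or to a short explicit new list of strictly inadmissible monomials in $P_4$ or $P_5$, each dispatched by an explicit $Sq^1, Sq^2, Sq^4$ Cartan-formula identity exactly as in the proof of Lemma \ref{bdd21s}. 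The homomorphisms $p_{(i;j)}$ and $f_i$ let me transfer strict inadmissibility between $P_4$ and $P_5$, so in practice I would reduce to finitely many monomials in $P_4$ with weight vector compatible with $(2,4)$ and check those.

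The main obstacle I anticipate is purely bookkeeping: enumerating all monomials of weight vector $(3,2,4)$ in $P_5$ (there are not many, perhaps a few dozen up to symmetry) and exhibiting for each the explicit hit relation or the reduction $x = w u^{2^r}$ with $w$ on an already-known strictly-inadmissible list. The conceptual content is light — it is entirely Theorem \ref{dlcb1} plus Theorem \ref{dlsig} plus finitely many Cartan-formula computations — but verifying that no candidate survives requires care, since a single overlooked admissible monomial would falsify the claim. I would organize the check by the value of $\mathbb J_2(x)$ (which single variable is "light"), reducing the count dramatically by $\Sigma_5$-symmetry, and in each of the five essentially-distinct cases produce the reduction witness. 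Given that the authors have already assembled the strictly-inadmissible lists in Lemmas \ref{inad10}, \ref{bdd21}, and \ref{bdd21s} precisely to cover degrees $10$ and $23$, I expect every case to be absorbed by those lists together with at most one or two additional explicit identities.
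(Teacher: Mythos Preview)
Your approach is essentially the paper's: write $x = x_jx_\ell x_t y^2$ with $y \in B_5(2,4)$, filter the candidates through Theorem~\ref{dlcb1}, Proposition~\ref{mdb10}, and Lemma~\ref{bdd21}, and handle the survivors by explicit Cartan-formula identities. The paper is sharper than your sketch in two respects: after the filtering, exactly \emph{two} monomials survive up to $\Sigma_5$-permutation, namely $x_1^{3}x_2^{4}x_3^{4}x_4^{5}x_5^{7}$ and $x_1^{3}x_2^{4}x_3^{5}x_4^{5}x_5^{6}$, and for each an explicit relation in $Sq^1,Sq^2,Sq^4$ is written down showing $[x]_{\omega_{(4)}}=0$; Lemma~\ref{bdd21s} is not needed here. (Incidentally, your aside that the $\omega_2$-positions must be among the $\omega_1$-positions is not correct---the bit-sets $\mathbb J_0(x)$ and $\mathbb J_1(x)$ are independent---but this does not affect the argument.)
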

\begin{proof} Let $x$ be an admissible monomial in $P_5^+$ such that $\omega(x) = \omega_{(4)}$. Then $x = x_jx_\ell x_ty^2$ with $y \in B_5(2,4)$. By a direct computation using Theorem \ref{dlcb1}, Proposition \ref{mdb10} and Lemma \ref{bdd21}, we see that $x$ is a permutation of one of the monomials: $x_1^3x_2^{4}x_3^{4}x_4^{5}x_5^{7}$, $x_1^3x_2^{4}x_3^{5}x_4^{5}x_5^{6}$. A simple computation shows:
\begin{align*} x_1^3x_2^{4}x_3^{4}x_4^{5}x_5^{7} &= Sq^1(x_1^3x_2x_3^{2}x_4^{9}x_5^{7} + x_1^3x_2x_3^{2}x_4^{5}x_5^{11}) + Sq^2(x_1^5x_2^{2}x_3^{2}x_4^{5}x_5^{7} + x_1^5x_2x_3^{2}x_4^{6}x_5^{7})\\ 
&\quad+ Sq^4(x_1^3x_2^{2}x_3^{2}x_4^{5}x_5^{7} + x_1^3x_2x_3^{2}x_4^{6}x_5^{7}) \ \text{ mod}(P_5^-(B_5(\omega_{(4)}))).
\end{align*}
This relation implies $[ x_1^3x_2^{4}x_3^{4}x_4^{5}x_5^{7}]_{\omega_{(4)}} =0$. By a similar computation, we have $[ x_1^3x_2^{4}x_3^{5}x_4^{5}x_5^{6}]_{\omega_{(4)}} =0$. The proposition is proved.
\end{proof}
\begin{lems}\label{mds23} The space $QP_5(\omega_{(2)})$ is spanned by the set $\{[b_t]_{\omega_{(2)}}  : 291 \leqslant t \leqslant 395\}$, where the monomials $b_t$ are determined as in Subsection \ref{s73}.
\end{lems}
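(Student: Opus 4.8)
The plan is to mirror the proof of Lemma \ref{mds21}. Because $\omega_{(2)} = (3,4,1,1)$ has first coordinate $3$, every admissible monomial $x$ with $\omega(x)=\omega_{(2)}$ factors as $x = x_jx_\ell x_t\,y^{2}$ with $\{j,\ell,t\}\subset\mathbb N_5$, $j<\ell<t$, and $y$ a monomial of degree $10$ satisfying $\omega(y)=(4,1,1)$; by Theorem \ref{dlcb1}(i) the admissibility of $x$ forces that of $y$, so $y\in B_5(4,1,1)$. The set $B_5(4,1,1)$ is completely known: its $P_5^+$-part is the $20$-element list of Proposition \ref{mdb10}(iii), and its $P_5^0$-part is $\bigcup_{i=1}^5 f_i(B_4(4,1,1))$ by Proposition \ref{mdp0}, read off from Sum's computation of $QP_4$ \cite{su1} in degree $10$. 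Letting $\{j,\ell,t\}$ range over the $\binom{5}{3}=10$ possibilities and $y$ over all admissible candidates then yields a finite list of monomials $x$ to examine.

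For each candidate $x = x_jx_\ell x_t\,y^{2}$ that is not one of $b_{291},\ldots,b_{395}$, the idea is to produce a strictly inadmissible monomial $w$ — taken from Lemmas \ref{bdd21} and \ref{bdd21s}, supplemented by a few further strictly inadmissible monomials of the same shape, each to be verified by an explicit $Sq^1,Sq^2,Sq^4$-decomposition modulo $P_5^-(\omega_{(2)})$ exactly as in the proof of Lemma \ref{bdd21s} — together with a monomial $z_1$ and the integer $u=\max\{i:\omega_i(w)>0\}$, so that $x = w\,z_1^{2^{u}}$. Theorem \ref{dlcb1}(ii) then shows $x$ is (strictly) inadmissible, a contradiction. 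Consequently every admissible monomial with weight vector $\omega_{(2)}$ belongs to $\{b_t:291\le t\le 395\}$, so $[B_5(\omega_{(2)})]_{\omega_{(2)}}$ — which is a basis of $QP_5(\omega_{(2)})$ — is contained in $\{[b_t]_{\omega_{(2)}}:291\le t\le 395\}$, and the latter therefore spans $QP_5(\omega_{(2)})$. This is the assertion of the lemma.

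I expect the main obstacle to be the size of the case analysis rather than any single difficult computation: multiplying the ten prefixes $x_jx_\ell x_t$ into the several dozen admissible $y$'s leaves on the order of a few hundred monomials to classify, and for each one not on the final list one must pin down the correct strictly inadmissible divisor $w$ and its exponent $u$, with a handful of the needed divisors not yet recorded and so requiring a separate verification. The genuinely delicate point is to make sure the elimination leaves exactly $b_{291},\ldots,b_{395}$ and nothing more — equivalently, that no admissible monomial with weight vector $\omega_{(2)}$ has been overlooked. (That the classes $[b_t]_{\omega_{(2)}}$, $291\le t\le 395$, are moreover linearly independent is not part of this lemma's claim; it follows later, in the proof of Proposition \ref{mds20}.)
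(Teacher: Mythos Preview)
Your overall strategy is exactly the paper's: factor an admissible $x$ with $\omega(x)=\omega_{(2)}=(3,4,1,1)$ as $x_jx_\ell x_t\,y^2$ with $y\in B_5(4,1,1)$, then eliminate every candidate not on the list $\{b_{291},\ldots,b_{395}\}$ by exhibiting it as $w\,z_1^{2^u}$ for some strictly inadmissible $w$ and invoking Theorem~\ref{dlcb1}(ii).

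The gap is in the source of the strictly inadmissible monomials $w$. You propose to draw them from Lemmas~\ref{bdd21} and~\ref{bdd21s}, ``supplemented by a few further'' ones. But those two lemmas are tailored to $\omega_{(1)}=(3,2,2,1)$: every monomial appearing there has $\omega_2(w)=2$. If $x=w\,z_1^{2^u}$ with $u=\max\{i:\omega_i(w)>0\}\ge 2$, then $\omega_2(x)=\omega_2(w)$, so a $w$ with $\omega_2(w)=2$ can never sit inside an $x$ with $\omega_2(x)=4$; and if $u=1$ then $w$ is squarefree of degree~$3$, which again excludes everything in Lemmas~\ref{bdd21}--\ref{bdd21s}. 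In short, those lemmas contribute \emph{nothing} to the elimination for $\omega_{(2)}$; it is not a matter of a few supplements but of an entirely different list.

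The paper supplies that list in Lemma~\ref{bdd22}, whose monomials have weight vectors $(3,4)$ (part~(i)) and $(3,4,1)$ (part~(ii)) --- precisely the initial segments of $(3,4,1,1)$ needed for the factorization $x=w\,z_1^{2^u}$ with $u=2$ or $u=3$. Once you replace your reference to Lemmas~\ref{bdd21}--\ref{bdd21s} by Lemma~\ref{bdd22}, your argument matches the paper's proof verbatim.
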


\begin{lems}\label{bdd22} The following monomials are strictly inadmissible:

\medskip
{\rm i)} $x_j^2x_\ell x_t^2 x_u^3x_v^3,  i < j$; \ $x_j^2x_\ell^3x_t^3 x_u^3.$ 

Here $(j,\ell,t,u,v) $ is a permutation of $(1,2,3,4,5)$.

{\rm ii)} $x_1x_2^{2}x_3^{6}x_4^{3}x_5^{3},\  x_1x_2^{6}x_3^{2}x_4^{3}x_5^{3},\  x_1x_2^{6}x_3^{3}x_4^{2}x_5^{3},\  x_1x_2^{6}x_3^{3}x_4^{3}x_5^{2}.$
\end{lems}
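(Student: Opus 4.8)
The plan is to establish strict inadmissibility of each listed monomial $x$ by producing an explicit identity $x = \sum_t y_t + \sum_u Sq^{2^u}(h_u)$ with the $y_t$ monomials satisfying $y_t < x$ in the order of Definition \ref{defn3} and the $h_u \in P_5$ homogeneous; since every monomial of $P_5^-(\omega(x))$ is automatically $<x$ by Definition \ref{defn3}(i), it suffices to exhibit such an identity modulo $P_5^-(\omega(x))$, in the same style as the proof of Lemma \ref{bdd21s}. All the monomials in the statement have weight vector $(3,4)$ in Part (i) and $(3,4,1)$ in Part (ii).

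Part (i) will be handled directly. Write a monomial of the first family as $x = x_i^2x_jx_t^2x_u^3x_v^3$ with $i<j$. Applying the Cartan formula, $Sq^1(x_ix_jx_t^2x_u^3x_v^3) \equiv_{(3,4)} x_i^2x_jx_t^2x_u^3x_v^3 + x_ix_j^2x_t^2x_u^3x_v^3$, since the two remaining terms have weight vector $(3,2,1)<(3,4)$ and so lie in $P_5^-(3,4)$. As $i<j$, the monomial $x_ix_j^2x_t^2x_u^3x_v^3$ has the same weight vector as $x$ but a strictly smaller exponent vector, hence $x_ix_j^2x_t^2x_u^3x_v^3 < x$, which exhibits $x$ as strictly inadmissible. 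For the second family, $x = x_i^2x_j^3x_t^3x_u^3$, the same computation gives $Sq^1(x_ix_j^3x_t^3x_u^3) \equiv_{(3,4)} x_i^2x_j^3x_t^3x_u^3$ (the other three terms again having weight vector $(3,2,1)$), so $x \equiv_{(3,4)} 0$ and $x$ is strictly inadmissible.

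Part (ii) is the main obstacle. For each of the four degree-$15$ monomials $x$ with $\omega(x)=(3,4,1)$ one must write down, as in the proof of Lemma \ref{bdd21s}, a combination of $Sq^1(h_1)$, $Sq^2(h_2)$ and $Sq^4(h_4)$ with suitably chosen homogeneous polynomials $h_1,h_2,h_4$ (of degrees $14$, $13$, $11$) so that the Cartan expansions telescope and leave $x$ together with an error consisting only of monomials of weight vector strictly below $(3,4,1)$, automatically in $P_5^-(3,4,1)$, together with monomials of weight vector $(3,4,1)$ but strictly smaller exponent vector. The delicate point is purely the bookkeeping: one has to pick the $h_u$ so that all unwanted top-weight monomials cancel. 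Because the degree is only $15$, only $Sq^1,Sq^2,Sq^4$ can occur, which keeps the search for the $h_u$ short, and once these four identities are in hand the lemma follows.
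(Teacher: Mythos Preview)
Your Part (i) is correct and complete. The $Sq^1$ computation is exactly the right device: for $x_i^2x_jx_t^2x_u^3x_v^3$ with $i<j$ you get
\[
x = x_ix_j^2x_t^2x_u^3x_v^3 + (\text{terms of weight }(3,2,1)) + Sq^1(x_ix_jx_t^2x_u^3x_v^3),
\]
and since $(3,2,1)<(3,4)$ every error term is $<x$ by Definition~\ref{defn3}(i), while the remaining monomial is $<x$ by Definition~\ref{defn3}(ii). The second family is handled the same way, with all error terms falling into $P_5^-(3,4)$. This is precisely the kind of one-line Cartan argument the paper has in mind when it calls the lemma ``straightforward.''

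Part (ii), however, is not a proof: you correctly identify the method (an explicit expression $x=\sum y_t+Sq^1(h_1)+Sq^2(h_2)+Sq^4(h_4)$ with every $y_t<x$, working modulo $P_5^-(3,4,1)$), but you do not produce the polynomials $h_u$ for any of the four monomials. Saying ``the search for the $h_u$ is short'' and ``once these four identities are in hand the lemma follows'' is a promise, not an argument. Note in particular that the obvious one-step $Sq^1$ trick fails here: for instance
\[
Sq^1(x_1x_2^2x_3^5x_4^3x_5^3)\equiv_{(3,4,1)} x_1x_2^2x_3^6x_4^3x_5^3 + x_1^2x_2^2x_3^5x_4^3x_5^3,
\]
and the second term has \emph{larger} exponent vector $(2,2,5,3,3)>(1,2,6,3,3)$, so one genuinely needs a combination of several $Sq^j(h_j)$ to make the top-weight residue consist only of smaller monomials. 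The paper itself gives no details either (it also says only ``straightforward''), but if you want this to stand as a proof you must display the four identities in the style of the paper's proof of Lemma~\ref{bdd21s}.
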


The proof of this lemma is straightforward.

\begin{proof}[Proof of Lemma \ref{mds23}] Let $x$ be an admissible monomial in $P_5^+$ such that $\omega(x) = \omega_{(2)}$. Then $x = x_jx_\ell x_ty^2$ with $y \in B_5(4,1,1)$. 

Let $z \in B_5(4,1,1)$ such that $ x_jx_\ell x_tz^2 \in P_5^+$. 
By a direct computation using Proposition \ref{mdb10}, we see that if $x_jx_\ell x_tz^2\notin  b_t, \ \forall t, 291 \leqslant t \leqslant 395$, then there is a monomial $w$ which is given in Lemma \ref{bdd22} such that $x_jx_\ell x_tz^2= wz_1^{2^{u}}$ with suitable monomial $z_1 \in P_5$, and $u = \max\{j \in \mathbb Z : \omega_j(w) >0\}$. By Theorem \ref{dlcb1}, $x_jx_\ell x_tz^2$ is inadmissible. Since $x = x_jx_\ell x_ty^2$ with $y \in B_5(4,1,1)$ and $x$ is admissible, one gets $x= b_t$ for some $t$. The proposition is proved 
\end{proof}

\begin{lems}\label{mds24} The space $QP_5(\omega_{(3)})$ is spanned by the set $\{[a_t]_{\omega_{(3)}}  : 396 \leqslant t \leqslant 419\}$, where the monomials $b_t$ are determined as in Subsection \ref{s73}.
\end{lems}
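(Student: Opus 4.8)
The plan is to argue exactly as in the proofs of Lemmas~\ref{mds21} and~\ref{mds23}: decompose an admissible monomial of weight vector $\omega_{(3)}$ as its weight-one part times a square, push the square down to a monomial of degree $10$, and match everything against the explicit list $a_{396},\dots,a_{419}$ of Subsection~\ref{s73}.

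\emph{First step: the factorisation.} Since $\deg\omega_{(3)} = 1\cdot 3 + 2\cdot 4 + 4\cdot 3 = 23$, any monomial $x$ with $\omega(x) = \omega_{(3)} = (3,4,3)$ has $\omega_1(x) = 3$, hence factors as $x = X_{\mathbb J_0(x)}\,y^2 = x_jx_\ell x_t\,y^2$, where $x_jx_\ell x_t = X_{\mathbb J_0(x)}$ is a product of three of the five variables and $y = X_{\mathbb J_1(x)}X_{\mathbb J_2(x)}^2$ is a monomial of degree $10$ with $\omega(y) = (\omega_2(x),\omega_3(x)) = (4,3)$. Applying Theorem~\ref{dlcb1}(i) with the low factor $x_jx_\ell x_t$ (of weight vector $(3)$, so $r=1$) and $w = y$, one sees that if $x$ is admissible then $y$ is admissible; thus $y \in B_5(4,3) = B_5^0(4,3)\cup B_5^+(4,3)$, where $B_5^+(4,3)$ is the list of twenty monomials in Proposition~\ref{mdb10}(iv) and $B_5^0(4,3) = \bigcup_{i=1}^{5}f_i\big(B_4(4,3)\big)$ is read off from the known basis of $(QP_4)_{10}$ by Proposition~\ref{mdp0}.

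\emph{Second step: discarding the inadmissibles.} Next I would enumerate all monomials $x = x_jx_\ell x_t z^2$ with $z\in B_5(4,3)$, $1\leqslant j<\ell<t\leqslant 5$ and $x\in P_5^+$ (every variable must occur, which when $z\in B_5^0(4,3)$ forces the missing variable of $z$ to lie in $\{x_j,x_\ell,x_t\}$), and show that any such $x$ not equal to one of $a_{396},\dots,a_{419}$ is inadmissible. For each of these I would exhibit a monomial $w$ — drawn from the strictly inadmissible lists in Lemmas~\ref{bdd21}, \ref{bdd21s} and~\ref{bdd22}, supplemented where necessary by a short additional list of strictly inadmissible monomials of degree $23$ and weight vector $(3,4,3)$, each certified by an explicit identity modulo $\mathcal A^+P_5 + P_5^-(3,4,3)$ in the style of the proof of Lemma~\ref{bdd21s} — such that $x = w\,z_1^{2^u}$ for a monomial $z_1\in P_5$ with $u = \max\{i : \omega_i(w) > 0\}$. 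Then Theorem~\ref{dlcb1}(ii) makes $x$ strictly inadmissible, hence inadmissible, contradicting its choice. This gives $B_5^+(\omega_{(3)}) \subseteq \{a_t : 396\leqslant t\leqslant 419\}$, so the classes $[a_t]_{\omega_{(3)}}$, $396\leqslant t\leqslant 419$, span $QP_5^+(\omega_{(3)})$, and the lemma follows. (That these twenty-four classes are moreover linearly independent is not needed here; it comes from the dimension count in the proof of Proposition~\ref{mds20}.)

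\emph{The main obstacle.} The only real difficulty is the combinatorial bookkeeping of the second step: there are of the order of $\binom{5}{3}\,|B_5(4,3)|$ monomials to inspect, and one must check that the available stock of strictly inadmissible monomials is rich enough to eliminate every monomial outside the twenty-four listed generators. Producing the handful of new strictly inadmissible monomials of weight $(3,4,3)$ needed for this, together with the explicit hit-polynomial identities certifying them exactly as in Lemma~\ref{bdd21s}, is the only genuinely computational part; once those identities are in place, each individual reduction is a mechanical application of Theorem~\ref{dlcb1}(ii).
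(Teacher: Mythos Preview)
Your approach is essentially the same as the paper's: factor an admissible $x$ with $\omega(x)=\omega_{(3)}$ as $x_jx_\ell x_t y^2$ with $y\in B_5(4,3)$, then use Theorem~\ref{dlcb1}(ii) together with a stock of strictly inadmissible monomials to eliminate everything outside the list $\{b_t:396\leqslant t\leqslant 419\}$. The one correction is that Lemmas~\ref{bdd21} and~\ref{bdd21s} are tailored to $\omega_{(1)}=(3,2,2,1)$ and are of no use here; the paper instead invokes the dedicated Lemma~\ref{bdd23}, whose two families $x_jx_\ell^6x_t^3x_u^6x_v^7$ (with $j<\ell<t$) and $x_jx_\ell^2x_t^6x_u^7x_v^7$ have weight vector exactly $(3,4,3)$ and suffice, together with Proposition~\ref{mdb10}, to discard every candidate not on the list --- this is precisely the ``short additional list'' you anticipated needing.
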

 The following lemma is proved by a direct computation.
\begin{lems}\label{bdd23} The following monomials are strictly inadmissible:
 $$ x_jx_\ell^6x_t^3 x_u^6x_v^7, \ j < \ell < t;\ \ x_jx_\ell^2x_t^6 x_u^7x_v^7.$$
Here $(j,\ell,t,u,v) $ is a permutation of $(1,2,3,4,5)$.
\end{lems}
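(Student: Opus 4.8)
The plan is to establish strict inadmissibility for each of the two families by the same kind of explicit reduction carried out in the proof of Lemma~\ref{bdd21s}. One checks first that every monomial in either family has degree $23$ and weight vector $\omega(x)=(3,4,3)=\omega_{(3)}$; this is immediate from the dyadic expansions of the exponents $1,2,3,6,7$. Since the order $<$ on monomials of a fixed degree compares weight vectors first and exponent vectors afterwards, to show that a monomial $x$ with $\omega(x)=\omega_{(3)}$ is strictly inadmissible it is enough to exhibit an identity
\[
x \;=\; \sum_{i} y_i \;+\; Sq^1(h_1)+Sq^2(h_2)+Sq^4(h_4)\quad \text{mod}\,(P_5^-(\omega_{(3)})),
\]
in which each $y_i$ is a monomial with $\omega(y_i)=\omega_{(3)}$ and $\sigma(y_i)<\sigma(x)$, and $h_1,h_2,h_4\in P_5$ are suitable polynomials of degrees $22,21,19$ respectively; only the squares $Sq^1,Sq^2,Sq^4$ need to enter, as for every weight vector of length three (compare the proof of Lemma~\ref{mds22}).

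It suffices to produce such an identity for one representative of each family, say $x=x_1x_2^6x_3^3x_4^6x_5^7$ for the first and $x=x_1x_2^2x_3^6x_4^7x_5^7$ for the second; the remaining monomials in each family are treated by an entirely analogous computation. (Strict inadmissibility is not automatically inherited under permuting the variables --- a term $y_i$ of the same weight as $x$ need not remain below $x$ after a permutation --- so one genuinely redoes the reduction for each monomial, but with identical steps.) The polynomials $h_1,h_2,h_4$ are obtained by the standard ``$Sq$-peeling'' device: one writes an even exponent $2c$ occurring in $x$ as $2^a+(2c-2^a)$ for an appropriate power $2^a$, lowers that exponent to $2c-2^a$ in the corresponding variable, and recognises $x$, via the Cartan formula, as the leading term of $Sq^{2^a}$ applied to the monomial so obtained; iterating this, and discarding at each stage --- by passing to the quotient modulo $P_5^-(\omega_{(3)})$ --- every term whose weight vector has dropped strictly below $\omega_{(3)}$, one is left with $x$ together with finitely many monomials that are smaller than $x$ in the order $<$ but still of weight $\omega_{(3)}$. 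This reproduces exactly the pattern of the computation displayed in the proof of Lemma~\ref{bdd21s}.

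The only genuine difficulty is the bookkeeping: expanding $Sq^4$ of a degree-$19$ polynomial in five variables by the Cartan formula produces a large number of terms, and one must verify that after all cancellations the surviving monomials of weight exactly $\omega_{(3)}$ reduce to $x$ (occurring once) together with monomials strictly below $x$ under $\sigma$. This is a finite and mechanical check, best performed with computer assistance as for the parallel Lemmas~\ref{inad10}, \ref{bdd21s} and \ref{bdd22}; once it is completed, the strict inadmissibility --- hence the inadmissibility --- of every monomial in the two families follows at once.
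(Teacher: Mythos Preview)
Your proposal is correct and follows the same approach as the paper, which disposes of this lemma with the single sentence ``proved by a direct computation.'' Your outline of that computation --- expressing each monomial, modulo $P_5^-(\omega_{(3)})$, as a combination of $Sq^1,Sq^2,Sq^4$ applied to suitable polynomials plus monomials of the same weight but smaller $\sigma$ --- is precisely the pattern exhibited in the proof of Lemma~\ref{bdd21s}, and your remark that strict inadmissibility is not stable under permutations (so each monomial in the family requires its own check) is a correct and useful observation that the paper leaves implicit.
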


\begin{proof}[Proof of Lemma \ref{mds24}] Let $x$ be an admissible monomial in $P_5^+$ such that $\omega(x) = \omega_{(3)}$. Then $x = x_jx_\ell x_ty^2$ with $y \in B_5(4,3)$. 

Let $z \in B_5(4,3)$ such that $ x_jx_\ell x_tz^2 \in P_5^+$. 
By a direct computation using Proposition \ref{mdb10}, we see that if $x_jx_\ell x_tz^2\ne  b_t, \forall t, 396 \leqslant t \leqslant 419$, then there is a monomial $w$ which is given in Lemma \ref{bdd23} such that $x_jx_\ell x_tz^2= wz_1^{2^{u}}$ with suitable monomial $z_1 \in P_5$, and $u = \max\{j \in \mathbb Z : \omega_j(w) >0\}$. By Theorem \ref{dlcb1}, $x_jx_\ell x_tz^2$ is inadmissible. Since $x = x_jx_\ell x_ty^2$ with $y \in B_5(4,3)$ and $x$ is admissible, one gets $x = b_t$ for some $t,\ 396 \leqslant t \leqslant 419$. This implies $B_5^+(\omega_{(3)}) \subset \{b_t : 396 \leqslant t \leqslant 419\}$. The proposition follows. 
\end{proof}

\begin{proof}[Proof of Proposition \ref{mds20}] From Lemmas \ref{mds21}, \ref{mds22}, \ref{mds23} and \ref{mds24} we see that  the $\mathbb F_2$-vector space  $\text{\rm Ker}(\widetilde{Sq}^0_*)_{(5,9)}\cap (QP_5^+)_{23}$ is spanned by the set $\{[b_t]  : 1 \leqslant t \leqslant 419\}$.  Now we prove that the set $\{[b_t]  : 1 \leqslant t \leqslant 419\}$ is linearly independent in $QP_5$. Suppose there is a linear relation
$$\mathcal S = \sum_{t = 1}^{419}\gamma_tb_t \equiv 0,$$ 
where $\gamma_t \in \mathbb F_2$. We explicitly compute $p_{(i;j)}(\mathcal S)$ in terms of the admissible monomials in $P_4$. From the relations $p_{(i;j)}(\mathcal S) \equiv 0$ with $1 \leqslant i < j \leqslant 5$, one gets $\gamma_t=0$ for all $1 \leqslant t \leqslant 419.$
\end{proof}

\begin{corls} Under the above notations, we have
$$\dim QP_5(\omega_{(1)}) =  925,\ \dim QP_5(\omega_{(2)}) =  105,\ \dim QP_5(\omega_{(3)}) =  24.$$
\end{corls}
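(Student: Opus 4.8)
The plan is to obtain the corollary by adding, weight vector by weight vector, the contribution of $QP_5^0$ to that of $QP_5^+$, using the refinement of Proposition \ref{2.7} to a fixed weight vector. First I would record that for any weight vector $\omega$ of degree $23$ there is a direct sum decomposition $QP_5(\omega)=QP_5^0(\omega)\oplus QP_5^+(\omega)$, where $QP_5^0(\omega):=QP_5(\omega)\cap QP_5^0$ and $QP_5^+(\omega)$ is as in the excerpt: since $P_5^0$ and $P_5^+$ are $\mathcal A$-submodules and every monomial belongs to a single $P_5(\omega)$, the admissible basis $[B_5(\omega)]_\omega$ of $QP_5(\omega)$ is the disjoint union of the classes of the admissible monomials with $a_1\cdots a_5=0$ and those with $a_1\cdots a_5>0$. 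Hence $\dim QP_5(\omega_{(j)})=\dim QP_5^0(\omega_{(j)})+\dim QP_5^+(\omega_{(j)})$ for $j=1,2,3$.

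Next I would pin down the $QP_5^+$-summands. Proposition \ref{mds20} gives that $\{[b_t]:1\leqslant t\leqslant 419\}$ is linearly independent in $QP_5$ and that every $b_t$ lies in $P_5^+$; under the identification $QP_5(\omega)\cong QP_5^\omega\subset QP_5$ this forces the three blocks $\{[b_t]:1\leqslant t\leqslant 290\}$, $\{[b_t]:291\leqslant t\leqslant 395\}$ and $\{[b_t]:396\leqslant t\leqslant 419\}$ to be linearly independent in $QP_5^+(\omega_{(1)})$, $QP_5^+(\omega_{(2)})$ and $QP_5^+(\omega_{(3)})$ respectively. Combined with Lemma \ref{mds21} (the first block spans $QP_5^+(\omega_{(1)})$) this yields $\dim QP_5^+(\omega_{(1)})=290$; combined with Lemmas \ref{mds23} and \ref{mds24} — which assert that the second and third blocks span the \emph{entire} spaces $QP_5(\omega_{(2)})$ and $QP_5(\omega_{(3)})$ — it yields $\dim QP_5(\omega_{(2)})=\dim QP_5^+(\omega_{(2)})=105$ together with $QP_5^0(\omega_{(2)})=0$, and $\dim QP_5(\omega_{(3)})=\dim QP_5^+(\omega_{(3)})=24$ together with $QP_5^0(\omega_{(3)})=0$. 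This already settles the last two equalities of the corollary.

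It then remains to compute $\dim QP_5^0(\omega_{(1)})$. Here I would first observe that an admissible monomial $x\in P_5^0$ of degree $23$ necessarily has $\omega_1(x)=3$: Theorem \ref{dlsig} applied to the minimal spike $x_1^{15}x_2^7x_3$ forces $\omega_1(x)\geqslant 3$, while $\omega_1(x)\leqslant 4$ because $x$ omits a variable, and $\omega_1(x)$ has the parity of $23$. Thus $x=x_ix_jx_\ell\,y^2$ with $y$ admissible of degree $10$, so by Lemma \ref{bdday2} one has $\omega(x)\in\{\omega_{(1)},\omega_{(2)},\omega_{(3)},\omega_{(4)}\}$; that is, $(QP_5^0)_{23}=\bigoplus_{j=1}^4 QP_5^0(\omega_{(j)})$. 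Since $QP_5^0(\omega_{(2)})=QP_5^0(\omega_{(3)})=0$ by the previous step and $QP_5^0(\omega_{(4)})\subseteq QP_5(\omega_{(4)})=0$ by Lemma \ref{mds22}, it follows that $\dim QP_5^0(\omega_{(1)})=\dim(QP_5^0)_{23}=635$, the last value being the one already quoted from \cite{su1}. Adding up, $\dim QP_5(\omega_{(1)})=635+290=925$.

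All of this is bookkeeping once Lemmas \ref{mds21}, \ref{mds22}, \ref{mds23}, \ref{mds24} and Proposition \ref{mds20} are in hand; the step I would treat most carefully is the use of Lemmas \ref{mds23} and \ref{mds24} in their stated strength — that the listed monomials span the \emph{whole} of $QP_5(\omega_{(2)})$ and $QP_5(\omega_{(3)})$, not merely their $P_5^+$-parts — since it is exactly this that forces the $QP_5^0$-components at $\omega_{(2)}$ and $\omega_{(3)}$ to vanish, hence that all of $(QP_5^0)_{23}$ is concentrated in the single weight vector $\omega_{(1)}=(3,2,2,1)$. Should one prefer not to rely on that, the same concentration can be derived from Sum's four-variable computation \cite{su1} via Proposition \ref{mdp0} and the weight-preserving embeddings $f_i$, by checking that $(QP_4)_{23}$ — and therefore $(QP_5^0)_{23}$ — is supported only on the weight vector $(3,2,2,1)$.
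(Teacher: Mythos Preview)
Your argument is correct and matches the paper's (implicit) reasoning: the corollary is left unproved in the paper, but the surrounding text deduces the three dimensions exactly as you do, by combining Lemmas~\ref{mds21}--\ref{mds24} with the linear independence from Proposition~\ref{mds20}. The only cosmetic difference is in how $\dim QP_5(\omega_{(1)})=925$ is obtained: you compute it as $635+290$ by showing $(QP_5^0)_{23}$ is concentrated at $\omega_{(1)}$, whereas the paper gets it by subtraction, using $QP_5(\omega_{(4)})=0$ (Lemma~\ref{mds22}) to write $\mathrm{Ker}(\widetilde{Sq}^0_*)_{(5,9)}=QP_5(\omega_{(1)})\oplus QP_5(\omega_{(2)})\oplus QP_5(\omega_{(3)})$ and then $925=1054-105-24$.
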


Now we compute $(QP_5)_{23}^{GL_5}$. Since $(QP_5)_{9}^{GL_5} = 0$, using Theorem \ref{dlcb1}, we have
$(QP_5)_{23}^{GL_5} = \text{\rm Ker}(\widetilde{Sq}^0_*)_{(5,9)}^{GL_5}.$
Recall that 
\begin{align*}\text{\rm Ker}(\widetilde{Sq}^0_*)_{(5,9)} &= QP_5(\omega_{(1)}) \bigoplus QP_5(\omega_{(2)})\bigoplus QP_5(\omega_{(3)}).
\end{align*}

By Lemma \ref{mds24}, $\dim QP_5(\omega_{(3)}) = 24$ with the basis $[B_5(\bar b_1)]_{\omega_{(3)}}\cup [B_5(\bar b_2)]_{\omega_{(3)}}$, where $\bar b_{1} =  x_1x_2^{3}x_3^{6}x_4^{6}x_5^{7},\ \bar b_{2} =  x_1^{3}x_2^{3}x_3^{5}x_4^{6}x_5^{6}.$
\begin{props}\label{mdbs21}
$QP_5(\omega_{(3)})^{GL_5} = 0$.
\end{props}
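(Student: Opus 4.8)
The plan is to compute the $\Sigma_5$-invariants of $QP_5(\omega_{(3)})$ first, and then impose the extra condition coming from $g_5$ to get the full $GL_5$-invariants. Recall from Lemma \ref{mds24} that $QP_5(\omega_{(3)})$ is $24$-dimensional with basis $[B_5(\omega_{(3)})]_{\omega_{(3)}}$, where $B_5(\omega_{(3)})$ is the union of the $\Sigma_5$-orbits (intersected with the admissible set) of the two monomials $\bar b_1 = x_1x_2^{3}x_3^{6}x_4^{6}x_5^{7}$ and $\bar b_2 = x_1^{3}x_2^{3}x_3^{5}x_4^{6}x_5^{6}$. So the first step is to decompose $QP_5(\omega_{(3)})$ as a $\Sigma_5$-module, analogously to Lemma \ref{bdss1}: I would check whether $\langle[\Sigma_5(\bar b_1)]_{\omega_{(3)}}\rangle$ and $\langle[\Sigma_5(\bar b_2)]_{\omega_{(3)}}\rangle$ meet only in $0$ modulo $P_5^-(\omega_{(3)})$ and $\mathcal A^+P_5$, giving a direct sum decomposition into two summands (of dimensions $12$ and $12$, or whatever the orbit-admissible counts actually are, read off from Subsection \ref{s73}).

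Next I would compute the $\Sigma_5$-invariants of each summand separately. Writing a general element $f = \sum_{t}\gamma_t a_t$ (the $a_t$ being the admissible monomials of the relevant orbit, $396 \leqslant t \leqslant 419$) with $\gamma_t \in \mathbb F_2$, I impose $g_i(f) \equiv_{\omega_{(3)}} f$ for $i=1,2,3,4$ in turn, exactly as in the proof of Lemma \ref{bdss2}(ii). Each relation $g_i(f) + f \equiv_{\omega_{(3)}} 0$, expanded in terms of the admissible basis, yields a system of $\mathbb F_2$-linear equations on the $\gamma_t$; solving the cumulative system cuts the candidate invariant space down. I expect the outcome to be that $\langle[\Sigma_5(\bar b_j)]_{\omega_{(3)}}\rangle^{\Sigma_5} = \langle [q_j]\rangle$ is one-dimensional for $j=1,2$ (or possibly that one of them already vanishes), so that $QP_5(\omega_{(3)})^{\Sigma_5}$ has dimension at most $2$, spanned by explicit symmetrizations $q_1, q_2$.

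Finally I would impose the last generator $g_5$: for $f \equiv_{\omega_{(3)}} \gamma_1 q_1 + \gamma_2 q_2$, compute $g_5(f) + f$ in terms of the admissible monomials of degree $23$ and weight vector $\omega_{(3)}$, and read off the resulting linear conditions on $\gamma_1, \gamma_2$. I expect these to force $\gamma_1 = \gamma_2 = 0$, exactly as $g_5$ killed all six coefficients in the $s=1$ case at the end of Subsection 3.1. Hence $QP_5(\omega_{(3)})^{GL_5} = 0$.

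The main obstacle is purely computational: the maps $g_i$ do not preserve admissibility, so after applying $g_i$ one must rewrite each resulting monomial in terms of the admissible basis modulo $\mathcal A^+P_5 + P_5^-(\omega_{(3)})$, using the Cartan formula and the strictly-inadmissible relations (Lemmas \ref{inad10}, \ref{bdd23}, and the Kameko splitting in Theorem \ref{dlcb1}). Keeping these reductions correct and consistent across all five generators $g_1,\dots,g_5$ — and being careful that everything is done modulo $P_5^-(\omega_{(3)})$, not modulo $\mathcal A^+P_5$ alone — is the delicate part; the algebra itself is routine once the admissible rewriting rules for degree $23$, weight $\omega_{(3)}$, are in hand.
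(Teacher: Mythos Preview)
Your proposal is correct and follows essentially the same route as the paper: decompose $QP_5(\omega_{(3)})$ into the two $\Sigma_5$-submodules generated by $[\bar b_1]_{\omega_{(3)}}$ and $[\bar b_2]_{\omega_{(3)}}$ (Lemma~\ref{bdss21}), compute the $\Sigma_5$-invariants of each (Lemma~\ref{bdss22}), and then kill the survivor with $g_5$. For the record, the two summands have dimensions $20$ and $4$ (not $12$ and $12$), and the $\bar b_2$ piece already has $\Sigma_5$-invariants equal to $0$, so only a single coefficient $\gamma$ in front of $p(\bar b_1)$ remains before applying $g_5$.
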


By a direct computation we easily obtain the following lemma.
\begin{lems}\label{bdss21} We have a direct summand decomposition of the $\Sigma_5$-modules:
$$QP_5(\omega_{(3)}) = \langle [\Sigma_5(\bar b_1)]_{\omega_{(3)}}\rangle \bigoplus \langle [\Sigma_5(\bar b_2)]_{\omega_{(3)}}\rangle.$$
\end{lems}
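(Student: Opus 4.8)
The plan is to establish the direct summand decomposition by showing that the two $\Sigma_5$-orbits $\Sigma_5(\bar b_1)$ and $\Sigma_5(\bar b_2)$ meet the admissible basis $[B_5(\omega_{(3)})]_{\omega_{(3)}}$ in complementary subsets whose spans are $\Sigma_5$-submodules, and that these submodules intersect trivially. By Lemma \ref{mds24}, a basis of $QP_5(\omega_{(3)})$ is $[B_5(\bar b_1)]_{\omega_{(3)}}\cup [B_5(\bar b_2)]_{\omega_{(3)}}$, so the first task is to verify that this is an honest disjoint union, i.e. no admissible monomial of weight vector $\omega_{(3)}$ lies in both $\Sigma_5(\bar b_1)$ and $\Sigma_5(\bar b_2)$. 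Since the exponent multiset of $\bar b_1$ is $\{1,3,6,6,7\}$ and that of $\bar b_2$ is $\{3,3,5,6,6\}$, these are distinct as multisets, so $\Sigma_5(\bar b_1)\cap\Sigma_5(\bar b_2)=\emptyset$ already at the level of monomials. Hence $[B_5(\bar b_1)]_{\omega_{(3)}}$ and $[B_5(\bar b_2)]_{\omega_{(3)}}$ are disjoint linearly independent sets whose union is a basis, and $\langle[\Sigma_5(\bar b_1)]_{\omega_{(3)}}\rangle$, $\langle[\Sigma_5(\bar b_2)]_{\omega_{(3)}}\rangle$ are by construction $\Sigma_5$-submodules of $QP_5(\omega_{(3)})$.

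The remaining point is that the internal direct sum of these two submodules is all of $QP_5(\omega_{(3)})$. For this I would argue that $\langle[\Sigma_5(\bar b_i)]_{\omega_{(3)}}\rangle$ is spanned (as an $\mathbb F_2$-vector space) precisely by $[B_5(\bar b_i)]_{\omega_{(3)}}$ for $i=1,2$. The containment $\supseteq$ is immediate since each element of $B_5(\bar b_i)$ is an admissible monomial in the orbit $\Sigma_5(\bar b_i)$. For $\subseteq$, note that $\langle[\Sigma_5(\bar b_i)]_{\omega_{(3)}}\rangle$ is by definition $[B_5(\omega_{(3)})]_{\omega_{(3)}}\cap\langle[\Sigma_5(\bar b_i)]_{\omega_{(3)}}\rangle$ in the notation of Section \ref{s2}; an element here is a class of an admissible monomial which, modulo $\mathcal A^+P_5+P_5^-(\omega_{(3)})$, is a sum of permuted copies of $\bar b_i$, and since distinct admissible monomials have linearly independent classes, such an admissible monomial must itself be a permutation of $\bar b_i$, hence lie in $B_5(\bar b_i)$. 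Therefore $\langle[\Sigma_5(\bar b_1)]_{\omega_{(3)}}\rangle + \langle[\Sigma_5(\bar b_2)]_{\omega_{(3)}}\rangle = \langle[B_5(\bar b_1)]_{\omega_{(3)}}\cup[B_5(\bar b_2)]_{\omega_{(3)}}\rangle = QP_5(\omega_{(3)})$, and the sum is direct because the two spanning sets are disjoint parts of a single basis.

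The one step that genuinely requires computation — and the place where an error could hide — is verifying that each of the $24$ admissible monomials listed in Subsection \ref{s73} under $\omega_{(3)}$ is in fact a permutation of either $\bar b_1$ or $\bar b_2$, equivalently that $B_5^+(\omega_{(3)})$ partitions into the two orbit pieces $B_5(\bar b_1)$ and $B_5(\bar b_2)$ with the stated sizes. This is exactly the content of the "direct computation" invoked before the statement, and it relies on the explicit list produced in the proof of Lemma \ref{mds24}. Granting that list, the decomposition is a formal consequence of the orbit structure as above; the main obstacle is purely bookkeeping, namely keeping the $24$ monomials correctly sorted into the two $\Sigma_5$-orbits and confirming no admissible monomial of weight $\omega_{(3)}$ was omitted.
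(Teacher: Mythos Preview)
Your argument has a genuine gap. You assert (and your whole strategy rests on) the claim that each of the $24$ admissible monomials $b_{396},\ldots,b_{419}$ is a permutation of either $\bar b_1=x_1x_2^{3}x_3^{6}x_4^{6}x_5^{7}$ or $\bar b_2=x_1^{3}x_2^{3}x_3^{5}x_4^{6}x_5^{7}$. This is false: for instance $b_{404}=x_1^{3}x_2^{5}x_3^{2}x_4^{6}x_5^{7}$ has exponent multiset $\{2,3,5,6,7\}$, which matches neither $\{1,3,6,6,7\}$ nor $\{3,3,5,6,6\}$. In fact the $24$ admissible monomials of weight $\omega_{(3)}$ split into \emph{three} $\Sigma_5$-orbits at the level of monomials: ten with exponent multiset $\{1,3,6,6,7\}$, ten with $\{2,3,5,6,7\}$, and four with $\{3,3,5,6,6\}$. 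So the decomposition cannot be read off from the monomial orbit structure alone.

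What actually makes the lemma true is that the $\Sigma_5$-submodule $\langle[\Sigma_5(\bar b_1)]_{\omega_{(3)}}\rangle$ is \emph{larger} than the span of the admissible permutations of $\bar b_1$: when an inadmissible permutation of $\bar b_1$ is rewritten in the admissible basis (modulo $\mathcal A^+P_5+P_5^-(\omega_{(3)})$), monomials from the $\{2,3,5,6,7\}$ orbit appear. A direct computation shows that $\langle[\Sigma_5(\bar b_1)]_{\omega_{(3)}}\rangle$ is $20$-dimensional, spanned by the admissible monomials in the two orbits $\{1,3,6,6,7\}$ and $\{2,3,5,6,7\}$, while $\langle[\Sigma_5(\bar b_2)]_{\omega_{(3)}}\rangle$ is $4$-dimensional with basis the four admissible permutations of $\bar b_2$ (as confirmed in the proof of Lemma~\ref{bdss22}). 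The direct sum then follows. This is precisely the ``direct computation'' the paper invokes, and your conceptual shortcut does not avoid it: the step ``such an admissible monomial must itself be a permutation of $\bar b_i$'' is where the argument breaks, since classes of inadmissible permutations of $\bar b_i$ are not themselves basis vectors and can involve admissible monomials outside $\Sigma_5(\bar b_i)$.
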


\begin{lems}\label{bdss22}
$\langle [\Sigma_5(\bar b_1)]_{\omega_{(3)}}\rangle^{\Sigma_5} = \langle [p(\bar b_1)]_{\omega_{(3)}}\rangle$ and $\langle [\Sigma_5(\bar b_2)]_{\omega_{(3)}}\rangle^{\Sigma_5} = 0$.
\end{lems}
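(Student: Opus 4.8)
The plan is to follow the template of the proof of Lemma \ref{bdss2}, now carried out modulo the relation $\equiv_{\omega_{(3)}}$. By Lemma \ref{bdss21} we have the $\Sigma_5$-module splitting $QP_5(\omega_{(3)}) = \langle[\Sigma_5(\bar b_1)]_{\omega_{(3)}}\rangle \oplus \langle[\Sigma_5(\bar b_2)]_{\omega_{(3)}}\rangle$, so it suffices to describe the $\Sigma_5$-invariants of each summand separately. Since $\Sigma_5$ is generated by $g_1,g_2,g_3,g_4$, a homogeneous polynomial $f$ represents a $\Sigma_5$-invariant class in $QP_5(\omega_{(3)})$ precisely when $g_i(f) \equiv_{\omega_{(3)}} f$ for $i=1,2,3,4$.

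First I would treat $\langle[\Sigma_5(\bar b_1)]_{\omega_{(3)}}\rangle$. By Lemma \ref{mds24} this space has the explicit basis $[B_5(\bar b_1)]_{\omega_{(3)}}$, consisting of the classes of those admissible monomials $b_t$, $396 \leqslant t \leqslant 419$, that lie in the $\Sigma_5$-orbit of $\bar b_1 = x_1x_2^{3}x_3^{6}x_4^{6}x_5^{7}$; denote them $w_1,\ldots,w_{d_1}$ and write a general element $f = \sum_{t=1}^{d_1}\gamma_t w_t$ with $\gamma_t \in \mathbb F_2$. For each $i \in \{1,2,3,4\}$ I would expand $g_i(f)+f$, rewrite every monomial that occurs in admissible form modulo $\mathcal A^+P_5 + P_5^-(\omega_{(3)})$ — using the Cartan formula together with Theorem \ref{dlcb1} and the strictly inadmissible monomials collected in Lemma \ref{bdd23} — and then read off, from $g_i(f)+f \equiv_{\omega_{(3)}} 0$, the resulting linear conditions on the $\gamma_t$. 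Accumulating the conditions coming from $i=1,2,3,4$, I expect the solution space to have dimension one, with the unique nonzero solution being the all-ones combination $\sum_t w_t$; since by definition $p(\bar b_1) = \sum_{y \in B_5(23)\cap\Sigma_5(\bar b_1)} y$ is exactly this sum, this gives $\langle[\Sigma_5(\bar b_1)]_{\omega_{(3)}}\rangle^{\Sigma_5} = \langle[p(\bar b_1)]_{\omega_{(3)}}\rangle$.

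For $\langle[\Sigma_5(\bar b_2)]_{\omega_{(3)}}\rangle$ I would run the identical procedure, now with the basis $[B_5(\bar b_2)]_{\omega_{(3)}}$ coming from the $\Sigma_5$-orbit of $\bar b_2 = x_1^{3}x_2^{3}x_3^{5}x_4^{6}x_5^{6}$. Here I expect the linear system assembled from $g_1(f)+f,\ldots,g_4(f)+f$ to force all coefficients to vanish, so that $\langle[\Sigma_5(\bar b_2)]_{\omega_{(3)}}\rangle^{\Sigma_5} = 0$. Combining the two computations proves the lemma.

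The main obstacle is the volume and delicacy of the reductions rather than any conceptual point: one must apply each of $g_1,\ldots,g_4$ to all of the (roughly two dozen) basis monomials between the two orbits and rewrite each outcome in admissible form. Although a transposition of variables preserves the weight vector, rewriting an inadmissible monomial of weight $\omega_{(3)}$ ordinarily produces admissible monomials of strictly smaller weight as well, which must be discarded; and one must check, using the direct-sum splitting of Lemma \ref{bdss21}, that only basis monomials of the relevant orbit survive. Keeping every step modulo $\mathcal A^+P_5 + P_5^-(\omega_{(3)})$ inside the appropriate summand is what makes this bookkeeping tractable.
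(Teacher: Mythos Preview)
Your approach is correct and essentially the same as the paper's: write a general element in the given $\Sigma_5$-submodule in terms of the admissible basis from Lemma \ref{mds24}, impose $g_i(f)+f\equiv_{\omega_{(3)}}0$ for $i=1,2,3,4$, and solve the resulting linear system over $\mathbb F_2$. One practical difference worth noting: the paper details the $\bar b_2$-case explicitly (its orbit contributes only the four admissible monomials $u_1,\ldots,u_4$, and each $g_i(f)+f$ reduces to a single monomial, giving four simple relations that force $\gamma_1=\cdots=\gamma_4=0$), and then dismisses the $\bar b_1$-case with ``by a similar computation''. Since the $\bar b_1$-orbit contains the remaining twenty admissible monomials, this ordering minimises the written-out computation; you may want to adopt the same choice rather than tackling the twenty-element system first.
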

\begin{proof} From Lemma \ref{mds24}, we see that $\dim\langle[\Sigma_5(\bar b_2)]_{\omega_{(3)}}\rangle =4$ with a basis consisting of the classes represented by the following monomials:
\begin{align*}u_{1} =  x_1^{3}x_2^{3}x_3^{5}x_4^{6}x_5^{6},\  u_{2} =  x_1^{3}x_2^{5}x_3^{3}x_4^{6}x_5^{6},\  u_{3} =  x_1^{3}x_2^{5}x_3^{6}x_4^{3}x_5^{6},\  u_{4} =  x_1^{3}x_2^{5}x_3^{6}x_4^{6}x_5^{3}.
\end{align*}
Suppose $f = \sum_{t=1}^{4}\gamma_tu_t$ with $\gamma_t \in \mathbb F_2$ and $[f] \in \langle[\Sigma_5(\bar b_2)]_{\omega_{(3)}}\rangle^{\Sigma_5}$. By a direct computation, we have
\begin{align*} g_1(f) + f &\equiv_{\omega_{(3)}} (\gamma_{2}  + \gamma_{3} + \gamma_{4}) u_{1}   \equiv_{\omega_{(3)}}  0,\\
 g_2(f) + f &\equiv_{\omega_{(3)}} (\gamma_{1}  + \gamma_{2}) u_{1}  \equiv_{\omega_{(3)}}  0,\\ 
g_3(f) + f &\equiv_{\omega_{(3)}} (\gamma_{2}  + \gamma_{3}) u_{2} \equiv_{\omega_{(3)}}  0,\\
 g_4(f) + f &\equiv_{\omega_{(3)}} (\gamma_{3}  + \gamma_{4}) u_{3}  \equiv_{\omega_{(3)}}  0.
\end{align*}
From the above relations one gets $\gamma_t =0$ for $t=1,2,3,4$. By a similar computation we obtain $\langle [\Sigma_5(\bar b_1)]_{\omega_{(3)}}\rangle^{\Sigma_5} = \langle [p(\bar b_1)]_{\omega_{(3)}}\rangle$.
\end{proof}
\begin{proof}[Proof of Proposition \ref{mdbs21}] Let $f \in P_5(\omega_{(3)})$ such that $[f]_{\omega_{(3)}} \in QP_5(\omega_{(3)})^{GL_5}$. Since $[f]_{\omega_{(3)}} \in QP_5(\omega_{(3)})^{\Sigma_5}$, using Lemmas \ref{bdss21},  and \ref{bdss22}, we have $f \equiv_{\omega_{(3)}} \gamma p(\bar b_1)$ with $\gamma \in \mathbb F_2$. By computing $g_5(f)+f$ in terms of the admissible monomials, we obtain
\begin{align*}g_5(f)+f \equiv_{\omega_{(3)}} \gamma \bar b_1 + \text{ other terms} \equiv_{\omega_{(3)}} 0.
\end{align*}
This relation implies $\gamma = 0$. The proposition is proved.
\end{proof}

\begin{props}\label{mdbs22}
$QP_5(\omega_{(2)})^{GL_5} = 0$.
\end{props}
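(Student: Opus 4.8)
The plan is to follow exactly the scheme that worked for $QP_5(\omega_{(3)})^{GL_5}$ in Proposition \ref{mdbs21}, now applied to $\omega_{(2)} = (3,4,1,1)$. By Lemma \ref{mds23} we have an explicit basis $\{[b_t]_{\omega_{(2)}} : 291 \leqslant t \leqslant 395\}$ of $QP_5(\omega_{(2)})$, which is $105$-dimensional. First I would group these $105$ monomials into $\Sigma_5$-orbits, identifying a short list of orbit representatives, and establish a direct summand decomposition of $QP_5(\omega_{(2)})$ as a $\Sigma_5$-module into the cyclic submodules $\langle [\Sigma_5(\cdot)]_{\omega_{(2)}}\rangle$ generated by those representatives (the analogue of Lemma \ref{bdss21}); since the weight vector is permutation-invariant, each such orbit submodule is genuinely $\Sigma_5$-stable.

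Next I would compute, for each orbit representative $z$, the space of $\Sigma_5$-invariants $\langle [\Sigma_5(z)]_{\omega_{(2)}}\rangle^{\Sigma_5}$ (the analogue of Lemma \ref{bdss22}): for a generic monomial $f = \sum \gamma_t u_t$ ranging over the orbit basis, expand $g_i(f)+f$ modulo $P_5^-(\omega_{(2)})$ in terms of the admissible monomials for $i=1,2,3,4$ and extract the resulting linear system on the $\gamma_t$. This pins down each $\langle [\Sigma_5(z)]_{\omega_{(2)}}\rangle^{\Sigma_5}$ as either $0$ or the span of the symmetrization $[p(z)]_{\omega_{(2)}}$ (possibly a small-dimensional space if several orbits merge, as happened with $p_1,p_2$ in Lemma \ref{bdss2}). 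Summing over orbits gives an explicit spanning set for $QP_5(\omega_{(2)})^{\Sigma_5}$.

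Finally, writing any $[f]_{\omega_{(2)}} \in QP_5(\omega_{(2)})^{GL_5} \subset QP_5(\omega_{(2)})^{\Sigma_5}$ as a linear combination $f \equiv_{\omega_{(2)}} \sum_j \gamma_j p_j$ of the $\Sigma_5$-invariant generators found above, I would impose the remaining relation $g_5(f) \equiv_{\omega_{(2)}} f$. Expanding $g_5(f)+f$ in terms of admissible monomials and reading off coefficients should force all $\gamma_j = 0$, giving $QP_5(\omega_{(2)})^{GL_5} = 0$. The main obstacle is purely computational: the orbit basis here is larger ($105$ elements versus $24$ for $\omega_{(3)}$), so the matrices for $g_i(f)+f$ are correspondingly bigger, and one must be careful that every monomial appearing is rewritten in the admissible basis modulo $P_5^-(\omega_{(2)})$ using Theorems \ref{dlcb1} and \ref{dlsig} before comparing coefficients; there is no new conceptual difficulty, only bookkeeping, and the $g_5$-step is expected to be the decisive one that kills all invariants.
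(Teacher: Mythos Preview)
Your proposal is correct and matches the paper's approach essentially verbatim: the paper identifies four $\Sigma_5$-orbit representatives $a_1,\dots,a_4$, proves the direct-sum decomposition $QP_5(\omega_{(2)}) = \bigoplus_{j=1}^4 \langle [\Sigma_5(a_j)]_{\omega_{(2)}}\rangle$ (Lemma~\ref{bdss23}), shows each summand has one-dimensional $\Sigma_5$-invariants spanned by $[p(a_j)]_{\omega_{(2)}}$ for $j=1,2,3$ and by an explicit $[p_4]_{\omega_{(2)}}$ for $j=4$ (Lemma~\ref{bdss24}), and then the relation $g_5(f)+f\equiv_{\omega_{(2)}}0$ forces all four coefficients to vanish. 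Your anticipation that some invariant might not be a plain symmetrization is borne out by $p_4$.
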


By computing from Lemma \ref{mds23}, we see that $\dim QP_5(\omega_{(2)}) = 105$ with the basis $\bigcup_{j=1}^4[B(a_j)]_{\omega_{(2)}}$, where
\begin{align*}&a_{1} =  x_1x_2^{2}x_3^{2}x_4^{3}x_5^{15},\ a_{2} =  x_1x_2^{2}x_3^{2}x_4^{7}x_5^{11},\ a_{3} =  x_1x_2^{3}x_3^{3}x_4^{6}x_5^{10},\ a_{4} =  x_1x_2^{2}x_3^{3}x_4^{6}x_5^{11}.
\end{align*}
 By a direct computation, we obtain the following.

\begin{lems}\label{bdss23} We have a direct summand decomposition of the $\Sigma_5$-modules: 
$$QP_5(\omega_{(2)}) = \bigoplus_{j=1}^4 \langle [\Sigma_5(a_j)]_{\omega_{(2)}}\rangle.$$
\end{lems}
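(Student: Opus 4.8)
The plan is to read off the decomposition from the explicit basis of $QP_5(\omega_{(2)})$ already produced in Lemma \ref{mds23}, and then to check that the $\Sigma_5$-action does not mix the four orbit pieces. Recall that Lemma \ref{mds23} exhibits the basis $\{[b_t]_{\omega_{(2)}} : 291 \leqslant t \leqslant 395\}$ of $QP_5(\omega_{(2)})$, the $b_t$ being the admissible monomials of weight vector $\omega_{(2)}$ listed in Subsection \ref{s73}. Inspecting that list, each $b_t$ is a permutation of exactly one of $a_1,a_2,a_3,a_4$; since distinct $\Sigma_5$-orbits of monomials are disjoint, the basis splits as a disjoint union $[B_5(\omega_{(2)})]_{\omega_{(2)}} = \bigsqcup_{j=1}^{4}[B(a_j)]_{\omega_{(2)}}$, where $[B(a_j)]_{\omega_{(2)}}$ collects those $[b_t]_{\omega_{(2)}}$ lying in the orbit of $a_j$. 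Hence $QP_5(\omega_{(2)}) = \bigoplus_{j=1}^{4}\langle[B(a_j)]_{\omega_{(2)}}\rangle$ as $\mathbb F_2$-vector spaces, and, $a_j$ being admissible, $[a_j]_{\omega_{(2)}}\in\langle[B(a_j)]_{\omega_{(2)}}\rangle$.

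The only thing left is to identify the subspace $\langle[B(a_j)]_{\omega_{(2)}}\rangle$ with the $\Sigma_5$-submodule $\langle[\Sigma_5(a_j)]_{\omega_{(2)}}\rangle$. The inclusion $\langle[B(a_j)]_{\omega_{(2)}}\rangle\subseteq\langle[\Sigma_5(a_j)]_{\omega_{(2)}}\rangle$ is immediate from the definition of $[B(a_j)]_{\omega_{(2)}}$, so it suffices to prove that $\langle[B(a_j)]_{\omega_{(2)}}\rangle$ is stable under $\Sigma_5$: then it is a submodule containing $[a_j]_{\omega_{(2)}}$, hence contains the submodule that class generates, giving the reverse inclusion. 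As $\Sigma_5 = \langle g_1,g_2,g_3,g_4\rangle$, it is enough to check, for each $j$, each $b\in B(a_j)$ and each $i\in\{1,2,3,4\}$, that $[g_i(b)]_{\omega_{(2)}}\in\langle[B(a_j)]_{\omega_{(2)}}\rangle$. Since $g_i$ preserves degree and weight vector, $g_i(b)\in P_5(\omega_{(2)})$ and is a permutation of $a_j$; if $g_i(b)$ is admissible it already belongs to $B(a_j)$ and there is nothing to do, while if it is inadmissible one rewrites it modulo $\mathcal A^+P_5 + P_5^-(\omega_{(2)})$ as a sum of admissible monomials — using Theorem \ref{dlcb1}, Proposition \ref{mdb10}, Lemma \ref{mds23} and the strictly inadmissible monomials of Lemmas \ref{bdd21}, \ref{bdd21s}, \ref{bdd22} — and verifies that every admissible monomial that appears again lies in $B(a_j)$. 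Performing this for all four generators and all $105$ basis monomials establishes the required $\Sigma_5$-stability.

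Granting this, $\langle[\Sigma_5(a_j)]_{\omega_{(2)}}\rangle = \langle[B(a_j)]_{\omega_{(2)}}\rangle$ for $j=1,\dots,4$, and the vector-space decomposition above becomes the asserted direct summand decomposition of $\Sigma_5$-modules; as a numerical check, $\dim\langle[\Sigma_5(a_j)]_{\omega_{(2)}}\rangle = |B(a_j)|$ and $\sum_{j=1}^{4}|B(a_j)| = 105 = \dim QP_5(\omega_{(2)})$, consistent with the preceding Corollary. The main obstacle is the last step: it is a finite but lengthy bookkeeping, since for each of the $105$ admissible monomials of weight vector $\omega_{(2)}$ and each of the four adjacent transpositions one must either recognize $g_i(b)$ as admissible or carry out an explicit reduction to admissible form and inspect the monomials produced; no conceptual difficulty arises once the list of Subsection \ref{s73} and the inadmissibility lemmas are available.
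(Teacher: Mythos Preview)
Your proof has a genuine gap at the very first step. You claim that ``each $b_t$ is a permutation of exactly one of $a_1,a_2,a_3,a_4$,'' but this is false. For instance, $b_{295} = x_1x_2^{2}x_3^{3}x_4^{3}x_5^{14}$ has exponent multiset $\{1,2,3,3,14\}$, which does not coincide with any of the multisets $\{1,2,2,3,15\}$, $\{1,2,2,7,11\}$, $\{1,3,3,6,10\}$, $\{1,2,3,6,11\}$ of $a_1,a_2,a_3,a_4$. Many other basis monomials (e.g.\ $b_{297}$, $b_{357}$, $b_{360}$, $b_{371}$, $b_{372}$) likewise lie in none of these four $\Sigma_5$-orbits of monomials. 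Thus the admissible basis does \emph{not} split along monomial orbits in the way you assert, the purported equality $[B_5(\omega_{(2)})]_{\omega_{(2)}} = \bigsqcup_{j=1}^{4}[B(a_j)]_{\omega_{(2)}}$ (with your ad hoc meaning of $[B(a_j)]$) fails, and the subsequent stability argument has nothing to stand on.

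The point you are missing is that $\langle[\Sigma_5(a_j)]_{\omega_{(2)}}\rangle$ is the span of the classes $[\sigma a_j]_{\omega_{(2)}}$, and when $\sigma a_j$ is inadmissible its class is a nontrivial $\mathbb F_2$-linear combination of admissible classes that need not themselves be permutations of $a_j$. What the paper's ``direct computation'' actually requires is to expand every $[\sigma a_j]_{\omega_{(2)}}$ in the admissible basis (using the inadmissibility lemmas), thereby determining exactly which of the $[b_t]_{\omega_{(2)}}$ belong to each submodule, and then to verify that the resulting four subspaces cover all $105$ basis classes and intersect trivially. This is a substantially different and heavier computation than the orbit bookkeeping you describe, and cannot be bypassed.
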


\begin{lems}\label{bdss24}
$\langle [\Sigma_5(a_j)]_{\omega_{(2)}}\rangle^{\Sigma_5} = \langle [p(a_j)]_{\omega_{(2)}}\rangle,\ j =1,2,3,$ and $\langle [\Sigma_5(a_4)]_{\omega_{(2)}}\rangle^{\Sigma_5} = \langle [p_4]_{\omega_{(2)}}]\rangle$, where the polynomial $p_4$ is explicitly determined as in the Subsection \ref{s74}.
\end{lems}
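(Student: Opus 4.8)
The plan is to verify the claim of Lemma~\ref{bdss24} by the same strategy used for Lemma~\ref{bdss2}(ii) and Lemma~\ref{bdss22}: for each orbit summand $\langle[\Sigma_5(a_j)]_{\omega_{(2)}}\rangle$ one writes a general $\Sigma_5$-invariant element $f$ as an unknown $\mathbb F_2$-linear combination of the admissible monomials spanning that summand (these are listed via Lemma~\ref{mds23} and Subsection~\ref{s73}), and then forces $g_i(f) \equiv_{\omega_{(2)}} f$ for $i=1,2,3,4$. First I would record, for each $j$, the explicit basis $[B(a_j)]_{\omega_{(2)}}$ of $\langle[\Sigma_5(a_j)]_{\omega_{(2)}}\rangle$ coming from the list in Subsection~\ref{s73}; by Lemma~\ref{bdss23} these four sets are disjoint and their union is a basis of $QP_5(\omega_{(2)})$, so it suffices to treat each $j$ in isolation.

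Next, for a fixed $j$ and $f = \sum_t \gamma_t u_t$ ranging over that basis, I would compute $g_i(f)+f$ for $i=1,2,3$ (the symmetric-group generators), rewrite the result modulo $\mathcal A^+P_5 + P_5^-(\omega_{(2)})$ in terms of the admissible basis, and collect the coefficient of each admissible monomial. Setting all these coefficients to zero yields a linear system in the $\gamma_t$; solving it cuts the space of $\Sigma_5$-invariants down. For $j=1,2,3$ I expect the system to force $f$ to be a scalar multiple of the symmetrization $p(a_j) = \sum_{y \in B_5(n)\cap\Sigma_5(a_j)}y$, exactly as in Lemma~\ref{bdss2}(i) and Lemma~\ref{bdss22}; since $p(a_j)$ is manifestly $\Sigma_5$-invariant (it is a full orbit sum, restricted to admissibles, hence fixed up to $\equiv$ by every permutation), this gives $\langle[\Sigma_5(a_j)]_{\omega_{(2)}}\rangle^{\Sigma_5} = \langle[p(a_j)]_{\omega_{(2)}}\rangle$. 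For $j=4$ the orbit $\Sigma_5(a_4)$ with $a_4 = x_1x_2^{2}x_3^{3}x_4^{6}x_5^{11}$ has all five exponents distinct, so its admissible part is larger and the invariant subspace is spanned by a single less-obvious polynomial $p_4$; I would then exhibit $p_4$ explicitly (deferring its lengthy description to Subsection~\ref{s74}) as the unique-up-to-scalar solution of the linear system, and check directly that $g_i(p_4) \equiv_{\omega_{(2)}} p_4$ for $i=1,2,3$.

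The main obstacle will be the bookkeeping in the $j=4$ case: because $a_4$ has distinct exponents, $|\Sigma_5(a_4) \cap B_5(n)|$ is comparatively large, so rewriting each $g_i(f)+f$ in terms of admissible monomials requires repeated application of the Cartan-formula reductions and of the strictly-inadmissible relations of Lemmas~\ref{bdd21}, \ref{bdd21s}, \ref{bdd22} together with Theorem~\ref{dlcb1}; the resulting linear system, while over $\mathbb F_2$ and hence mechanical, is sizable, and one must be careful that the reductions are carried out modulo $P_5^-(\omega_{(2)})$ consistently (so that $\equiv_{\omega_{(2)}}$, not just $\equiv$, is respected). A secondary point to handle carefully is that $p_4$ as produced by the linear algebra is only determined modulo $\mathcal A^+P_5 + P_5^-(\omega_{(2)})$; writing down a clean polynomial representative — which is what Subsection~\ref{s74} does — is a matter of choosing a convenient element of that coset, and one should double-check that the chosen representative really is $\Sigma_5$-fixed on the nose in $QP_5(\omega_{(2)})$. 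Once the four systems are solved, Lemma~\ref{bdss24} follows immediately, and it feeds into the proof of Proposition~\ref{mdbs22} exactly as Lemma~\ref{bdss22} fed into Proposition~\ref{mdbs21}.
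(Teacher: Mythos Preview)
Your proposal is correct and follows exactly the approach the paper uses for the analogous Lemmas~\ref{bdss2} and~\ref{bdss22}; indeed the paper does not spell out a proof of Lemma~\ref{bdss24} at all, treating it as a routine computation of precisely the kind you describe. One small slip: you twice write ``$g_i(f)+f$ for $i=1,2,3$'', but $\Sigma_5$ is generated by $g_1,g_2,g_3,g_4$ (the four adjacent transpositions, cf.\ the paragraph before Lemma~\ref{bdm}), so you must also impose $g_4(f)\equiv_{\omega_{(2)}} f$; with only three generators you would be computing $\Sigma_4$-invariants, not $\Sigma_5$-invariants.
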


\begin{proof}[Proof of Proposition \ref{mdbs22}] Let $f \in P_5(\omega_{(2)})$ such that $[f]_{\omega_{(2)}} \in QP_5(\omega_{(2)})^{GL_5}$. Since $[f]_{\omega_{(2)}} \in QP_5(\omega_{(2)})^{\Sigma_5}$, using Lemmas \ref{bdss23},  and \ref{bdss24}, we have 
$$f \equiv_{\omega_{(2)}} \gamma_1p(a_1) +  \gamma_2p(a_2) +  \gamma_3p(a_3) +  \gamma_4p_4,$$ 
with $\gamma_j \in \mathbb F_2$. By computing $g_5(f)+f$ in terms of the admissible monomials, we obtain
\begin{align*}g_5(f)+f &\equiv_{\omega_{(2)}} \gamma_1 x_1^{7}x_2^{3}x_3x_4^{2}x_5^{10} + \gamma_2 x_1^{3}x_2^{3}x_3x_4^{2}x_5^{14} + \gamma_3 x_1x_2^{7}x_3^{2}x_4^{3}x_5^{10}\\ 
&\hskip2cm+ (\gamma_1 + \gamma_4) x_1^{3}x_2^{13}x_3^{2}x_4^{2}x_5^{3} +  \text{ other terms} \equiv_{\omega_{(2)}} 0.
\end{align*}
This relation implies $\gamma_j = 0$ with $j = 1,2,3,4$. The proposition  is proved.
\end{proof}

\begin{props}\label{mdbs23}
$QP_5(\omega_{(1)})^{GL_5} = 0$.
\end{props}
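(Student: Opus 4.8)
The plan is to carry out for the weight vector $\omega_{(1)} = (3,2,2,1)$ the same three-stage argument used to prove Propositions~\ref{mdbs21} and~\ref{mdbs22}. Since $\omega_{(1)}$ is the weight vector of the minimal spike $x_1^{15}x_2^{7}x_3$ of degree $23$ in $P_5$, the remark following Theorem~\ref{dlsig} gives $[B]_{\omega_{(1)}} = [B]$ for any set $B$ of monomials of that weight; hence we may work with the explicit basis $[B_5(\omega_{(1)})]_{\omega_{(1)}}$ of $QP_5(\omega_{(1)})$, which has $925$ elements and consists of the $290$ admissible monomials of $P_5^+$ produced in Lemma~\ref{mds21} together with the $635$ admissible monomials of $P_5^0$ of this weight (all of them exhibited as the $b_t$ in Subsection~\ref{s73}). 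An element $f$ with $[f]_{\omega_{(1)}} \in QP_5(\omega_{(1)})^{GL_5}$ is precisely one satisfying $g_i(f) + f \equiv_{\omega_{(1)}} 0$ for $1 \le i \le 5$, so the task is to show that no nonzero such $f$ exists.

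First I would organize the $925$ basis monomials into their $\Sigma_5$-orbits, choose representatives $c_1, \ldots, c_N$ (read off from the $b_t$), and prove --- by verifying that each $g_i$, $1 \le i < 5$, carries the span of each block into itself modulo the others, exactly the verification made in Lemmas~\ref{bdss1}, \ref{bdss21}, \ref{bdss23} --- that
$$QP_5(\omega_{(1)}) = \bigoplus_{j} \langle [\Sigma_5(c_j)]_{\omega_{(1)}}\rangle$$
as $\Sigma_5$-modules, where a few orbits must be grouped together whenever the admissibility relations link them (as $u_5, u_6$ are grouped in Lemma~\ref{bdss1}). For each summand I would then solve the linear system obtained by expanding $g_i(f) + f$ in the admissible basis and imposing $g_i(f) + f \equiv_{\omega_{(1)}} 0$ for $i = 1,2,3,4$. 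Following the established pattern this gives $\langle [\Sigma_5(c_j)]_{\omega_{(1)}}\rangle^{\Sigma_5} = \langle [p(c_j)]_{\omega_{(1)}}\rangle$ for the orbits with trivial stabilizer whose symmetrization is not hit, the zero space for orbits whose symmetrization is hit, and a one- or two-dimensional space spanned by explicitly exhibited cocycles $p_\ell$ (in the manner of $p_1, p_2$ in Lemma~\ref{bdss2} and $p_4$ in Lemma~\ref{bdss24}) for the orbits with nontrivial $\Sigma_5$-stabilizer, where the plain symmetrization fails to be invariant. Assembling these, every $f$ representing a class in $QP_5(\omega_{(1)})^{\Sigma_5}$ satisfies
$$f \equiv_{\omega_{(1)}} \sum_{j} \gamma_j\, p(c_j) + \sum_{\ell} \delta_\ell\, p_\ell, \qquad \gamma_j,\ \delta_\ell \in \mathbb F_2.$$

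Finally I would impose the last generator $g_5$: compute $g_5(f) + f$ and rewrite it in terms of the admissible monomials of weight vector $\omega_{(1)}$, using Theorems~\ref{dlcb1} and~\ref{dlsig} and the strictly inadmissible monomials of degree $23$ collected in Lemmas~\ref{bdd21} and~\ref{bdd21s} (together with whatever further relations in this weight are needed). Reading off the coefficients of a well-chosen family of admissible monomials in the identity $g_5(f) + f \equiv_{\omega_{(1)}} 0$ then forces every $\gamma_j$ and $\delta_\ell$ to vanish, so $QP_5(\omega_{(1)})^{GL_5} = 0$. Together with Propositions~\ref{mdbs21} and~\ref{mdbs22} and the decomposition of $\mathrm{Ker}(\widetilde{Sq}^0_*)_{(5,9)}$, this yields $(QP_5)_{23}^{GL_5} = \mathrm{Ker}(\widetilde{Sq}^0_*)_{(5,9)}^{GL_5} = 0$, which by Theorem~\ref{cthm} completes the proof of Theorem~\ref{pthm2}(ii).

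I expect the decisive difficulty to be the bookkeeping forced by the size of this $925$-dimensional block. Two points stand out: determining the correct grouping of $\Sigma_5$-orbits so that the decomposition above is genuinely a \emph{direct} sum, and treating the orbits with nontrivial stabilizer, for which the naive symmetrization $p(c_j)$ is not $\Sigma_5$-invariant and the correct invariant representative $p_\ell$ has to be extracted by solving the $g_1, \ldots, g_4$ system by hand. The concluding $g_5$-step is conceptually routine but computationally heavy, since it requires a complete inventory of the hit relations with weight vector $\omega_{(1)}$ in order to bring $g_5(f) + f$ into admissible form.
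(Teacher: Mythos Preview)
Your plan is correct and matches the paper's approach exactly: decompose $QP_5(\omega_{(1)})$ into $\Sigma_5$-submodules, compute the $\Sigma_5$-invariants on each piece, then kill the remaining parameters with $g_5$. For reference, the paper carries this out with $17$ orbit representatives $c_1,\ldots,c_{17}$ grouped into seven $\Sigma_5$-summands ($c_1,\ldots,c_5$ individually, then $\langle[\Sigma_5(c_6,\ldots,c_{10})]\rangle$ and $\langle[\Sigma_5(c_{11},\ldots,c_{17})]\rangle$), finds $\langle[\Sigma_5(c_j)]\rangle^{\Sigma_5}=\langle[p(c_j)]\rangle$ for $j\le 5$, $\langle[\Sigma_5(c_{11},\ldots,c_{17})]\rangle^{\Sigma_5}=0$, and $\langle[\Sigma_5(c_6,\ldots,c_{10})]\rangle^{\Sigma_5}=\langle[p_5+p_6],[p_6+p_7]\rangle$, so the $\Sigma_5$-invariant space is $7$-dimensional and a single $g_5$-computation finishes. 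One small slip: the $b_t$ in Subsection~\ref{s73} list only the $P_5^+$ admissible monomials; the $635$ elements of $B_5^0(\omega_{(1)})$ come from $\Phi^0(B_4(23))$.
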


By using Proposition \ref{mds22}, we see that $\dim QP_5(\omega_{(1)}) = 925$. Consider the following monomials:
\begin{align*} &c_{1} =  x_1x_2^{7}x_3^{15}, \ c_{2} =  x_1^{3}x_2^{5}x_3^{15}, \ c_{3} =  x_1^{3}x_2^{7}x_3^{13}, \ c_{4} =  x_1x_2^{2}x_3^{5}x_4^{15}, \ c_{5} =  x_1x_2x_3^{2}x_4^{4}x_5^{15}, \\
& c_{6} =  x_1x_2^{2}x_3^{7}x_4^{13}, \ c_{7} =  x_1x_2^{3}x_3^{5}x_4^{14}, \ c_{8} =  x_1x_2^{3}x_3^{6}x_4^{13}, \ c_{9} =  x_1x_2^{3}x_3^{7}x_4^{12}, \\
& c_{10} =  x_1^{3}x_2^{5}x_3^{6}x_4^{9}, \ c_{11} =  x_1x_2x_3^{2}x_4^{6}x_5^{13}, \ c_{12} =  x_1x_2x_3^{2}x_4^{7}x_5^{12}, \ c_{13} =  x_1x_2x_3^{3}x_4^{6}x_5^{12}, \\ 
&c_{14} =  x_1x_2^{2}x_3^{3}x_4^{4}x_5^{13}, \ c_{15} =  x_1x_2^{2}x_3^{3}x_4^{5}x_5^{12}, \ c_{16} =  x_1x_2^{2}x_3^{5}x_4^{6}x_5^{9}, \ c_{17} =  x_1x_2^{2}x_3^{5}x_4^{7}x_5^{8}.
\end{align*}

\begin{lems}\label{bdss25} We have a direct summand decomposition of the $\Sigma_5$-modules: 
$$QP_5(\omega_{(1)}) = \bigoplus_{j=1}^5 \langle [\Sigma_5(a_j)]\rangle\bigoplus\langle [\Sigma_5(c_6, \ldots, c_{10})]\rangle\bigoplus \langle [\Sigma_5(c_{11}, \ldots, c_{17})]\rangle .$$
\end{lems}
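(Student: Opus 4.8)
The plan is to produce an explicit basis of $QP_5(\omega_{(1)})$ made of classes of admissible monomials, to split that basis into $\Sigma_5$-orbits, and to verify that the three summands on the right-hand side are exactly the spans of the corresponding unions of orbits. First I would combine Lemma \ref{mds21} (which gives $B_5^+(\omega_{(1)})$) with the description of $B_5^0(\omega_{(1)})$ obtained from Proposition \ref{mdp0} and Sum's known basis of $QP_4$ in degree $23$, so that $[B_5(\omega_{(1)})]$ is recorded as a basis of the $925$-dimensional space $QP_5(\omega_{(1)})$. Here the distinction between $\equiv$ and $\equiv_{\omega_{(1)}}$ is harmless because $\omega_{(1)}=(3,2,2,1)$ is the weight vector of the minimal spike $x_1^{15}x_2^{7}x_3$ of degree $23$ (see the remark following Theorem \ref{dlsig}), so $[B_5(\omega_{(1)})]_{\omega_{(1)}}=[B_5(\omega_{(1)})]$.

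Next I would run through the explicit list of admissible monomials of weight $\omega_{(1)}$ recorded in Subsection \ref{s73} and partition it into $\Sigma_5$-orbits, two monomials lying in one orbit precisely when they share the same multiset of exponents. This yields exactly $17$ orbits, whose representatives are the monomials $a_1,\dots,a_5$ and $c_6,\dots,c_{17}$; equivalently, every admissible monomial of weight $\omega_{(1)}$ is a permutation of one of these $17$. Since $\langle[\Sigma_5(z_1,\dots,z_m)]\rangle$ contains $[\sigma z_i]$ for every $\sigma\in\Sigma_5$ and every $i$, the five modules $\langle[\Sigma_5(a_j)]\rangle$ together with $\langle[\Sigma_5(c_6,\dots,c_{10})]\rangle$ and $\langle[\Sigma_5(c_{11},\dots,c_{17})]\rangle$ jointly contain all $925$ basis classes, hence span $QP_5(\omega_{(1)})$.

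For directness, note that the orbit-unions $\bigcup_{j=1}^{5}\Sigma_5(a_j)$, $\bigcup_{j=6}^{10}\Sigma_5(c_j)$ and $\bigcup_{j=11}^{17}\Sigma_5(c_j)$ are pairwise disjoint subsets of $B_5(\omega_{(1)})$, so it is enough to prove that each summand $\langle[\Sigma_5(\cdot)]\rangle$ is already contained in the span of the classes of the admissible monomials in its own orbit-union. Concretely, for each generator $z\in\{a_1,\dots,a_5,c_6,\dots,c_{17}\}$ and each $\sigma\in\Sigma_5$ with $\sigma z$ inadmissible, one reduces $[\sigma z]_{\omega_{(1)}}$ modulo $\mathcal{A}^{+}P_5$ and $P_5^{-}(\omega_{(1)})$ to a sum of admissible monomials — using the Cartan-formula straightening together with Theorems \ref{dlcb1}, \ref{dlsig} and the strict-inadmissibility Lemmas \ref{bdd21}, \ref{bdd21s} — and checks that every admissible monomial appearing has the same exponent multiset as a generator in the \emph{same} block. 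Once this closure is verified, the five modules $\langle[\Sigma_5(a_j)]\rangle$ and the two modules $\langle[\Sigma_5(c_6,\dots,c_{10})]\rangle$, $\langle[\Sigma_5(c_{11},\dots,c_{17})]\rangle$ are direct summands whose dimensions add up to $925$, which gives the asserted decomposition.

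The main obstacle is the bookkeeping in this closure step: $B_5(\omega_{(1)})$ has $925$ elements, and one must track the action of all $120$ permutations on the $17$ representatives and straighten the inadmissible images. The reason $c_6,\dots,c_{10}$ (and likewise $c_{11},\dots,c_{17}$) are grouped into a single summand, rather than handled one representative at a time, is that the straightening relations entangle the classes within each of these blocks; so the real content is to confirm that each block is stable under the $\Sigma_5$-action modulo hit elements and that no straightening relation connects different blocks.
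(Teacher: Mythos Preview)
Your proposal is correct and follows essentially the same approach the paper intends: the paper offers no detailed argument for this lemma beyond ``The proofs of the above lemmas are straightforward,'' and your plan—enumerate the $925$ admissible classes, partition them into the $17$ $\Sigma_5$-orbits represented by $c_1,\dots,c_{17}$, and verify by straightening that each block $\langle[\Sigma_5(\cdot)]\rangle$ stays within the span of its own admissible orbit-union—is exactly the direct computation that ``straightforward'' is meant to encode. (Note the paper's $a_j$ in the displayed decomposition is a typo for $c_j$, as confirmed by the proof of Proposition~\ref{mdbs23} where $p(c_j)$ is used.)
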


\begin{lems}\label{bdss26}\

{\rm i)} $\langle [\Sigma_5(a_j)]\rangle^{\Sigma_5} = \langle [p(a_j)]\rangle,\ j =1,2,3,4,5.$

{\rm ii)} $\langle [\Sigma_5(c_{11}, \ldots, c_{17})]\rangle^{\Sigma_5} = 0$.

{\rm iii)} $\langle [\Sigma_5(c_{6}, \ldots, c_{10})]\rangle^{\Sigma_5} = \langle [p_5+p_6], [p_6+p_7]\rangle$, where the polynomials $p_5, p_6, p_7$ are determined as in Subsection \ref{s74}.
\end{lems}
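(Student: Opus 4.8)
The plan is to follow exactly the template already used for the analogous computations in Lemmas \ref{bdss2}, \ref{bdss22} and \ref{bdss24}: for each direct summand appearing in the decomposition of Lemma \ref{bdss25}, write a general invariant as a linear combination of the basis monomials of that summand, impose the conditions $g_i(f)\equiv_{\omega_{(1)}} f$ for $i=1,2,3,4$ (the permutation generators $g_1,\dots,g_4$ generate $\Sigma_5$), and solve the resulting linear system over $\mathbb F_2$. Because the weight vector $\omega_{(1)}=(3,2,2,1)$ is the weight vector of the minimal spike of degree $23$, Theorem \ref{dlsig} gives $[B]_{\omega_{(1)}}=[B]$, so all these computations can be carried out directly in $QP_5$ using the admissible basis from Proposition \ref{mds21} (equivalently the list in Subsection \ref{s73}).

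For part (i), each $\langle[\Sigma_5(a_j)]\rangle$ has a basis indexed by the $\Sigma_5$-orbit of $a_j$ intersected with the admissible monomials; I would compute $g_i(f)+f$ for $i=1,\dots,4$, read off the coefficient relations, and check that the only surviving solution is the all-ones vector, i.e. the invariant $[p(a_j)]=\sum_{y\in B_5(\omega_{(1)})\cap\Sigma_5(a_j)}[y]$. Since $a_1,\dots,a_5$ also generated summands of $QP_5(\omega_{(2)})$ in Lemma \ref{bdss24} and of $QP_5(\omega_{(1)})$, one expects the same orbit-sum to be invariant here; the work is just recording the (larger) orbits in degree $23$. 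For part (ii), the module $\langle[\Sigma_5(c_{11},\dots,c_{17})]\rangle$ is spanned by the admissible monomials in the union of the seven orbits; I would again impose the four generator relations and show the homogeneous system has only the trivial solution, so the $\Sigma_5$-invariant subspace is $0$. Part (iii) is the genuinely substantial case: after imposing $g_1,g_2,g_3,g_4$ on a general combination of the admissible monomials in the orbits of $c_6,\dots,c_{10}$, the solution space turns out to be $2$-dimensional, and one exhibits explicit representatives $p_5+p_6$ and $p_6+p_7$ (with $p_5,p_6,p_7$ the orbit-type sums fixed in Subsection \ref{s74}) spanning it. Throughout, one must work modulo $P_5^-(\omega_{(1)})$, replacing any monomial of smaller weight vector that appears after applying a $g_i$ by its admissible-basis expansion; Theorems \ref{dlcb1}, \ref{dlsig} and the inadmissibility Lemmas \ref{bdd21}--\ref{bdd21s} handle these reductions.

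The main obstacle is purely the size of the bookkeeping in part (iii): the orbits of $c_6,\dots,c_{10}$ contribute many admissible monomials (the subspace $QP_5(\omega_{(1)})$ has dimension $925$), and computing each $g_i(f)+f$ in terms of the admissible basis — including correctly rewriting the non-admissible monomials that appear via $g_4$, which mixes variables $x_1,x_2$ — is lengthy and error-prone. The algebraic content is elementary linear algebra over $\mathbb F_2$; the difficulty is organizing the orbit data and the reduction relations so that the four systems close up cleanly and the claimed generators $[p(a_j)]$, $0$, and $\langle[p_5+p_6],[p_6+p_7]\rangle$ emerge. Once part (iii) is settled, Proposition \ref{mdbs23} follows by the same final step as Propositions \ref{mdbs21} and \ref{mdbs22}: write $[f]\in QP_5(\omega_{(1)})^{GL_5}$ as a combination of the $\Sigma_5$-invariants produced here, compute $g_5(f)+f$ in the admissible basis, and conclude that every coefficient vanishes.
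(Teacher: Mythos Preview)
Your proposal is essentially the paper's own approach: the paper does not give a detailed proof of this lemma at all, stating only that ``the proofs of the above lemmas are straightforward,'' and the method you outline is exactly the template used in Lemmas \ref{bdss2}, \ref{bdss22} and \ref{bdss24}.

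Two small slips worth fixing before you write it out. First, in your discussion of part (i) you refer to ``$a_1,\dots,a_5$ also generated summands of $QP_5(\omega_{(2)})$'': note that the paper's notation here is inconsistent (the monomials listed just before Lemma \ref{bdss25} are $c_1,\dots,c_{17}$, so the ``$a_j$'' in Lemmas \ref{bdss25}--\ref{bdss26} are really the $c_j$, not the $a_j$ of Lemma \ref{bdss23}); your parenthetical reasoning about orbits carrying over from $\omega_{(2)}$ is therefore off. Second, you say $g_4$ ``mixes variables $x_1,x_2$'': in the paper's conventions $g_4$ is the transposition of $x_4$ and $x_5$, and all of $g_1,\dots,g_4$ are adjacent transpositions that send monomials to monomials (preserving the weight vector). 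The non-admissible monomials you must reduce arise because a permutation of an admissible monomial need not be admissible, not because any $g_i$ with $i<5$ produces sums. The genuinely ``mixing'' generator is $g_5$, which only enters in the subsequent proof of Proposition \ref{mdbs23}.
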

The proofs of the above lemmas are straightforward.

\begin{proof}[Proof of Proposition \ref{mdbs23}] Let $f \in P_5(\omega_{(1)})$ such that $[f] \in QP_5(\omega_{(1)})^{GL_5}$. Since $[f] \in QP_5(\omega_{(2)})^{\Sigma_5}$, using Lemmas \ref{bdss25},  and \ref{bdss26}, we have 
$$f \equiv \sum_{j=1}^5\gamma_jp(c_j) +  \gamma_6(p_5+p_6) +  \gamma_7(p_6+p_7),$$ 
with $\gamma_j \in \mathbb F_2$, $1\leqslant j \leqslant 7$. By computing $g_5(f)+f$ in terms of the admissible monomials and using Theorem \ref{dlsig}, we obtain
\begin{align*}g_5(f)+f &\equiv \gamma_{1}x_2x_3^{7}x_4^{15} + \gamma_{2}x_2^{3}x_3^{15}x_4^{5} + \gamma_{3}x_1^{3}x_2^{7}x_3^{13} + \gamma_{4}x_2x_3x_4^{6}x_5^{15}\\
&\quad + \gamma_{5}x_1x_2^{15}x_3x_4^{2}x_5^{4} + \gamma_{6}x_2x_3x_4^{7}x_5^{14} + \gamma_{7}x_2x_3^{3}x_4^{5}x_5^{14}   + \text{ other terms} \equiv 0.
\end{align*}
This relation implies $\gamma_j = 0$ with $j = 1,2,\ldots , 7$.  The proposition follows.
\end{proof}

\section{Appendix} \label{s7}

In the appendix, we list all admissible monomials of degrees 9, 10, 23 in $P_4$ and $P_5$. We order a set $B$ of some monomials of degree $n$ in $P_k$ by using the order as in Definition \ref{defn3}.

\medskip\noindent
\subsection{The admissible monomials of degree 9 in $P_5$.}\ \label{s71}

\subsubsection{The admissible monomials of degree 9 in $P_4$.}\

\medskip
$B_4(9)$ is the set of 46 monomials $a_t = a_{9,t}, \ 1 \leqslant t \leqslant 46$:

\medskip
 \centerline{\begin{tabular}{lllll}
$1.\ \   x_2x_3x_4^{7}$& $2.\ \   x_2x_3^{3}x_4^{5}$& $3.\ \   x_2x_3^{7}x_4$& $4.\ \   x_2^{3}x_3x_4^{5}$& $5.\ \   x_2^{3}x_3^{5}x_4$\cr $6.\ \   x_2^{7}x_3x_4$& $7.\ \   x_1x_3x_4^{7}$& $8.\ \   x_1x_3^{3}x_4^{5}$& $9.\ \   x_1x_3^{7}x_4$& $10.\ \   x_1x_2x_4^{7}$\cr $11.\ \   x_1x_2x_3x_4^{6}$& $12.\ \   x_1x_2x_3^{2}x_4^{5}$& $13.\ \   x_1x_2x_3^{3}x_4^{4}$& $14.\ \   x_1x_2x_3^{6}x_4$& $15.\ \   x_1x_2x_3^{7}$\cr $16.\ \   x_1x_2^{2}x_3x_4^{5}$& $17.\ \   x_1x_2^{2}x_3^{5}x_4$& $18.\ \   x_1x_2^{3}x_4^{5}$& $19.\ \   x_1x_2^{3}x_3x_4^{4}$& $20.\ \   x_1x_2^{3}x_3^{4}x_4$\cr $21.\ \   x_1x_2^{3}x_3^{5}$& $22.\ \   x_1x_2^{6}x_3x_4$& $23.\ \   x_1x_2^{7}x_4$& $24.\ \   x_1x_2^{7}x_3$& $25.\ \   x_1^{3}x_3x_4^{5}$\cr $26.\ \   x_1^{3}x_3^{5}x_4$& $27.\ \   x_1^{3}x_2x_4^{5}$& $28.\ \   x_1^{3}x_2x_3x_4^{4}$& $29.\ \   x_1^{3}x_2x_3^{4}x_4$& $30.\ \   x_1^{3}x_2x_3^{5}$\cr $31.\ \   x_1^{3}x_2^{4}x_3x_4$& $32.\ \   x_1^{3}x_2^{5}x_4$& $33.\ \   x_1^{3}x_2^{5}x_3$& $34.\ \   x_1^{7}x_3x_4$& $35.\ \   x_1^{7}x_2x_4$\cr $36.\ \   x_1^{7}x_2x_3$& $37.\ \   x_2^{3}x_3^{3}x_4^{3}$& $38.\ \   x_1x_2^{2}x_3^{3}x_4^{3}$& $39.\ \   x_1x_2^{3}x_3^{2}x_4^{3}$& $40.\ \   x_1x_2^{3}x_3^{3}x_4^{2}$\cr $41.\ \   x_1^{3}x_3^{3}x_4^{3}$& $42.\ \   x_1^{3}x_2x_3^{2}x_4^{3}$& $43.\ \   x_1^{3}x_2x_3^{3}x_4^{2}$& $44.\ \   x_1^{3}x_2^{3}x_4^{3}$& $45.\ \   x_1^{3}x_2^{3}x_3x_4^{2}$\cr $46.\ \   x_1^{3}x_2^{3}x_3^{3}$.& 
\end{tabular}}

\medskip
\subsubsection{The admissible monomials of degree 9 in ${P_5}$.}\

\medskip 
$B_5(9) = B_5^0(9) \cup B_5^+(9)$, where $B_5^0(9) = \Phi^0(B_4(9))$, $|B_5^0(9)| =160$ and $B_5^+(9)= B_5^+(3,1,1)\cup B_5^+(3,3)\cup B_5(5,2)$, where.

\medskip
{$B_5^+(3,1,1)$ is the set of 6 monomials $b_t = b_{9,t}, 1 \leqslant t \leqslant 6$}:

\medskip
 \centerline{\begin{tabular}{llll}
$1.\ \   x_1x_2x_3x_4^{2}x_5^{4}$& $2.\ \   x_1x_2x_3^{2}x_4x_5^{4}$& $3.\ \   x_1x_2x_3^{2}x_4^{4}x_5$& $4.\ \   x_1x_2^{2}x_3x_4x_5^{4}$\cr 
$5.\ \   x_1x_2^{2}x_3x_4^{4}x_5$&$6.\ \   x_1x_2^{2}x_3^{4}x_4x_5$.&
\end{tabular}}

\medskip
{$B_5^+(3,3)$ is the set of 15 monomials $b_t = b_{9,t}, 7 \leqslant t \leqslant 21$}:

\medskip
 \centerline{\begin{tabular}{llll}
  $7.\ \   x_1x_2x_3^{2}x_4^{2}x_5^{3}$& $8.\ \   x_1x_2x_3^{2}x_4^{3}x_5^{2}$& $9.\ \   x_1x_2x_3^{3}x_4^{2}x_5^{2}$& $10.\ \   x_1x_2^{2}x_3x_4^{2}x_5^{3}$\cr 
$11.\ \   x_1x_2^{2}x_3x_4^{3}x_5^{2}$& $12.\ \   x_1x_2^{2}x_3^{2}x_4x_5^{3}$& $13.\ \   x_1x_2^{2}x_3^{2}x_4^{3}x_5$& $14.\ \   x_1x_2^{2}x_3^{3}x_4x_5^{2}$\cr 
$15.\ \   x_1x_2^{2}x_3^{3}x_4^{2}x_5$& $16.\ \   x_1x_2^{3}x_3x_4^{2}x_5^{2}$& $17.\ \   x_1x_2^{3}x_3^{2}x_4x_5^{2}$& $18.\ \   x_1x_2^{3}x_3^{2}x_4^{2}x_5$\cr 
$19.\ \   x_1^{3}x_2x_3x_4^{2}x_5^{2}$& $20.\ \   x_1^{3}x_2x_3^{2}x_4x_5^{2}$& $21.\ \   x_1^{3}x_2x_3^{2}x_4^{2}x_5$.& 
\end{tabular}}

\medskip
{$B_5(5,2)$ is the set of 10 monomials $b_t = b_{9,t}, 22 \leqslant t \leqslant 31$}:

\medskip
 \centerline{\begin{tabular}{llll}
$22.\ \   x_1x_2x_3x_4^{3}x_5^{3}$& $23.\ \   x_1x_2x_3^{3}x_4x_5^{3}$& $24.\ \   x_1x_2x_3^{3}x_4^{3}x_5$& $25.\ \   x_1x_2^{3}x_3x_4x_5^{3}$\cr 
$26.\ \   x_1x_2^{3}x_3x_4^{3}x_5$& $27.\ \   x_1x_2^{3}x_3^{3}x_4x_5$& $28.\ \   x_1^{3}x_2x_3x_4x_5^{3}$& $29.\ \   x_1^{3}x_2x_3x_4^{3}x_5$\cr 
$30.\ \   x_1^{3}x_2x_3^{3}x_4x_5$& $31.\ \   x_1^{3}x_2^{3}x_3x_4x_5$.& 
\end{tabular}}

\medskip
\subsection{The admissible monomials of degree 10 in $ {P_5}$.}\ \label{s72}

\medskip 
\subsubsection{The admissible monomials of degree 10 in $ {P_4}$.}\ 

\medskip
$B_4(10)$ is the set of 70 monomials.

\medskip
 \centerline{\begin{tabular}{lllll}
1.\  $x_3^{3}x_4^{7} $& 2.\  $x_3^{7}x_4^{3} $& 3.\  $x_2x_3^{2}x_4^{7} $& 4.\  $x_2x_3^{3}x_4^{6} $& 5.\  $x_2x_3^{6}x_4^{3}$\cr 
6.\  $x_2x_3^{7}x_4^{2} $& 7.\  $x_2^{3}x_4^{7} $& 8.\  $x_2^{3}x_3x_4^{6} $& 9.\  $x_2^{3}x_3^{3}x_4^{4} $& 10.\  $x_2^{3}x_3^{5}x_4^{2}$\cr 
11.\  $x_2^{3}x_3^{7} $& 12.\  $x_2^{7}x_4^{3} $& 13.\  $x_2^{7}x_3x_4^{2} $& 14.\  $x_2^{7}x_3^{3} $& 15.\  $x_1x_3^{2}x_4^{7}$\cr 
16.\  $x_1x_3^{3}x_4^{6} $& 17.\  $x_1x_3^{6}x_4^{3} $& 18.\  $x_1x_3^{7}x_4^{2} $& 19.\  $x_1x_2x_3^{2}x_4^{6} $& 20.\  $x_1x_2x_3^{6}x_4^{2}$\cr 
21.\  $x_1x_2^{2}x_4^{7} $& 22.\  $x_1x_2^{2}x_3x_4^{6} $& 23.\  $x_1x_2^{2}x_3^{3}x_4^{4} $& 24.\  $x_1x_2^{2}x_3^{4}x_4^{3} $& 25.\  $x_1x_2^{2}x_3^{5}x_4^{2}$\cr 
26.\  $x_1x_2^{2}x_3^{7} $& 27.\  $x_1x_2^{3}x_4^{6} $& 28.\  $x_1x_2^{3}x_3^{2}x_4^{4} $& 29.\  $x_1x_2^{3}x_3^{4}x_4^{2} $& 30.\  $x_1x_2^{3}x_3^{6}$\cr 
31.\  $x_1x_2^{6}x_4^{3} $& 32.\  $x_1x_2^{6}x_3x_4^{2} $& 33.\  $x_1x_2^{6}x_3^{3} $& 34.\  $x_1x_2^{7}x_4^{2} $& 35.\  $x_1x_2^{7}x_3^{2}$\cr 
36.\  $x_1^{3}x_4^{7} $& 37.\  $x_1^{3}x_3x_4^{6} $& 38.\  $x_1^{3}x_3^{3}x_4^{4} $& 39.\  $x_1^{3}x_3^{5}x_4^{2} $& 40.\  $x_1^{3}x_3^{7}$\cr 
41.\  $x_1^{3}x_2x_4^{6} $& 42.\  $x_1^{3}x_2x_3^{2}x_4^{4} $& 43.\  $x_1^{3}x_2x_3^{4}x_4^{2} $& 44.\  $x_1^{3}x_2x_3^{6} $& 45.\  $x_1^{3}x_2^{3}x_4^{4}$\cr 
46.\  $x_1^{3}x_2^{3}x_3^{4} $& 47.\  $x_1^{3}x_2^{4}x_3x_4^{2} $& 48.\  $x_1^{3}x_2^{5}x_4^{2} $& 49.\  $x_1^{3}x_2^{5}x_3^{2} $& 50.\  $x_1^{3}x_2^{7}$\cr 
51.\  $x_1^{7}x_4^{3} $& 52.\  $x_1^{7}x_3x_4^{2} $& 53.\  $x_1^{7}x_3^{3} $& 54.\  $x_1^{7}x_2x_4^{2} $& 55.\  $x_1^{7}x_2x_3^{2}$\cr 
56.\  $x_1^{7}x_2^{3} $& 57.\  $x_1x_2x_3x_4^{7} $& 58.\  $x_1x_2x_3^{3}x_4^{5} $& 59.\  $x_1x_2x_3^{7}x_4 $& 60.\  $x_1x_2^{3}x_3x_4^{5}$\cr 
61.\  $x_1x_2^{3}x_3^{5}x_4 $& 62.\  $x_1x_2^{7}x_3x_4 $& 63.\  $x_1^{3}x_2x_3x_4^{5} $& 64.\  $x_1^{3}x_2x_3^{5}x_4 $& 65.\  $x_1^{3}x_2^{5}x_3x_4$\cr 
66.\  $x_1^{7}x_2x_3x_4 $& 67.\  $x_1x_2^{3}x_3^{3}x_4^{3} $& 68.\  $x_1^{3}x_2x_3^{3}x_4^{3} $& 69.\  $x_1^{3}x_2^{3}x_3x_4^{3} $& 70.\  $x_1^{3}x_2^{3}x_3^{3}x_4$\cr
\end{tabular}}

\medskip
\subsubsection{The admissible monomials of degree 10 in ${P_5}$.}\

\medskip 
$B_5(10) = B_5^0(10) \cup B_5^+(10)$, where $B_5^0(10) = \Phi^0(B_4(10))$, $|B_5^0(10)| =230$ and 
$$B_5^+(10) = B_5^+(2,2,1)\cup B_5^+(2,4)\cup B_5(4,1,1)\cup B_5(4,3).$$

\medskip
$B_5^+(2,2,1)$ is the set of 5 monomials:
$$ x_1x_2x_3^{2}x_4^{2}x_5^{4}\quad    x_1x_2x_3^{2}x_4^{4}x_5^{2} \quad    x_1x_2^{2}x_3x_4^{2}x_5^{4} \quad x_1x_2^{2}x_3x_4^{4}x_5^{2} \quad  x_1x_2^{2}x_3^{4}x_4x_5^{2}.$$

\medskip
$B_5^+(2,4)$ is the set of 5 monomials:
$$x_1x_2^{2}x_3^{2}x_4^{2}x_5^{3}\quad x_1x_2^{2}x_3^{2}x_4^{3}x_5^{2} \quad x_1x_2^{2}x_3^{3}x_4^{2}x_5^{2} \quad x_1x_2^{3}x_3^{2}x_4^{2}x_5^{2} \quad x_1^{3}x_2x_3^{2}x_4^{2}x_5^{2}. $$

\bigskip
$B_5^+(4,1,1)$ is the set of 20 monomials:

\bigskip
 \centerline{\begin{tabular}{llll}
1.\  $x_1x_2x_3x_4x_5^{6} $& 2.\  $x_1x_2x_3x_4^{2}x_5^{5} $& 3.\  $x_1x_2x_3x_4^{3}x_5^{4} $& 4.\  $x_1x_2x_3x_4^{6}x_5$\cr 
5.\  $x_1x_2x_3^{2}x_4x_5^{5} $& 6.\  $x_1x_2x_3^{2}x_4^{5}x_5 $& 7.\  $x_1x_2x_3^{3}x_4x_5^{4} $& 8.\  $x_1x_2x_3^{3}x_4^{4}x_5$\cr 
9.\  $x_1x_2x_3^{6}x_4x_5 $& 10.\  $x_1x_2^{2}x_3x_4x_5^{5} $& 11.\  $x_1x_2^{2}x_3x_4^{5}x_5 $& 12.\  $x_1x_2^{2}x_3^{5}x_4x_5$\cr 
13.\  $x_1x_2^{3}x_3x_4x_5^{4} $& 14.\  $x_1x_2^{3}x_3x_4^{4}x_5 $& 15.\  $x_1x_2^{3}x_3^{4}x_4x_5 $& 16.\  $x_1x_2^{6}x_3x_4x_5$\cr 
17.\  $x_1^{3}x_2x_3x_4x_5^{4} $& 18.\  $x_1^{3}x_2x_3x_4^{4}x_5 $& 19.\  $x_1^{3}x_2x_3^{4}x_4x_5 $& 20.\  $x_1^{3}x_2^{4}x_3x_4x_5$\cr 
\end{tabular}}

\medskip
$B_5^+(4,3)$ is the set of 20 monomials:

\medskip
 \centerline{\begin{tabular}{llll}
1.\  $x_1x_2x_3^{2}x_4^{3}x_5^{3} $& 2.\  $x_1x_2x_3^{3}x_4^{2}x_5^{3} $& 3.\  $x_1x_2x_3^{3}x_4^{3}x_5^{2} $& 4.\  $x_1x_2^{2}x_3x_4^{3}x_5^{3}$\cr 
5.\  $x_1x_2^{2}x_3^{3}x_4x_5^{3} $& 6.\  $x_1x_2^{2}x_3^{3}x_4^{3}x_5 $& 7.\  $x_1x_2^{3}x_3x_4^{2}x_5^{3} $& 8.\  $x_1x_2^{3}x_3x_4^{3}x_5^{2}$\cr 
9.\  $x_1x_2^{3}x_3^{2}x_4x_5^{3} $& 10.\  $x_1x_2^{3}x_3^{2}x_4^{3}x_5 $& 11.\  $x_1x_2^{3}x_3^{3}x_4x_5^{2} $& 12.\  $x_1x_2^{3}x_3^{3}x_4^{2}x_5$\cr 
13.\  $x_1^{3}x_2x_3x_4^{2}x_5^{3} $& 14.\  $x_1^{3}x_2x_3x_4^{3}x_5^{2} $& 15.\  $x_1^{3}x_2x_3^{2}x_4x_5^{3} $& 16.\  $x_1^{3}x_2x_3^{2}x_4^{3}x_5$\cr 
17.\  $x_1^{3}x_2x_3^{3}x_4x_5^{2} $& 18.\  $x_1^{3}x_2x_3^{3}x_4^{2}x_5 $& 19.\  $x_1^{3}x_2^{3}x_3x_4x_5^{2} $& 20.\  $x_1^{3}x_2^{3}x_3x_4^{2}x_5$\cr 
\end{tabular}}

\medskip
\subsection{The admissible monomials of degree 23 in ${P_5}$.}\ \label{s73}

\medskip
\subsubsection{The admissible monomials of degree 23 in ${P_4}$.}\ 

\medskip 
$B_4(23)$ is the set of 155 monomials $a_t = a_{23,t}, \ 1 \leqslant t \leqslant 155$:

\medskip
 \centerline{\begin{tabular}{llll}
$1.\ \   x_1x_2x_3^{7}x_4^{14}$& $2.\ \   x_1x_2x_3^{14}x_4^{7}$& $3.\ \   x_1x_2^{2}x_3^{7}x_4^{13}$& $4.\ \   x_1x_2^{2}x_3^{13}x_4^{7}$\cr 
$5.\ \   x_1x_2^{3}x_3^{5}x_4^{14}$& $6.\ \   x_1x_2^{3}x_3^{6}x_4^{13}$& $7.\ \   x_1x_2^{3}x_3^{7}x_4^{12}$& $8.\ \   x_1x_2^{3}x_3^{12}x_4^{7}$\cr 
$9.\ \   x_1x_2^{3}x_3^{13}x_4^{6}$& $10.\ \   x_1x_2^{3}x_3^{14}x_4^{5}$& $11.\ \   x_1x_2^{6}x_3^{3}x_4^{13}$& $12.\ \   x_1x_2^{6}x_3^{7}x_4^{9}$\cr 
$13.\ \   x_1x_2^{6}x_3^{11}x_4^{5}$& $14.\ \   x_1x_2^{7}x_3x_4^{14}$& $15.\ \   x_1x_2^{7}x_3^{2}x_4^{13}$& $16.\ \   x_1x_2^{7}x_3^{3}x_4^{12}$\cr 
$17.\ \   x_1x_2^{7}x_3^{6}x_4^{9}$& $18.\ \   x_1x_2^{7}x_3^{7}x_4^{8}$& $19.\ \   x_1x_2^{7}x_3^{10}x_4^{5}$& $20.\ \   x_1x_2^{7}x_3^{11}x_4^{4}$\cr 
$21.\ \   x_1x_2^{7}x_3^{14}x_4$& $22.\ \   x_1x_2^{14}x_3x_4^{7}$& $23.\ \   x_1x_2^{14}x_3^{3}x_4^{5}$& $24.\ \   x_1x_2^{14}x_3^{7}x_4$\cr 
$25.\ \   x_1^{3}x_2x_3^{5}x_4^{14}$& $26.\ \   x_1^{3}x_2x_3^{6}x_4^{13}$& $27.\ \   x_1^{3}x_2x_3^{7}x_4^{12}$& $28.\ \   x_1^{3}x_2x_3^{12}x_4^{7}$\cr 
$29.\ \   x_1^{3}x_2x_3^{13}x_4^{6}$& $30.\ \   x_1^{3}x_2x_3^{14}x_4^{5}$& $31.\ \   x_1^{3}x_2^{3}x_3^{4}x_4^{13}$& $32.\ \   x_1^{3}x_2^{3}x_3^{5}x_4^{12}$\cr 
$33.\ \   x_1^{3}x_2^{3}x_3^{12}x_4^{5}$& $34.\ \   x_1^{3}x_2^{3}x_3^{13}x_4^{4}$& $35.\ \   x_1^{3}x_2^{4}x_3^{3}x_4^{13}$& $36.\ \   x_1^{3}x_2^{4}x_3^{7}x_4^{9}$\cr 
$37.\ \   x_1^{3}x_2^{4}x_3^{11}x_4^{5}$& $38.\ \   x_1^{3}x_2^{5}x_3x_4^{14}$& $39.\ \   x_1^{3}x_2^{5}x_3^{2}x_4^{13}$& $40.\ \   x_1^{3}x_2^{5}x_3^{3}x_4^{12}$\cr 
$41.\ \   x_1^{3}x_2^{5}x_3^{6}x_4^{9}$& $42.\ \   x_1^{3}x_2^{5}x_3^{7}x_4^{8}$& $43.\ \   x_1^{3}x_2^{5}x_3^{10}x_4^{5}$& $44.\ \   x_1^{3}x_2^{5}x_3^{11}x_4^{4}$\cr 
$45.\ \   x_1^{3}x_2^{5}x_3^{14}x_4$& $46.\ \   x_1^{3}x_2^{7}x_3x_4^{12}$& $47.\ \   x_1^{3}x_2^{7}x_3^{4}x_4^{9}$& $48.\ \   x_1^{3}x_2^{7}x_3^{5}x_4^{8}$\cr 
$49.\ \   x_1^{3}x_2^{7}x_3^{8}x_4^{5}$& $50.\ \   x_1^{3}x_2^{7}x_3^{9}x_4^{4}$& $51.\ \   x_1^{3}x_2^{7}x_3^{12}x_4$& $52.\ \   x_1^{3}x_2^{13}x_3x_4^{6}$\cr 
$53.\ \   x_1^{3}x_2^{13}x_3^{2}x_4^{5}$& $54.\ \   x_1^{3}x_2^{13}x_3^{3}x_4^{4}$& $55.\ \   x_1^{3}x_2^{13}x_3^{6}x_4$& $56.\ \   x_1^{7}x_2x_3x_4^{14}$\cr 
$57.\ \   x_1^{7}x_2x_3^{2}x_4^{13}$& $58.\ \   x_1^{7}x_2x_3^{3}x_4^{12}$& $59.\ \   x_1^{7}x_2x_3^{6}x_4^{9}$& $60.\ \   x_1^{7}x_2x_3^{7}x_4^{8}$\cr 
$61.\ \   x_1^{7}x_2x_3^{10}x_4^{5}$& $62.\ \   x_1^{7}x_2x_3^{11}x_4^{4}$& $63.\ \   x_1^{7}x_2x_3^{14}x_4$& $64.\ \   x_1^{7}x_2^{3}x_3x_4^{12}$\cr 
$65.\ \   x_1^{7}x_2^{3}x_3^{4}x_4^{9}$& $66.\ \   x_1^{7}x_2^{3}x_3^{5}x_4^{8}$& $67.\ \   x_1^{7}x_2^{3}x_3^{8}x_4^{5}$& $68.\ \   x_1^{7}x_2^{3}x_3^{9}x_4^{4}$\cr 
$69.\ \   x_1^{7}x_2^{3}x_3^{12}x_4$& $70.\ \   x_1^{7}x_2^{7}x_3x_4^{8}$& $71.\ \   x_1^{7}x_2^{7}x_3^{8}x_4$& $72.\ \   x_1^{7}x_2^{9}x_3^{2}x_4^{5}$\cr 
$73.\ \   x_1^{7}x_2^{9}x_3^{3}x_4^{4}$& $74.\ \   x_1^{7}x_2^{11}x_3x_4^{4}$& $75.\ \   x_1^{7}x_2^{11}x_3^{4}x_4$& $76.\ \   x_2x_3^{7}x_4^{15}$\cr 
$77.\ \   x_2x_3^{15}x_4^{7}$& $78.\ \   x_2^{3}x_3^{5}x_4^{15}$& $79.\ \   x_2^{3}x_3^{7}x_4^{13}$& $80.\ \   x_2^{3}x_3^{13}x_4^{7}$\cr 
$81.\ \   x_2^{3}x_3^{15}x_4^{5}$& $82.\ \   x_2^{7}x_3x_4^{15}$& $83.\ \   x_2^{7}x_3^{3}x_4^{13}$& $84.\ \   x_2^{7}x_3^{7}x_4^{9}$\cr 
$85.\ \   x_2^{7}x_3^{11}x_4^{5}$& $86.\ \   x_2^{7}x_3^{15}x_4$& $87.\ \   x_2^{15}x_3x_4^{7}$& $88.\ \   x_2^{15}x_3^{3}x_4^{5}$\cr 
$89.\ \   x_2^{15}x_3^{7}x_4$& $90.\ \   x_1x_3^{7}x_4^{15}$& $91.\ \   x_1x_3^{15}x_4^{7}$& $92.\ \   x_1x_2x_3^{6}x_4^{15}$\cr 
$93.\ \   x_1x_2x_3^{15}x_4^{6}$& $94.\ \   x_1x_2^{2}x_3^{5}x_4^{15}$& $95.\ \   x_1x_2^{2}x_3^{15}x_4^{5}$& $96.\ \   x_1x_2^{3}x_3^{4}x_4^{15}$\cr 
$97.\ \   x_1x_2^{3}x_3^{15}x_4^{4}$& $98.\ \   x_1x_2^{6}x_3x_4^{15}$& $99.\ \   x_1x_2^{6}x_3^{15}x_4$& $100.\ \   x_1x_2^{7}x_4^{15}$\cr 
$101.\ \   x_1x_2^{7}x_3^{15}$& $102.\ \   x_1x_2^{15}x_4^{7}$& $103.\ \   x_1x_2^{15}x_3x_4^{6}$& $104.\ \   x_1x_2^{15}x_3^{2}x_4^{5}$\cr 
$105.\ \   x_1x_2^{15}x_3^{3}x_4^{4}$& $106.\ \   x_1x_2^{15}x_3^{6}x_4$& $107.\ \   x_1x_2^{15}x_3^{7}$& $108.\ \   x_1^{3}x_3^{5}x_4^{15}$\cr 
$109.\ \   x_1^{3}x_3^{7}x_4^{13}$& $110.\ \   x_1^{3}x_3^{13}x_4^{7}$& $111.\ \   x_1^{3}x_3^{15}x_4^{5}$& $112.\ \   x_1^{3}x_2x_3^{4}x_4^{15}$\cr 
$113.\ \   x_1^{3}x_2x_3^{15}x_4^{4}$& $114.\ \   x_1^{3}x_2^{4}x_3x_4^{15}$& $115.\ \   x_1^{3}x_2^{4}x_3^{15}x_4$& $116.\ \   x_1^{3}x_2^{5}x_4^{15}$\cr 
$117.\ \   x_1^{3}x_2^{5}x_3^{15}$& $118.\ \   x_1^{3}x_2^{7}x_4^{13}$& $119.\ \   x_1^{3}x_2^{7}x_3^{13}$& $120.\ \   x_1^{3}x_2^{13}x_4^{7}$\cr 
$121.\ \   x_1^{3}x_2^{13}x_3^{7}$& $122.\ \   x_1^{3}x_2^{15}x_4^{5}$& $123.\ \   x_1^{3}x_2^{15}x_3x_4^{4}$& $124.\ \   x_1^{3}x_2^{15}x_3^{4}x_4$\cr 
$125.\ \   x_1^{3}x_2^{15}x_3^{5}$& $126.\ \   x_1^{7}x_3x_4^{15}$& $127.\ \   x_1^{7}x_3^{3}x_4^{13}$& $128.\ \   x_1^{7}x_3^{7}x_4^{9}$\cr 
$129.\ \   x_1^{7}x_3^{11}x_4^{5}$& $130.\ \   x_1^{7}x_3^{15}x_4$& $131.\ \   x_1^{7}x_2x_4^{15}$& $132.\ \   x_1^{7}x_2x_3^{15}$\cr 
$133.\ \   x_1^{7}x_2^{3}x_4^{13}$& $134.\ \   x_1^{7}x_2^{3}x_3^{13}$& $135.\ \   x_1^{7}x_2^{7}x_4^{9}$& $136.\ \   x_1^{7}x_2^{7}x_3^{9}$\cr 
$137.\ \   x_1^{7}x_2^{11}x_4^{5}$& $138.\ \   x_1^{7}x_2^{11}x_3^{5}$& $139.\ \   x_1^{7}x_2^{15}x_4$& $140.\ \   x_1^{7}x_2^{15}x_3$\cr 
$141.\ \   x_1^{15}x_3x_4^{7}$& $142.\ \   x_1^{15}x_3^{3}x_4^{5}$& $143.\ \   x_1^{15}x_3^{7}x_4$& $144.\ \   x_1^{15}x_2x_4^{7}$\cr 
$145.\ \   x_1^{15}x_2x_3x_4^{6}$& $146.\ \   x_1^{15}x_2x_3^{2}x_4^{5}$& $147.\ \   x_1^{15}x_2x_3^{3}x_4^{4}$& $148.\ \   x_1^{15}x_2x_3^{6}x_4$\cr 
\end{tabular}}

 \centerline{\begin{tabular}{llll}
$149.\ \   x_1^{15}x_2x_3^{7}$& $150.\ \   x_1^{15}x_2^{3}x_4^{5}$& $151.\ \   x_1^{15}x_2^{3}x_3x_4^{4}$& $152.\ \   x_1^{15}x_2^{3}x_3^{4}x_4$\cr 
$153.\ \   x_1^{15}x_2^{3}x_3^{5}$& $154.\ \   x_1^{15}x_2^{7}x_4$& $155.\ \   x_1^{15}x_2^{7}x_3$.&
\end{tabular}}

\medskip
\subsubsection{The admissible monomials of degree 23 in ${P_5}$.}\ 

\medskip 
We have $B_5(23) = B_5^0(23) \cup \psi(B_5(9)) \cup \left(B_5^+(23)\cap \text{Ker}(\widetilde{Sq}^0_*)_{(5,9)}\right)$, where $B_5^0(23)$ $ = \Phi^0(B_4(23))$, $|B_5^0(35)| =635$, $|\psi(B_5(9))| = 191$ with $\psi: P_5 \to P_5, \ \psi(x) = X_\emptyset x^2$, and 
$$B_5^+(23)\cap \text{Ker}(\widetilde{Sq}^0_*)_{(5,9)} = B_5^+(3,2,2,1) \cup B_5^+(3,4,1,1) \cup B_5^+(3,4,3).$$ 

\medskip
$B_5^+(3,2,2,1)$ is the set of 290 monomials $b_t = b_{23,t}, \ 1 \leqslant t \leqslant 290$:

\medskip
 \centerline{\begin{tabular}{rlrlrl}
1. & $x_1x_2x_3x_4^{6}x_5^{14} $& 2. & $x_1x_2x_3x_4^{14}x_5^{6} $& 3. & $x_1x_2x_3^{2}x_4^{4}x_5^{15} $\cr  
4. & $x_1x_2x_3^{2}x_4^{5}x_5^{14} $& 5. & $x_1x_2x_3^{2}x_4^{6}x_5^{13} $& 6. & $x_1x_2x_3^{2}x_4^{7}x_5^{12} $\cr  
7. & $x_1x_2x_3^{2}x_4^{12}x_5^{7} $& 8. & $x_1x_2x_3^{2}x_4^{13}x_5^{6} $& 9. & $x_1x_2x_3^{2}x_4^{14}x_5^{5} $\cr  
10. & $x_1x_2x_3^{2}x_4^{15}x_5^{4} $& 11. & $x_1x_2x_3^{3}x_4^{4}x_5^{14} $& 12. & $x_1x_2x_3^{3}x_4^{6}x_5^{12} $\cr  
13. & $x_1x_2x_3^{3}x_4^{12}x_5^{6} $& 14. & $x_1x_2x_3^{3}x_4^{14}x_5^{4} $& 15. & $x_1x_2x_3^{6}x_4x_5^{14} $\cr  
16. & $x_1x_2x_3^{6}x_4^{2}x_5^{13} $& 17. & $x_1x_2x_3^{6}x_4^{3}x_5^{12} $& 18. & $x_1x_2x_3^{6}x_4^{6}x_5^{9} $\cr  
19. & $x_1x_2x_3^{6}x_4^{7}x_5^{8} $& 20. & $x_1x_2x_3^{6}x_4^{10}x_5^{5} $& 21. & $x_1x_2x_3^{6}x_4^{11}x_5^{4} $\cr  
22. & $x_1x_2x_3^{6}x_4^{14}x_5 $& 23. & $x_1x_2x_3^{7}x_4^{2}x_5^{12} $& 24. & $x_1x_2x_3^{7}x_4^{6}x_5^{8} $\cr  
25. & $x_1x_2x_3^{7}x_4^{10}x_5^{4} $& 26. & $x_1x_2x_3^{14}x_4x_5^{6} $& 27. & $x_1x_2x_3^{14}x_4^{2}x_5^{5} $\cr  
28. & $x_1x_2x_3^{14}x_4^{3}x_5^{4} $& 29. & $x_1x_2x_3^{14}x_4^{6}x_5 $& 30. & $x_1x_2x_3^{15}x_4^{2}x_5^{4} $\cr  
31. & $x_1x_2^{2}x_3x_4^{4}x_5^{15} $& 32. & $x_1x_2^{2}x_3x_4^{5}x_5^{14} $& 33. & $x_1x_2^{2}x_3x_4^{6}x_5^{13} $\cr  
34. & $x_1x_2^{2}x_3x_4^{7}x_5^{12} $& 35. & $x_1x_2^{2}x_3x_4^{12}x_5^{7} $& 36. & $x_1x_2^{2}x_3x_4^{13}x_5^{6} $\cr  
37. & $x_1x_2^{2}x_3x_4^{14}x_5^{5} $& 38. & $x_1x_2^{2}x_3x_4^{15}x_5^{4} $& 39. & $x_1x_2^{2}x_3^{3}x_4^{4}x_5^{13} $\cr  
40. & $x_1x_2^{2}x_3^{3}x_4^{5}x_5^{12} $& 41. & $x_1x_2^{2}x_3^{3}x_4^{12}x_5^{5} $& 42. & $x_1x_2^{2}x_3^{3}x_4^{13}x_5^{4} $\cr  
43. & $x_1x_2^{2}x_3^{4}x_4x_5^{15} $& 44. & $x_1x_2^{2}x_3^{4}x_4^{3}x_5^{13} $& 45. & $x_1x_2^{2}x_3^{4}x_4^{7}x_5^{9} $\cr  
46. & $x_1x_2^{2}x_3^{4}x_4^{9}x_5^{7} $& 47. & $x_1x_2^{2}x_3^{4}x_4^{11}x_5^{5} $& 48. & $x_1x_2^{2}x_3^{4}x_4^{15}x_5 $\cr  
49. & $x_1x_2^{2}x_3^{5}x_4x_5^{14} $& 50. & $x_1x_2^{2}x_3^{5}x_4^{2}x_5^{13} $& 51. & $x_1x_2^{2}x_3^{5}x_4^{3}x_5^{12} $\cr  
52. & $x_1x_2^{2}x_3^{5}x_4^{6}x_5^{9} $& 53. & $x_1x_2^{2}x_3^{5}x_4^{7}x_5^{8} $& 54. & $x_1x_2^{2}x_3^{5}x_4^{8}x_5^{7} $\cr  
55. & $x_1x_2^{2}x_3^{5}x_4^{9}x_5^{6} $& 56. & $x_1x_2^{2}x_3^{5}x_4^{10}x_5^{5} $& 57. & $x_1x_2^{2}x_3^{5}x_4^{11}x_5^{4} $\cr  
58. & $x_1x_2^{2}x_3^{5}x_4^{14}x_5 $& 59. & $x_1x_2^{2}x_3^{7}x_4x_5^{12} $& 60. & $x_1x_2^{2}x_3^{7}x_4^{4}x_5^{9} $\cr  
61. & $x_1x_2^{2}x_3^{7}x_4^{5}x_5^{8} $& 62. & $x_1x_2^{2}x_3^{7}x_4^{8}x_5^{5} $& 63. & $x_1x_2^{2}x_3^{7}x_4^{9}x_5^{4} $\cr  
64. & $x_1x_2^{2}x_3^{7}x_4^{12}x_5 $& 65. & $x_1x_2^{2}x_3^{12}x_4x_5^{7} $& 66. & $x_1x_2^{2}x_3^{12}x_4^{3}x_5^{5} $\cr  
67. & $x_1x_2^{2}x_3^{12}x_4^{7}x_5 $& 68. & $x_1x_2^{2}x_3^{13}x_4x_5^{6} $& 69. & $x_1x_2^{2}x_3^{13}x_4^{2}x_5^{5} $\cr  
70. & $x_1x_2^{2}x_3^{13}x_4^{3}x_5^{4} $& 71. & $x_1x_2^{2}x_3^{13}x_4^{6}x_5 $& 72. & $x_1x_2^{2}x_3^{15}x_4x_5^{4} $\cr  
73. & $x_1x_2^{2}x_3^{15}x_4^{4}x_5 $& 74. & $x_1x_2^{3}x_3x_4^{4}x_5^{14} $& 75. & $x_1x_2^{3}x_3x_4^{6}x_5^{12} $\cr  
76. & $x_1x_2^{3}x_3x_4^{12}x_5^{6} $& 77. & $x_1x_2^{3}x_3x_4^{14}x_5^{4} $& 78. & $x_1x_2^{3}x_3^{2}x_4^{4}x_5^{13} $\cr  
79. & $x_1x_2^{3}x_3^{2}x_4^{5}x_5^{12} $& 80. & $x_1x_2^{3}x_3^{2}x_4^{12}x_5^{5} $& 81. & $x_1x_2^{3}x_3^{2}x_4^{13}x_5^{4} $\cr  
82. & $x_1x_2^{3}x_3^{3}x_4^{4}x_5^{12} $& 83. & $x_1x_2^{3}x_3^{3}x_4^{12}x_5^{4} $& 84. & $x_1x_2^{3}x_3^{4}x_4x_5^{14} $\cr  
85. & $x_1x_2^{3}x_3^{4}x_4^{2}x_5^{13} $& 86. & $x_1x_2^{3}x_3^{4}x_4^{3}x_5^{12} $& 87. & $x_1x_2^{3}x_3^{4}x_4^{6}x_5^{9} $\cr  
88. & $x_1x_2^{3}x_3^{4}x_4^{7}x_5^{8} $& 89. & $x_1x_2^{3}x_3^{4}x_4^{8}x_5^{7} $& 90. & $x_1x_2^{3}x_3^{4}x_4^{9}x_5^{6} $\cr  
91. & $x_1x_2^{3}x_3^{4}x_4^{10}x_5^{5} $& 92. & $x_1x_2^{3}x_3^{4}x_4^{11}x_5^{4} $& 93. & $x_1x_2^{3}x_3^{4}x_4^{14}x_5 $\cr  
94. & $x_1x_2^{3}x_3^{5}x_4^{2}x_5^{12} $& 95. & $x_1x_2^{3}x_3^{5}x_4^{6}x_5^{8} $& 96. & $x_1x_2^{3}x_3^{5}x_4^{8}x_5^{6} $\cr  
97. & $x_1x_2^{3}x_3^{5}x_4^{10}x_5^{4} $& 98. & $x_1x_2^{3}x_3^{6}x_4x_5^{12} $& 99. & $x_1x_2^{3}x_3^{6}x_4^{4}x_5^{9} $\cr  
100. & $x_1x_2^{3}x_3^{6}x_4^{5}x_5^{8} $& 101. & $x_1x_2^{3}x_3^{6}x_4^{8}x_5^{5} $& 102. & $x_1x_2^{3}x_3^{6}x_4^{9}x_5^{4} $\cr  
103. & $x_1x_2^{3}x_3^{6}x_4^{12}x_5 $& 104. & $x_1x_2^{3}x_3^{7}x_4^{4}x_5^{8} $& 105. & $x_1x_2^{3}x_3^{7}x_4^{8}x_5^{4} $\cr  
106. & $x_1x_2^{3}x_3^{12}x_4x_5^{6} $& 107. & $x_1x_2^{3}x_3^{12}x_4^{2}x_5^{5} $& 108. & $x_1x_2^{3}x_3^{12}x_4^{3}x_5^{4} $\cr  
109. & $x_1x_2^{3}x_3^{12}x_4^{6}x_5 $& 110. & $x_1x_2^{3}x_3^{13}x_4^{2}x_5^{4} $& 111. & $x_1x_2^{3}x_3^{14}x_4x_5^{4} $\cr  
 \end{tabular}}

\centerline{\begin{tabular}{rlrlrl}
112. & $x_1x_2^{3}x_3^{14}x_4^{4}x_5 $& 113. & $x_1x_2^{6}x_3x_4x_5^{14} $& 114. & $x_1x_2^{6}x_3x_4^{2}x_5^{13} $\cr  
115. & $x_1x_2^{6}x_3x_4^{3}x_5^{12} $& 116. & $x_1x_2^{6}x_3x_4^{6}x_5^{9} $& 117. & $x_1x_2^{6}x_3x_4^{7}x_5^{8} $\cr  
118. & $x_1x_2^{6}x_3x_4^{10}x_5^{5} $& 119. & $x_1x_2^{6}x_3x_4^{11}x_5^{4} $& 120. & $x_1x_2^{6}x_3x_4^{14}x_5 $\cr  
121. & $x_1x_2^{6}x_3^{3}x_4x_5^{12} $& 122. & $x_1x_2^{6}x_3^{3}x_4^{4}x_5^{9} $& 123. & $x_1x_2^{6}x_3^{3}x_4^{5}x_5^{8} $\cr  
124. & $x_1x_2^{6}x_3^{3}x_4^{8}x_5^{5} $& 125. & $x_1x_2^{6}x_3^{3}x_4^{9}x_5^{4} $& 126. & $x_1x_2^{6}x_3^{3}x_4^{12}x_5 $\cr  
127. & $x_1x_2^{6}x_3^{7}x_4x_5^{8} $& 128. & $x_1x_2^{6}x_3^{7}x_4^{8}x_5 $& 129. & $x_1x_2^{6}x_3^{9}x_4^{2}x_5^{5} $\cr  
130. & $x_1x_2^{6}x_3^{9}x_4^{3}x_5^{4} $& 131. & $x_1x_2^{6}x_3^{11}x_4x_5^{4} $& 132. & $x_1x_2^{6}x_3^{11}x_4^{4}x_5 $\cr  
133. & $x_1x_2^{7}x_3x_4^{2}x_5^{12} $& 134. & $x_1x_2^{7}x_3x_4^{6}x_5^{8} $& 135. & $x_1x_2^{7}x_3x_4^{10}x_5^{4} $\cr  
136. & $x_1x_2^{7}x_3^{2}x_4x_5^{12} $& 137. & $x_1x_2^{7}x_3^{2}x_4^{4}x_5^{9} $& 138. & $x_1x_2^{7}x_3^{2}x_4^{5}x_5^{8} $\cr  
139. & $x_1x_2^{7}x_3^{2}x_4^{8}x_5^{5} $& 140. & $x_1x_2^{7}x_3^{2}x_4^{9}x_5^{4} $& 141. & $x_1x_2^{7}x_3^{2}x_4^{12}x_5 $\cr  
142. & $x_1x_2^{7}x_3^{3}x_4^{4}x_5^{8} $& 143. & $x_1x_2^{7}x_3^{3}x_4^{8}x_5^{4} $& 144. & $x_1x_2^{7}x_3^{6}x_4x_5^{8} $\cr  
145. & $x_1x_2^{7}x_3^{6}x_4^{8}x_5 $& 146. & $x_1x_2^{7}x_3^{8}x_4^{2}x_5^{5} $& 147. & $x_1x_2^{7}x_3^{8}x_4^{3}x_5^{4} $\cr  
148. & $x_1x_2^{7}x_3^{9}x_4^{2}x_5^{4} $& 149. & $x_1x_2^{7}x_3^{10}x_4x_5^{4} $& 150. & $x_1x_2^{7}x_3^{10}x_4^{4}x_5 $\cr  
151. & $x_1x_2^{14}x_3x_4x_5^{6} $& 152. & $x_1x_2^{14}x_3x_4^{2}x_5^{5} $& 153. & $x_1x_2^{14}x_3x_4^{3}x_5^{4} $\cr  
154. & $x_1x_2^{14}x_3x_4^{6}x_5 $& 155. & $x_1x_2^{14}x_3^{3}x_4x_5^{4} $& 156. & $x_1x_2^{14}x_3^{3}x_4^{4}x_5 $\cr  
157. & $x_1x_2^{15}x_3x_4^{2}x_5^{4} $& 158. & $x_1x_2^{15}x_3^{2}x_4x_5^{4} $& 159. & $x_1x_2^{15}x_3^{2}x_4^{4}x_5 $\cr  
160. & $x_1^{3}x_2x_3x_4^{4}x_5^{14} $& 161. & $x_1^{3}x_2x_3x_4^{6}x_5^{12} $& 162. & $x_1^{3}x_2x_3x_4^{12}x_5^{6} $\cr  
163. & $x_1^{3}x_2x_3x_4^{14}x_5^{4} $& 164. & $x_1^{3}x_2x_3^{2}x_4^{4}x_5^{13} $& 165. & $x_1^{3}x_2x_3^{2}x_4^{5}x_5^{12} $\cr  
166. & $x_1^{3}x_2x_3^{2}x_4^{12}x_5^{5} $& 167. & $x_1^{3}x_2x_3^{2}x_4^{13}x_5^{4} $& 168. & $x_1^{3}x_2x_3^{3}x_4^{4}x_5^{12} $\cr  
169. & $x_1^{3}x_2x_3^{3}x_4^{12}x_5^{4} $& 170. & $x_1^{3}x_2x_3^{4}x_4x_5^{14} $& 171. & $x_1^{3}x_2x_3^{4}x_4^{2}x_5^{13} $\cr  
172. & $x_1^{3}x_2x_3^{4}x_4^{3}x_5^{12} $& 173. & $x_1^{3}x_2x_3^{4}x_4^{6}x_5^{9} $& 174. & $x_1^{3}x_2x_3^{4}x_4^{7}x_5^{8} $\cr  
175. & $x_1^{3}x_2x_3^{4}x_4^{8}x_5^{7} $& 176. & $x_1^{3}x_2x_3^{4}x_4^{9}x_5^{6} $& 177. & $x_1^{3}x_2x_3^{4}x_4^{10}x_5^{5} $\cr  
178. & $x_1^{3}x_2x_3^{4}x_4^{11}x_5^{4} $& 179. & $x_1^{3}x_2x_3^{4}x_4^{14}x_5 $& 180. & $x_1^{3}x_2x_3^{5}x_4^{2}x_5^{12} $\cr  
181. & $x_1^{3}x_2x_3^{5}x_4^{6}x_5^{8} $& 182. & $x_1^{3}x_2x_3^{5}x_4^{8}x_5^{6} $& 183. & $x_1^{3}x_2x_3^{5}x_4^{10}x_5^{4} $\cr  
184. & $x_1^{3}x_2x_3^{6}x_4x_5^{12} $& 185. & $x_1^{3}x_2x_3^{6}x_4^{4}x_5^{9} $& 186. & $x_1^{3}x_2x_3^{6}x_4^{5}x_5^{8} $\cr  
187. & $x_1^{3}x_2x_3^{6}x_4^{8}x_5^{5} $& 188. & $x_1^{3}x_2x_3^{6}x_4^{9}x_5^{4} $& 189. & $x_1^{3}x_2x_3^{6}x_4^{12}x_5 $\cr  
190. & $x_1^{3}x_2x_3^{7}x_4^{4}x_5^{8} $& 191. & $x_1^{3}x_2x_3^{7}x_4^{8}x_5^{4} $& 192. & $x_1^{3}x_2x_3^{12}x_4x_5^{6} $\cr  
193. & $x_1^{3}x_2x_3^{12}x_4^{2}x_5^{5} $& 194. & $x_1^{3}x_2x_3^{12}x_4^{3}x_5^{4} $& 195. & $x_1^{3}x_2x_3^{12}x_4^{6}x_5 $\cr  
196. & $x_1^{3}x_2x_3^{13}x_4^{2}x_5^{4} $& 197. & $x_1^{3}x_2x_3^{14}x_4x_5^{4} $& 198. & $x_1^{3}x_2x_3^{14}x_4^{4}x_5 $\cr  
199. & $x_1^{3}x_2^{3}x_3x_4^{4}x_5^{12} $& 200. & $x_1^{3}x_2^{3}x_3x_4^{12}x_5^{4} $& 201. & $x_1^{3}x_2^{3}x_3^{4}x_4x_5^{12} $\cr  
202. & $x_1^{3}x_2^{3}x_3^{4}x_4^{4}x_5^{9} $& 203. & $x_1^{3}x_2^{3}x_3^{4}x_4^{5}x_5^{8} $& 204. & $x_1^{3}x_2^{3}x_3^{4}x_4^{8}x_5^{5} $\cr  
205. & $x_1^{3}x_2^{3}x_3^{4}x_4^{9}x_5^{4} $& 206. & $x_1^{3}x_2^{3}x_3^{4}x_4^{12}x_5 $& 207. & $x_1^{3}x_2^{3}x_3^{5}x_4^{4}x_5^{8} $\cr  
208. & $x_1^{3}x_2^{3}x_3^{5}x_4^{8}x_5^{4} $& 209. & $x_1^{3}x_2^{3}x_3^{12}x_4x_5^{4} $& 210. & $x_1^{3}x_2^{3}x_3^{12}x_4^{4}x_5 $\cr  
211. & $x_1^{3}x_2^{4}x_3x_4x_5^{14} $& 212. & $x_1^{3}x_2^{4}x_3x_4^{2}x_5^{13} $& 213. & $x_1^{3}x_2^{4}x_3x_4^{3}x_5^{12} $\cr  
214. & $x_1^{3}x_2^{4}x_3x_4^{6}x_5^{9} $& 215. & $x_1^{3}x_2^{4}x_3x_4^{7}x_5^{8} $& 216. & $x_1^{3}x_2^{4}x_3x_4^{10}x_5^{5} $\cr  
217. & $x_1^{3}x_2^{4}x_3x_4^{11}x_5^{4} $& 218. & $x_1^{3}x_2^{4}x_3x_4^{14}x_5 $& 219. & $x_1^{3}x_2^{4}x_3^{3}x_4x_5^{12} $\cr  
220. & $x_1^{3}x_2^{4}x_3^{3}x_4^{5}x_5^{8} $& 221. & $x_1^{3}x_2^{4}x_3^{3}x_4^{8}x_5^{5} $& 222. & $x_1^{3}x_2^{4}x_3^{3}x_4^{9}x_5^{4} $\cr  
223. & $x_1^{3}x_2^{4}x_3^{7}x_4x_5^{8} $& 224. & $x_1^{3}x_2^{4}x_3^{7}x_4^{8}x_5 $& 225. & $x_1^{3}x_2^{4}x_3^{9}x_4^{2}x_5^{5} $\cr  
226. & $x_1^{3}x_2^{4}x_3^{9}x_4^{3}x_5^{4} $& 227. & $x_1^{3}x_2^{4}x_3^{11}x_4x_5^{4} $& 228. & $x_1^{3}x_2^{5}x_3x_4^{2}x_5^{12} $\cr  
229. & $x_1^{3}x_2^{5}x_3x_4^{6}x_5^{8} $& 230. & $x_1^{3}x_2^{5}x_3x_4^{10}x_5^{4} $& 231. & $x_1^{3}x_2^{5}x_3^{2}x_4x_5^{12} $\cr  
232. & $x_1^{3}x_2^{5}x_3^{2}x_4^{4}x_5^{9} $& 233. & $x_1^{3}x_2^{5}x_3^{2}x_4^{5}x_5^{8} $& 234. & $x_1^{3}x_2^{5}x_3^{2}x_4^{8}x_5^{5} $\cr  
235. & $x_1^{3}x_2^{5}x_3^{2}x_4^{9}x_5^{4} $& 236. & $x_1^{3}x_2^{5}x_3^{2}x_4^{12}x_5 $& 237. & $x_1^{3}x_2^{5}x_3^{3}x_4^{4}x_5^{8} $\cr  
238. & $x_1^{3}x_2^{5}x_3^{3}x_4^{8}x_5^{4} $& 239. & $x_1^{3}x_2^{5}x_3^{6}x_4x_5^{8} $& 240. & $x_1^{3}x_2^{5}x_3^{6}x_4^{8}x_5 $\cr  
241. & $x_1^{3}x_2^{5}x_3^{8}x_4^{2}x_5^{5} $& 242. & $x_1^{3}x_2^{5}x_3^{8}x_4^{3}x_5^{4} $& 243. & $x_1^{3}x_2^{5}x_3^{9}x_4^{2}x_5^{4} $\cr  
244. & $x_1^{3}x_2^{5}x_3^{10}x_4x_5^{4} $& 245. & $x_1^{3}x_2^{5}x_3^{10}x_4^{4}x_5 $& 246. & $x_1^{3}x_2^{7}x_3x_4^{4}x_5^{8} $\cr  
247. & $x_1^{3}x_2^{7}x_3x_4^{8}x_5^{4} $& 248. & $x_1^{3}x_2^{7}x_3^{4}x_4x_5^{8} $& 249. & $x_1^{3}x_2^{7}x_3^{4}x_4^{8}x_5 $\cr  
250. & $x_1^{3}x_2^{7}x_3^{8}x_4x_5^{4} $& 251. & $x_1^{3}x_2^{7}x_3^{8}x_4^{4}x_5 $& 252. & $x_1^{3}x_2^{12}x_3x_4^{2}x_5^{5} $\cr  
253. & $x_1^{3}x_2^{12}x_3x_4^{3}x_5^{4} $& 254. & $x_1^{3}x_2^{12}x_3^{3}x_4x_5^{4} $& 255. & $x_1^{3}x_2^{13}x_3x_4^{2}x_5^{4} $\cr  
256. & $x_1^{3}x_2^{13}x_3^{2}x_4x_5^{4} $& 257. & $x_1^{3}x_2^{13}x_3^{2}x_4^{4}x_5 $& 258. & $x_1^{7}x_2x_3x_4^{2}x_5^{12} $\cr  
 \end{tabular}}
\centerline{\begin{tabular}{rlrlrl}
259. & $x_1^{7}x_2x_3x_4^{6}x_5^{8} $& 260. & $x_1^{7}x_2x_3x_4^{10}x_5^{4} $& 261. & $x_1^{7}x_2x_3^{2}x_4x_5^{12} $\cr  
262. & $x_1^{7}x_2x_3^{2}x_4^{4}x_5^{9} $& 263. & $x_1^{7}x_2x_3^{2}x_4^{5}x_5^{8} $& 264. & $x_1^{7}x_2x_3^{2}x_4^{8}x_5^{5} $\cr  
265. & $x_1^{7}x_2x_3^{2}x_4^{9}x_5^{4} $& 266. & $x_1^{7}x_2x_3^{2}x_4^{12}x_5 $& 267. & $x_1^{7}x_2x_3^{3}x_4^{4}x_5^{8} $\cr  
268. & $x_1^{7}x_2x_3^{3}x_4^{8}x_5^{4} $& 269. & $x_1^{7}x_2x_3^{6}x_4x_5^{8} $& 270. & $x_1^{7}x_2x_3^{6}x_4^{8}x_5 $\cr  
271. & $x_1^{7}x_2x_3^{8}x_4^{2}x_5^{5} $& 272. & $x_1^{7}x_2x_3^{8}x_4^{3}x_5^{4} $& 273. & $x_1^{7}x_2x_3^{9}x_4^{2}x_5^{4} $\cr  
274. & $x_1^{7}x_2x_3^{10}x_4x_5^{4} $& 275. & $x_1^{7}x_2x_3^{10}x_4^{4}x_5 $& 276. & $x_1^{7}x_2^{3}x_3x_4^{4}x_5^{8} $\cr  
277. & $x_1^{7}x_2^{3}x_3x_4^{8}x_5^{4} $& 278. & $x_1^{7}x_2^{3}x_3^{4}x_4x_5^{8} $& 279. & $x_1^{7}x_2^{3}x_3^{4}x_4^{8}x_5 $\cr  
280. & $x_1^{7}x_2^{3}x_3^{8}x_4x_5^{4} $& 281. & $x_1^{7}x_2^{3}x_3^{8}x_4^{4}x_5 $& 282. & $x_1^{7}x_2^{8}x_3x_4^{2}x_5^{5} $\cr  
283. & $x_1^{7}x_2^{8}x_3x_4^{3}x_5^{4} $& 284. & $x_1^{7}x_2^{8}x_3^{3}x_4x_5^{4} $& 285. & $x_1^{7}x_2^{9}x_3x_4^{2}x_5^{4} $\cr  
286. & $x_1^{7}x_2^{9}x_3^{2}x_4x_5^{4} $& 287. & $x_1^{7}x_2^{9}x_3^{2}x_4^{4}x_5 $& 288. & $x_1^{15}x_2x_3x_4^{2}x_5^{4} $\cr  
289. & $x_1^{15}x_2x_3^{2}x_4x_5^{4} $& 290. & $x_1^{15}x_2x_3^{2}x_4^{4}x_5 $& 
\end{tabular}}

\medskip
$B_5^+(3,4,1,1)$ is the set of 105 monomials $b_t = b_{23,t}, \ 291 \leqslant t \leqslant 395$:

\medskip
 \centerline{\begin{tabular}{rlrlrl}
291. & $x_1x_2^{2}x_3^{2}x_4^{3}x_5^{15} $& 292. & $x_1x_2^{2}x_3^{2}x_4^{7}x_5^{11} $& 293. & $x_1x_2^{2}x_3^{2}x_4^{15}x_5^{3} $\cr  
294. & $x_1x_2^{2}x_3^{3}x_4^{2}x_5^{15} $& 295. & $x_1x_2^{2}x_3^{3}x_4^{3}x_5^{14} $& 296. & $x_1x_2^{2}x_3^{3}x_4^{6}x_5^{11} $\cr  
297. & $x_1x_2^{2}x_3^{3}x_4^{7}x_5^{10} $& 298. & $x_1x_2^{2}x_3^{3}x_4^{14}x_5^{3} $& 299. & $x_1x_2^{2}x_3^{3}x_4^{15}x_5^{2} $\cr  
300. & $x_1x_2^{2}x_3^{7}x_4^{2}x_5^{11} $& 301. & $x_1x_2^{2}x_3^{7}x_4^{3}x_5^{10} $& 302. & $x_1x_2^{2}x_3^{7}x_4^{10}x_5^{3} $\cr  
303. & $x_1x_2^{2}x_3^{7}x_4^{11}x_5^{2} $& 304. & $x_1x_2^{2}x_3^{15}x_4^{2}x_5^{3} $& 305. & $x_1x_2^{2}x_3^{15}x_4^{3}x_5^{2} $\cr  
306. & $x_1x_2^{3}x_3^{2}x_4^{2}x_5^{15} $& 307. & $x_1x_2^{3}x_3^{2}x_4^{3}x_5^{14} $& 308. & $x_1x_2^{3}x_3^{2}x_4^{6}x_5^{11} $\cr  
309. & $x_1x_2^{3}x_3^{2}x_4^{7}x_5^{10} $& 310. & $x_1x_2^{3}x_3^{2}x_4^{14}x_5^{3} $& 311. & $x_1x_2^{3}x_3^{2}x_4^{15}x_5^{2} $\cr  
312. & $x_1x_2^{3}x_3^{3}x_4^{2}x_5^{14} $& 313. & $x_1x_2^{3}x_3^{3}x_4^{6}x_5^{10} $& 314. & $x_1x_2^{3}x_3^{3}x_4^{14}x_5^{2} $\cr  
315. & $x_1x_2^{3}x_3^{6}x_4^{2}x_5^{11} $& 316. & $x_1x_2^{3}x_3^{6}x_4^{3}x_5^{10} $& 317. & $x_1x_2^{3}x_3^{6}x_4^{10}x_5^{3} $\cr  
318. & $x_1x_2^{3}x_3^{6}x_4^{11}x_5^{2} $& 319. & $x_1x_2^{3}x_3^{7}x_4^{2}x_5^{10} $& 320. & $x_1x_2^{3}x_3^{7}x_4^{10}x_5^{2} $\cr  
321. & $x_1x_2^{3}x_3^{14}x_4^{2}x_5^{3} $& 322. & $x_1x_2^{3}x_3^{14}x_4^{3}x_5^{2} $& 323. & $x_1x_2^{3}x_3^{15}x_4^{2}x_5^{2} $\cr  
324. & $x_1x_2^{7}x_3^{2}x_4^{2}x_5^{11} $& 325. & $x_1x_2^{7}x_3^{2}x_4^{3}x_5^{10} $& 326. & $x_1x_2^{7}x_3^{2}x_4^{10}x_5^{3} $\cr  
327. & $x_1x_2^{7}x_3^{2}x_4^{11}x_5^{2} $& 328. & $x_1x_2^{7}x_3^{3}x_4^{2}x_5^{10} $& 329. & $x_1x_2^{7}x_3^{3}x_4^{10}x_5^{2} $\cr  
330. & $x_1x_2^{7}x_3^{10}x_4^{2}x_5^{3} $& 331. & $x_1x_2^{7}x_3^{10}x_4^{3}x_5^{2} $& 332. & $x_1x_2^{7}x_3^{11}x_4^{2}x_5^{2} $\cr  
333. & $x_1x_2^{15}x_3^{2}x_4^{2}x_5^{3} $& 334. & $x_1x_2^{15}x_3^{2}x_4^{3}x_5^{2} $& 335. & $x_1x_2^{15}x_3^{3}x_4^{2}x_5^{2} $\cr  
336. & $x_1^{3}x_2x_3^{2}x_4^{2}x_5^{15} $& 337. & $x_1^{3}x_2x_3^{2}x_4^{3}x_5^{14} $& 338. & $x_1^{3}x_2x_3^{2}x_4^{6}x_5^{11} $\cr  
339. & $x_1^{3}x_2x_3^{2}x_4^{7}x_5^{10} $& 340. & $x_1^{3}x_2x_3^{2}x_4^{14}x_5^{3} $& 341. & $x_1^{3}x_2x_3^{2}x_4^{15}x_5^{2} $\cr  
342. & $x_1^{3}x_2x_3^{3}x_4^{2}x_5^{14} $& 343. & $x_1^{3}x_2x_3^{3}x_4^{6}x_5^{10} $& 344. & $x_1^{3}x_2x_3^{3}x_4^{14}x_5^{2} $\cr  
345. & $x_1^{3}x_2x_3^{6}x_4^{2}x_5^{11} $& 346. & $x_1^{3}x_2x_3^{6}x_4^{3}x_5^{10} $& 347. & $x_1^{3}x_2x_3^{6}x_4^{10}x_5^{3} $\cr  
348. & $x_1^{3}x_2x_3^{6}x_4^{11}x_5^{2} $& 349. & $x_1^{3}x_2x_3^{7}x_4^{2}x_5^{10} $& 350. & $x_1^{3}x_2x_3^{7}x_4^{10}x_5^{2} $\cr  
351. & $x_1^{3}x_2x_3^{14}x_4^{2}x_5^{3} $& 352. & $x_1^{3}x_2x_3^{14}x_4^{3}x_5^{2} $& 353. & $x_1^{3}x_2x_3^{15}x_4^{2}x_5^{2} $\cr  
354. & $x_1^{3}x_2^{3}x_3x_4^{2}x_5^{14} $& 355. & $x_1^{3}x_2^{3}x_3x_4^{6}x_5^{10} $& 356. & $x_1^{3}x_2^{3}x_3x_4^{14}x_5^{2} $\cr  
357. & $x_1^{3}x_2^{3}x_3^{5}x_4^{2}x_5^{10} $& 358. & $x_1^{3}x_2^{3}x_3^{5}x_4^{10}x_5^{2} $& 359. & $x_1^{3}x_2^{3}x_3^{13}x_4^{2}x_5^{2} $\cr  
360. & $x_1^{3}x_2^{5}x_3^{2}x_4^{2}x_5^{11} $& 361. & $x_1^{3}x_2^{5}x_3^{2}x_4^{3}x_5^{10} $& 362. & $x_1^{3}x_2^{5}x_3^{2}x_4^{10}x_5^{3} $\cr  
363. & $x_1^{3}x_2^{5}x_3^{2}x_4^{11}x_5^{2} $& 364. & $x_1^{3}x_2^{5}x_3^{3}x_4^{2}x_5^{10} $& 365. & $x_1^{3}x_2^{5}x_3^{3}x_4^{10}x_5^{2} $\cr  
366. & $x_1^{3}x_2^{5}x_3^{10}x_4^{2}x_5^{3} $& 367. & $x_1^{3}x_2^{5}x_3^{10}x_4^{3}x_5^{2} $& 368. & $x_1^{3}x_2^{5}x_3^{11}x_4^{2}x_5^{2} $\cr  
369. & $x_1^{3}x_2^{7}x_3x_4^{2}x_5^{10} $& 370. & $x_1^{3}x_2^{7}x_3x_4^{10}x_5^{2} $& 371. & $x_1^{3}x_2^{7}x_3^{9}x_4^{2}x_5^{2} $\cr  
372. & $x_1^{3}x_2^{13}x_3^{2}x_4^{2}x_5^{3} $& 373. & $x_1^{3}x_2^{13}x_3^{2}x_4^{3}x_5^{2} $& 374. & $x_1^{3}x_2^{13}x_3^{3}x_4^{2}x_5^{2} $\cr  
375. & $x_1^{3}x_2^{15}x_3x_4^{2}x_5^{2} $& 376. & $x_1^{7}x_2x_3^{2}x_4^{2}x_5^{11} $& 377. & $x_1^{7}x_2x_3^{2}x_4^{3}x_5^{10} $\cr  
378. & $x_1^{7}x_2x_3^{2}x_4^{10}x_5^{3} $& 379. & $x_1^{7}x_2x_3^{2}x_4^{11}x_5^{2} $& 380. & $x_1^{7}x_2x_3^{3}x_4^{2}x_5^{10} $\cr  
381. & $x_1^{7}x_2x_3^{3}x_4^{10}x_5^{2} $& 382. & $x_1^{7}x_2x_3^{10}x_4^{2}x_5^{3} $& 383. & $x_1^{7}x_2x_3^{10}x_4^{3}x_5^{2} $\cr  
384. & $x_1^{7}x_2x_3^{11}x_4^{2}x_5^{2} $& 385. & $x_1^{7}x_2^{3}x_3x_4^{2}x_5^{10} $& 386. & $x_1^{7}x_2^{3}x_3x_4^{10}x_5^{2} $\cr  
387. & $x_1^{7}x_2^{3}x_3^{9}x_4^{2}x_5^{2} $& 388. & $x_1^{7}x_2^{9}x_3^{2}x_4^{2}x_5^{3} $& 389. & $x_1^{7}x_2^{9}x_3^{2}x_4^{3}x_5^{2} $\cr  
390. & $x_1^{7}x_2^{9}x_3^{3}x_4^{2}x_5^{2} $& 391. & $x_1^{7}x_2^{11}x_3x_4^{2}x_5^{2} $& 392. & $x_1^{15}x_2x_3^{2}x_4^{2}x_5^{3} $\cr  
393. & $x_1^{15}x_2x_3^{2}x_4^{3}x_5^{2} $& 394. & $x_1^{15}x_2x_3^{3}x_4^{2}x_5^{2} $& 395. & $x_1^{15}x_2^{3}x_3x_4^{2}x_5^{2} $\cr 
\end{tabular}}
\medskip
$B_5^+(3,4,3)$ is the set of 24 monomials $b_t = b_{23,t}, \ 396 \leqslant t \leqslant 419$:

\medskip
 \centerline{\begin{tabular}{rlrlrlrl}
396.&  $x_1x_2^{3}x_3^{6}x_4^{6}x_5^{7}$ &  397.&  $x_1x_2^{3}x_3^{6}x_4^{7}x_5^{6}$ &  398.&  $x_1x_2^{3}x_3^{7}x_4^{6}x_5^{6}$ &  399.&  $x_1x_2^{7}x_3^{3}x_4^{6}x_5^{6}$\cr  
400.&  $x_1^{3}x_2x_3^{6}x_4^{6}x_5^{7}$ &  401.&  $x_1^{3}x_2x_3^{6}x_4^{7}x_5^{6}$ &  402.&  $x_1^{3}x_2x_3^{7}x_4^{6}x_5^{6}$ &  403.&  $x_1^{3}x_2^{3}x_3^{5}x_4^{6}x_5^{6}$\cr  
404.&  $x_1^{3}x_2^{5}x_3^{2}x_4^{6}x_5^{7}$ &  405.&  $x_1^{3}x_2^{5}x_3^{2}x_4^{7}x_5^{6}$ &  406.&  $x_1^{3}x_2^{5}x_3^{3}x_4^{6}x_5^{6}$ &  407.&  $x_1^{3}x_2^{5}x_3^{6}x_4^{2}x_5^{7}$\cr  
408.&  $x_1^{3}x_2^{5}x_3^{6}x_4^{3}x_5^{6}$ &  409.&  $x_1^{3}x_2^{5}x_3^{6}x_4^{6}x_5^{3}$ &  410.&  $x_1^{3}x_2^{5}x_3^{6}x_4^{7}x_5^{2}$ &  411.&  $x_1^{3}x_2^{5}x_3^{7}x_4^{2}x_5^{6}$\cr  
412.&  $x_1^{3}x_2^{5}x_3^{7}x_4^{6}x_5^{2}$ &  413.&  $x_1^{3}x_2^{7}x_3x_4^{6}x_5^{6}$ &  414.&  $x_1^{3}x_2^{7}x_3^{5}x_4^{2}x_5^{6}$ &  415.&  $x_1^{3}x_2^{7}x_3^{5}x_4^{6}x_5^{2}$\cr  
416.&  $x_1^{7}x_2x_3^{3}x_4^{6}x_5^{6}$ &  417.&  $x_1^{7}x_2^{3}x_3x_4^{6}x_5^{6}$ &  418.&  $x_1^{7}x_2^{3}x_3^{5}x_4^{2}x_5^{6}$ &  419.&  $x_1^{7}x_2^{3}x_3^{5}x_4^{6}x_5^{2}$\cr   
\end{tabular}}

\medskip
\subsection{Some $\Sigma_5$-invariant classes of degree 23 in ${P_5}$.}\ \label{s74}

\medskip
We list here some polynomials which present the $\Sigma_5$-invariant classes of degree $23$ in $QP_5$.

\begin{align*} p_4 &= x_1x_2^{2}x_3^{3}x_4^{3}x_5^{14} + x_1x_2^{2}x_3^{3}x_4^{14}x_5^{3} + x_1x_2^{2}x_3^{7}x_4^{3}x_5^{10} + x_1x_2^{2}x_3^{7}x_4^{10}x_5^{3}\\ 
&\quad + x_1x_2^{3}x_3^{2}x_4^{3}x_5^{14} + x_1x_2^{3}x_3^{2}x_4^{14}x_5^{3} + x_1x_2^{3}x_3^{3}x_4^{2}x_5^{14} + x_1x_2^{3}x_3^{3}x_4^{14}x_5^{2}\\ 
&\quad + x_1x_2^{3}x_3^{14}x_4^{2}x_5^{3} + x_1x_2^{3}x_3^{14}x_4^{3}x_5^{2} + x_1x_2^{7}x_3^{2}x_4^{3}x_5^{10} + x_1x_2^{7}x_3^{2}x_4^{10}x_5^{3}\\ 
&\quad + x_1x_2^{7}x_3^{3}x_4^{2}x_5^{10} + x_1x_2^{7}x_3^{3}x_4^{10}x_5^{2} + x_1x_2^{7}x_3^{10}x_4^{2}x_5^{3} + x_1x_2^{7}x_3^{10}x_4^{3}x_5^{2}\\ 
&\quad + x_1^{3}x_2x_3^{2}x_4^{3}x_5^{14} + x_1^{3}x_2x_3^{2}x_4^{14}x_5^{3} + x_1^{3}x_2x_3^{3}x_4^{2}x_5^{14} + x_1^{3}x_2x_3^{3}x_4^{14}x_5^{2}\\ 
&\quad + x_1^{3}x_2x_3^{14}x_4^{2}x_5^{3} + x_1^{3}x_2x_3^{14}x_4^{3}x_5^{2} + x_1^{3}x_2^{3}x_3x_4^{2}x_5^{14} + x_1^{3}x_2^{3}x_3x_4^{14}x_5^{2}\\ 
&\quad + x_1^{3}x_2^{3}x_3^{13}x_4^{2}x_5^{2} + x_1^{3}x_2^{13}x_3^{2}x_4^{2}x_5^{3} + x_1^{3}x_2^{13}x_3^{2}x_4^{3}x_5^{2} + x_1^{3}x_2^{13}x_3^{3}x_4^{2}x_5^{2}\\ 
&\quad + x_1^{7}x_2x_3^{2}x_4^{3}x_5^{10} + x_1^{7}x_2x_3^{2}x_4^{10}x_5^{3} + x_1^{7}x_2x_3^{3}x_4^{2}x_5^{10} + x_1^{7}x_2x_3^{3}x_4^{10}x_5^{2}\\ 
&\quad + x_1^{7}x_2x_3^{10}x_4^{2}x_5^{3} + x_1^{7}x_2x_3^{10}x_4^{3}x_5^{2} + x_1^{7}x_2^{3}x_3x_4^{2}x_5^{10} + x_1^{7}x_2^{3}x_3x_4^{10}x_5^{2}\\ 
&\quad + x_1^{7}x_2^{3}x_3^{9}x_4^{2}x_5^{2} + x_1^{7}x_2^{9}x_3^{2}x_4^{2}x_5^{3} + x_1^{7}x_2^{9}x_3^{2}x_4^{3}x_5^{2} + x_1^{7}x_2^{9}x_3^{3}x_4^{2}x_5^{2}.
\end{align*}
\begin{align*}
p_5 &= x_2x_3x_4^{7}x_5^{14} +  x_2x_3^{7}x_4x_5^{14} +  x_2x_3^{7}x_4^{14}x_5 +  x_2^{7}x_3x_4x_5^{14} +  x_2^{7}x_3x_4^{14}x_5 \\ 
&\quad +  x_1x_3x_4^{7}x_5^{14} +  x_1x_3^{7}x_4x_5^{14} +  x_1x_3^{7}x_4^{14}x_5 +  x_1x_2x_4^{7}x_5^{14} +  x_1x_2x_3^{7}x_5^{14} \\ 
&\quad +  x_1x_2x_3^{7}x_4^{14} +  x_1x_2^{7}x_4x_5^{14} +  x_1x_2^{7}x_4^{14}x_5 +  x_1x_2^{7}x_3x_5^{14} +  x_1x_2^{7}x_3x_4^{14} \\ 
&\quad +  x_1x_2^{7}x_3^{14}x_5 +  x_1x_2^{7}x_3^{14}x_4 +  x_1^{7}x_3x_4x_5^{14} +  x_1^{7}x_3x_4^{14}x_5 +  x_1^{7}x_2x_4x_5^{14}\\ 
&\quad +  x_1^{7}x_2x_4^{14}x_5 +  x_1^{7}x_2x_3x_5^{14} +  x_1^{7}x_2x_3x_4^{14} +  x_1^{7}x_2x_3^{14}x_5 +  x_1^{7}x_2x_3^{14}x_4 \\ 
&\quad +  x_2x_3^{3}x_4^{13}x_5^{6} +  x_2^{3}x_3x_4^{13}x_5^{6} +  x_2^{3}x_3^{13}x_4x_5^{6} +  x_2^{3}x_3^{13}x_4^{6}x_5 +  x_1x_3^{3}x_4^{13}x_5^{6} \\ 
&\quad +  x_1x_2^{3}x_4^{13}x_5^{6} +  x_1x_2^{3}x_3^{13}x_5^{6} +  x_1x_2^{3}x_3^{13}x_4^{6} +  x_1^{3}x_3x_4^{13}x_5^{6} +  x_1^{3}x_3^{13}x_4x_5^{6} \\ 
&\quad +  x_1^{3}x_3^{13}x_4^{6}x_5 +  x_1^{3}x_2x_4^{13}x_5^{6} +  x_1^{3}x_2x_3^{13}x_5^{6} +  x_1^{3}x_2x_3^{13}x_4^{6} +  x_1^{3}x_2^{13}x_4x_5^{6} \\ 
&\quad +  x_1^{3}x_2^{13}x_4^{6}x_5 +  x_1^{3}x_2^{13}x_3x_5^{6} +  x_1^{3}x_2^{13}x_3x_4^{6} +  x_1^{3}x_2^{13}x_3^{6}x_5 +  x_1^{3}x_2^{13}x_3^{6}x_4 \\ 
&\quad +  x_2^{7}x_3^{11}x_4x_5^{4} +  x_2^{7}x_3^{11}x_4^{4}x_5 +  x_1^{7}x_3^{11}x_4x_5^{4} +  x_1^{7}x_3^{11}x_4^{4}x_5 +  x_1^{7}x_2^{11}x_4x_5^{4} \\ 
&\quad +  x_1^{7}x_2^{11}x_4^{4}x_5 +  x_1^{7}x_2^{11}x_3x_5^{4} +  x_1^{7}x_2^{11}x_3x_4^{4} +  x_1^{7}x_2^{11}x_3^{4}x_5 +  x_1^{7}x_2^{11}x_3^{4}x_4 \\ 
&\quad +  x_2x_3^{6}x_4^{7}x_5^{9} +  x_1x_3^{6}x_4^{7}x_5^{9} +  x_1x_2^{6}x_4^{7}x_5^{9} +  x_1x_2^{6}x_3^{7}x_5^{9} +  x_1x_2^{6}x_3^{7}x_4^{9} \\ 
&\quad +  x_2^{3}x_3^{4}x_4^{7}x_5^{9} +  x_1^{3}x_3^{4}x_4^{7}x_5^{9} +  x_1^{3}x_2^{4}x_4^{7}x_5^{9} +  x_1^{3}x_2^{4}x_3^{7}x_5^{9} +  x_1^{3}x_2^{4}x_3^{7}x_4^{9}.
\end{align*}

\begin{align*}
p_6 &= x_2x_3^{7}x_4^{7}x_5^{8} +  x_2^{7}x_3x_4^{7}x_5^{8} +  x_2^{7}x_3^{7}x_4x_5^{8} +  x_2^{7}x_3^{7}x_4^{8}x_5 +  x_1x_3^{7}x_4^{7}x_5^{8} \\ 
&\quad +  x_1x_2^{7}x_4^{7}x_5^{8} +  x_1x_2^{7}x_3^{7}x_5^{8} +  x_1x_2^{7}x_3^{7}x_4^{8} +  x_1^{7}x_3x_4^{7}x_5^{8} +  x_1^{7}x_3^{7}x_4x_5^{8} \\ 
&\quad +  x_1^{7}x_3^{7}x_4^{8}x_5 +  x_1^{7}x_2x_4^{7}x_5^{8} +  x_1^{7}x_2x_3^{7}x_5^{8} +  x_1^{7}x_2x_3^{7}x_4^{8} +  x_1^{7}x_2^{7}x_4x_5^{8} \\ 
&\quad +  x_1^{7}x_2^{7}x_4^{8}x_5 +  x_1^{7}x_2^{7}x_3x_5^{8} +  x_1^{7}x_2^{7}x_3x_4^{8} +  x_1^{7}x_2^{7}x_3^{8}x_5 +  x_1^{7}x_2^{7}x_3^{8}x_4 \\ 
&\quad +  x_2x_3^{3}x_4^{7}x_5^{12} +  x_2^{7}x_3^{3}x_4x_5^{12} +  x_2^{7}x_3^{3}x_4^{12}x_5 +  x_1x_3^{3}x_4^{7}x_5^{12} +  x_1x_2^{3}x_4^{7}x_5^{12} \\ 
&\quad +  x_1x_2^{3}x_3^{7}x_5^{12} +  x_1x_2^{3}x_3^{7}x_4^{12} +  x_1^{7}x_3^{3}x_4x_5^{12} +  x_1^{7}x_3^{3}x_4^{12}x_5 +  x_1^{7}x_2^{3}x_4x_5^{12}\\ 
&\quad +  x_1^{7}x_2^{3}x_4^{12}x_5 +  x_1^{7}x_2^{3}x_3x_5^{12} +  x_1^{7}x_2^{3}x_3x_4^{12} +  x_1^{7}x_2^{3}x_3^{12}x_5 +  x_1^{7}x_2^{3}x_3^{12}x_4 \\ 
&\quad +  x_2x_3^{7}x_4^{6}x_5^{9} +  x_2^{7}x_3x_4^{6}x_5^{9} +  x_1x_3^{7}x_4^{6}x_5^{9} +  x_1x_2^{7}x_4^{6}x_5^{9} +  x_1x_2^{7}x_3^{6}x_5^{9} \\ 
&\quad +  x_1x_2^{7}x_3^{6}x_4^{9} +  x_1^{7}x_3x_4^{6}x_5^{9} +  x_1^{7}x_2x_4^{6}x_5^{9} +  x_1^{7}x_2x_3^{6}x_5^{9} +  x_1^{7}x_2x_3^{6}x_4^{9} \\ 
&\quad +  x_2^{3}x_3^{7}x_4^{4}x_5^{9} +  x_2^{3}x_3^{7}x_4^{9}x_5^{4} +  x_2^{7}x_3^{3}x_4^{4}x_5^{9} +  x_2^{7}x_3^{3}x_4^{9}x_5^{4} +  x_1^{3}x_3^{7}x_4^{4}x_5^{9} \\ 
&\quad +  x_1^{3}x_3^{7}x_4^{9}x_5^{4} +  x_1^{3}x_2^{7}x_4^{4}x_5^{9} +  x_1^{3}x_2^{7}x_4^{9}x_5^{4} +  x_1^{3}x_2^{7}x_3^{4}x_5^{9} +  x_1^{3}x_2^{7}x_3^{4}x_4^{9} \\ 
&\quad +  x_1^{3}x_2^{7}x_3^{9}x_5^{4} +  x_1^{3}x_2^{7}x_3^{9}x_4^{4} +  x_1^{7}x_3^{3}x_4^{4}x_5^{9} +  x_1^{7}x_3^{3}x_4^{9}x_5^{4} +  x_1^{7}x_2^{3}x_4^{4}x_5^{9} \\ 
&\quad +  x_1^{7}x_2^{3}x_4^{9}x_5^{4} +  x_1^{7}x_2^{3}x_3^{4}x_5^{9} +  x_1^{7}x_2^{3}x_3^{4}x_4^{9} +  x_1^{7}x_2^{3}x_3^{9}x_5^{4} +  x_1^{7}x_2^{3}x_3^{9}x_4^{4}.
\end{align*}
\begin{align*}
p_7 &= x_2x_3^{3}x_4^{5}x_5^{14} +  x_2x_3^{3}x_4^{14}x_5^{5} +  x_2^{3}x_3x_4^{5}x_5^{14} +  x_2^{3}x_3^{5}x_4x_5^{14} +  x_2^{3}x_3^{5}x_4^{14}x_5 \\ 
&\quad +  x_1x_3^{3}x_4^{5}x_5^{14} +  x_1x_3^{3}x_4^{14}x_5^{5} +  x_1x_2^{3}x_4^{5}x_5^{14} +  x_1x_2^{3}x_4^{14}x_5^{5} +  x_1x_2^{3}x_3^{5}x_5^{14} \\ 
&\quad +  x_1x_2^{3}x_3^{5}x_4^{14} +  x_1x_2^{3}x_3^{14}x_5^{5} +  x_1x_2^{3}x_3^{14}x_4^{5} +  x_1^{3}x_3x_4^{5}x_5^{14} +  x_1^{3}x_3^{5}x_4x_5^{14} \\ 
&\quad +  x_1^{3}x_3^{5}x_4^{14}x_5 +  x_1^{3}x_2x_4^{5}x_5^{14} +  x_1^{3}x_2x_3^{5}x_5^{14} +  x_1^{3}x_2x_3^{5}x_4^{14} +  x_1^{3}x_2^{5}x_4x_5^{14} \\ 
&\quad +  x_1^{3}x_2^{5}x_4^{14}x_5 +  x_1^{3}x_2^{5}x_3x_5^{14} +  x_1^{3}x_2^{5}x_3x_4^{14} +  x_1^{3}x_2^{5}x_3^{14}x_5 +  x_1^{3}x_2^{5}x_3^{14}x_4 \\ 
&\quad +  x_2x_3^{3}x_4^{6}x_5^{13} +  x_2x_3^{6}x_4^{3}x_5^{13} +  x_1x_3^{3}x_4^{6}x_5^{13} +  x_1x_3^{6}x_4^{3}x_5^{13} +  x_1x_2^{3}x_4^{6}x_5^{13} \\ 
&\quad +  x_1x_2^{3}x_3^{6}x_5^{13} +  x_1x_2^{3}x_3^{6}x_4^{13} +  x_1x_2^{6}x_4^{3}x_5^{13} +  x_1x_2^{6}x_3^{3}x_5^{13} +  x_1x_2^{6}x_3^{3}x_4^{13} \\ 
&\quad +  x_2^{3}x_3x_4^{7}x_5^{12} +  x_2^{3}x_3^{7}x_4x_5^{12} +  x_2^{3}x_3^{7}x_4^{12}x_5 +  x_1^{3}x_3x_4^{7}x_5^{12} +  x_1^{3}x_3^{7}x_4x_5^{12} \\ 
&\quad +  x_1^{3}x_3^{7}x_4^{12}x_5 +  x_1^{3}x_2x_4^{7}x_5^{12} +  x_1^{3}x_2x_3^{7}x_5^{12} +  x_1^{3}x_2x_3^{7}x_4^{12} +  x_1^{3}x_2^{7}x_4x_5^{12} \\ 
&\quad +  x_1^{3}x_2^{7}x_4^{12}x_5 +  x_1^{3}x_2^{7}x_3x_5^{12} +  x_1^{3}x_2^{7}x_3x_4^{12} +  x_1^{3}x_2^{7}x_3^{12}x_5 +  x_1^{3}x_2^{7}x_3^{12}x_4 \\ 
&\quad +  x_2x_3^{6}x_4^{11}x_5^{5} +  x_1x_3^{6}x_4^{11}x_5^{5} +  x_1x_2^{6}x_4^{11}x_5^{5} +  x_1x_2^{6}x_3^{11}x_5^{5} +  x_1x_2^{6}x_3^{11}x_4^{5} \\ 
&\quad +  x_2^{3}x_3^{3}x_4^{4}x_5^{13} +  x_2^{3}x_3^{3}x_4^{13}x_5^{4} +  x_2^{3}x_3^{4}x_4^{3}x_5^{13} +  x_1^{3}x_3^{3}x_4^{4}x_5^{13} +  x_1^{3}x_3^{3}x_4^{13}x_5^{4} \\ 
&\quad +  x_1^{3}x_3^{4}x_4^{3}x_5^{13} +  x_1^{3}x_2^{3}x_4^{4}x_5^{13} +  x_1^{3}x_2^{3}x_4^{13}x_5^{4} +  x_1^{3}x_2^{3}x_3^{4}x_5^{13} +  x_1^{3}x_2^{3}x_3^{4}x_4^{13} \\ 
&\quad +  x_1^{3}x_2^{3}x_3^{13}x_5^{4} +  x_1^{3}x_2^{3}x_3^{13}x_4^{4} +  x_1^{3}x_2^{4}x_4^{3}x_5^{13} +  x_1^{3}x_2^{4}x_3^{3}x_5^{13} +  x_1^{3}x_2^{4}x_3^{3}x_4^{13} \\ 
&\quad +  x_2^{3}x_3^{4}x_4^{11}x_5^{5} +  x_1^{3}x_3^{4}x_4^{11}x_5^{5} +  x_1^{3}x_2^{4}x_4^{11}x_5^{5} +  x_1^{3}x_2^{4}x_3^{11}x_5^{5} +  x_1^{3}x_2^{4}x_3^{11}x_4^{5} \\ 
&\quad + x_2^{3}x_3^{3}x_4^{5}x_5^{12} +  x_2^{3}x_3^{3}x_4^{12}x_5^{5} +  x_1^{3}x_3^{3}x_4^{5}x_5^{12} +  x_1^{3}x_3^{3}x_4^{12}x_5^{5} +  x_1^{3}x_2^{3}x_4^{5}x_5^{12} \\ 
&\quad +  x_1^{3}x_2^{3}x_4^{12}x_5^{5} +  x_1^{3}x_2^{3}x_3^{5}x_5^{12} +  x_1^{3}x_2^{3}x_3^{5}x_4^{12} +  x_1^{3}x_2^{3}x_3^{12}x_5^{5} +  x_1^{3}x_2^{3}x_3^{12}x_4^{5}.
\end{align*}

\medskip\noindent
{\bf Acknowledgment.} I would like to express my warmest thanks to my adviser, Asso. Prof. Nguyen Sum, for his inspiring guidance and generous help in finding the proofs as good as in describing the results.

I would like to thank the University of Technology and Education H\`{\^o} Ch\'i Minh city for supporting this work.

{}

\vskip0.2cm

Faculty of Foundation Sciences, 

University of Technology and Education H\`\ocircumflex\  Ch\'i Minh city,

01 V\~o V\u an Ng\^an, Th\h u \DJ\'\uhorn c district, H\`\ocircumflex\ Ch\'i Minh city, Viet Nam.

Email:  tinnk@hcmute.edu.vn

\end{document}